\newcommand{\bbeta}{\bm{\beta}}
\newcommand{\btheta}{\bm{\theta}}
\newcommand{\dkl}{\mathrm{D_{KL} }}
\newcommand{\R}{\mathbb{R}}
\newcommand{\E}{\mathbb{E}}
\renewcommand{\P}{\mathbb{P}}
\title{Variational Inference in high-dimensional linear regression}
\author{Sumit Mukherjee\thanks{Department of Statistics, Columbia University, New York, NY 10027, U.S.A. \tiny{sm3949@columbia.edu}} \and 
  Subhabrata Sen\thanks{Department of Statistics, Harvard
    University, Cambridge, MA 02138, U.S.A. \tiny{subhabratasen@fas.harvard.edu}} }
\theoremstyle{plain}\newtheorem{lemma}{\textbf{Lemma}}\newtheorem{theorem}{\textbf{Theorem}}\newtheorem{corollary}{\textbf{Corollary}}\newtheorem{definition}{\textbf{Definition}}
\theoremstyle{definition}
\theoremstyle{definition}\newtheorem{remark}{\textbf{Remark}}
\renewenvironment{proof}[1][\proofname] {
	\par\pushQED{\qed}\normalfont
	\topsep6\p@\@plus6\p@\relax
	\trivlist\item[\hskip\labelsep\bfseries#1\@addpunct{:}]
 	\ignorespaces
} {
	\popQED\endtrivlist\@endpefalse
}
\begin{document} 

\maketitle

\abstract{We study high-dimensional Bayesian linear regression with product priors. Using the nascent theory of \emph{non-linear large deviations} \cite{chatterjee2016nonlinear}, we derive sufficient conditions for the leading-order correctness of the naive mean-field approximation to the log-normalizing constant of the posterior distribution. Subsequently, assuming a true  linear model for the observed data, we derive a limiting infinite dimensional variational formula for the log normalizing constant of the posterior. Furthermore, we establish that under an additional ``separation" condition, the variational problem has a unique optimizer, and this optimizer governs the probabilistic properties of the posterior distribution. We provide intuitive sufficient conditions for the validity of this ``separation" condition. Finally, we illustrate our results on concrete examples with specific design matrices. 
}

\section{Introduction}
In this age of big-data, statisticians routinely analyze large, high-dimensional datasets arising from applications in genomics, finance, public policy etc., with the goal of discovering relationships between the response variable, and the observed features. The linear regression model 
\begin{align*}
\mathbf{y} = \mathbf{X} \bbeta + \varepsilon
\end{align*}
is arguably the most common framework for this task when the response is continuous. Under a Bayesian formalism, the statistician posits a prior distribution $\pi$ for the coefficient vector $\bbeta$, and constructs the corresponding posterior. Subsequent inference is based solely on this posterior distribution. Two questions are naturally relevant in this setting:
\begin{enumerate}
\item What are the statistical properties of Bayesian procedures, particularly in high-dimensions? 
\item Are these procedures computationally tractable for large datasets with high-dimensional features? 
\end{enumerate}

The theoretical performance of high-dimensional Bayesian methods have been examined extensively in recent times.  
In Bayesian asymptotic theory, given data $(y_i, \mathbf{x}_i)_{i=1}^{n}$, $x_i \in \mathbb{R}^{p}$, one assumes the correctness of a frequentist model $y_i = \mathbf{x}_i^{\mathrm{T}}\bbeta_0+ \varepsilon$, and studies frequentist inference of the coefficient vector $\beta_0$ under Bayesian procedures. \cite{ghosal1999asymptotic} established a version of the traditional Bernstein-Von-Mises theorem as long as $p^4\log p/n \to 0$. A lot of  recent attention has been focused on the high-dimensional regime $p\gg n$ with an additional assumption on the sparsity of $\bbeta_0$. In this context, spike-and-slab based approaches  \cite{mitchell1988bayesian,ishwaran2005spike} have been established to exhibit optimal frequentist properties \cite{castillo2015bayesian}. In particular, these posterior distributions ``contract" to the underlying truth $\bbeta_0$ at the minimax optimal estimation rate (we refer the interested reader to \cite{banerjee2021bayesian} for a formal definition of posterior contraction, and a survey of recent breakthroughs in this area). Despite the superior theoretical properties of this approach, the sharp spike-and-slab based approaches suffer from a computational bottleneck. In the special case of linear regression, continuous shrinkage priors (see e.g. \cite{carvalho2010horseshoe}) provide a comparatively more tractable alternative from the computational perspective\cite{bhattacharya2016fast}. Contraction properties of the corresponding posteriors were characterized in \cite{song2017nearly}. Unfortunately, this strategy is specific to the linear model, and does not generalize beyond this setting.

In general, computationally tractable Bayesian inference for high-dimensional models presents significant challenges. MCMC based strategies have been explored extensively for this purpose. Despite rapid progress in MCMC methodology and supporting theory, these methods are still slower than competing frequentist methodology e.g. those based on convex optimization. Variational methods \cite{wainwright2008graphical} provide an attractive general option to the Bayesian statistician. The simplest form of variational inference approximates the true posterior distribution using a product distribution---this version is often referred to as \emph{naive mean-field Variational Bayes} (nVB). Computing the best approximating product distribution is computationally fast, and thus these methods provide a practical option for modern datasets. We refer the interested reader to \cite{blei2017variational} for an introduction to variational Bayes methods in statistics and machine learning.

Although variational methods provide a computationally feasible strategy, supporting theoretical evidence has been relatively scarce.  In  \cite{wang2006convergence,wang2019frequentist}, the authors established the correctness of this approach in parametric models in the classical fixed $p$ setting. Subsequently \cite{wang2019variational} study variational Bayes in misspecified models. In the context of the linear model, early work by \cite{neville2014mean,ormerod2017variational} focused on variational inference in the low dimensional linear model,  while \cite{carbonetto2012scalable} provides an early variational approximation for variable selection.

The past two years have witnessed rapid progress in the analysis of variational methods for high-dimensional models \cite{alquier2016properties,alquier2020concentration,han2019statistical,ray2021variational,ray2020spike,yang2020alpha}. These results focus on high-dimensional models with a sparse underlying truth $\bbeta_0$, and study the contraction properties of the variational posterior. In particular, they derive sufficient conditions for the variational posterior to contract at the minimax optimal rate. Correctness of variational methods have also been established for community detection \cite{bickel2013asymptotic,zhang2020theoretical}, a poisson mixed model  \cite{hall2011asymptotic}, frequentist models \cite{westling2019beyond} and mixture models \cite{cherief2018consistency}.  

In sharp contrast, nVB methods can fail in truly high-dimensional settings, e.g. in versions of topic modeling \cite{ghorbani2019instability}. In this specific situation, the correct variational approximation is provided by the TAP approximation from spin-glass theory \cite{mezard1987spin}. \cite{fan2018tap} establishes rigorous guarantees regarding the validity of the TAP approach in the context of the simpler $\mathbb{Z}_2$ synchronization problem. 

There is thus an immediate need to understand general properties of statistical problems which ensure the correctness of the nVB approximation. In this paper, we study the accuracy of this approximation in Bayesian linear regression with product priors. We provide easily verifiable conditions which ensure the asymptotic accuracy of the nVB approximation. Further, we illustrate that the approximation can yield detailed information regarding the statistical properties of this model, which might be unavailable using other techniques. We elaborate on our specific contributions below.

\subsection{Contributions} 
Our main contributions in this paper are as follows: 
\begin{itemize}
\item[(i)] In Theorem \ref{thm:main}, we provide sufficient conditions for the asymptotic tightness of the nVB approximation. The conditions are easily verifiable, and can be explicitly checked in specific applications. This provides rigorous theoretical support for widely used mean-field approximation based methodology.  

\item[(ii)] Assuming a true frequentist linear model for the data, we derive a limiting variational formula for the log normalizing constant in Theorem \ref{thm:var_conv_new}. We emphasize that in contrast to existing results, we do not assume  any sparsity on the true regression coefficients. We refer the interested reader to  \cite{lee2020continuous} for a motivating discussion on the importance of such scenarios in scientific applications. 

\item[(iii)] Under an additional ``separation" condition \eqref{eq:separate}, we establish that the limiting variational problem has a unique optimizer (see Theorem \ref{thm:pure_state} for a precise statement).  Further, this optimizer governs the probabilistic properties of the posterior distribution (Corollary \ref{cor:eg}).  We also provide interpretable sufficient conditions which enforce this separation condition (Lemma \ref{lemma:pure_state}). 

We emphasize that in existing analyses of high-dimensional Bayesian linear regression, properties of the posterior are directly established \cite{castillo2015bayesian}, and the variational posterior is analyzed independently \cite{ray2021variational}. Our approach is inherently different---we first establish the correctness of the mean-field approximation, and then study the posterior through the lens of the mean-field approximation formula. 

\item[(iv)] We further illustrate our general results by applying them to three specific examples---a two factor ANOVA model, a gaussian design setting with spiked covariance, and a sparse bernoulli design. In each case, we identify the specific limiting functional which determines the limiting log normalizing constant.

 
\item[(v)] From a theoretical perspective, our results crucially utilize recent advances in the theory of \emph{non-linear large deviations}. Initiated in the seminal paper \cite{chatterjee2016nonlinear}, the theory of non-linear large deviations was originally conceived to answer some deep questions concerning large deviations of sub-graph counts in sparse random graphs. In \cite{basakmukherjee}, one of the authors successfully utilized this framework to establish the tightness of the naive mean field approximation for the log-partition function in a family of Potts models. To the best of our knowledge, these developments have not been utilized previously for other statistical models. In this paper, we demonstrate the usefulness of these tools in the context of high dimensional statistics; we hope that this spurs an in-depth study of their applicability for other high dimensional problems. We consider this to be a key conceptual contribution of this paper. 
\end{itemize}



\subsection{Non-linear large deviations and related results} 
In a breakthrough paper, Chatterjee and Dembo \cite{chatterjee2016nonlinear} introduced the theory of \emph{non-linear large deviations} with the goal of studying large deviations for non-linear functions of bernoulli random variables. As an application of this general machinery, they characterized sharp deviation probabilities for sub-graph counts in sparse Erd\H{o}s-R\'{e}nyi random graphs. Subsequent extensions by Eldan \cite{eldan2018gaussian} and Augeri \cite{augeri2019transportation} allow one to track a wider regime of sparsity for the binary variables. In a different direction, Yan \cite{yan} extended the Chatterjee-Dembo framework to general bounded Banach-space valued variables. See also \cite{austin2019structure} for related decompositions of general Gibbs measures using information theoretic ideas. These results have galvanized the study of large deviations for sub-graph counts on sparse random graphs, and the past three years have witnessed rapid progress in this direction. We refer the interested reader to \cite{cook2021regularity} and references therein for a survey of recent progress in this area. 

At the heart of the Chatterjee-Dembo framework lies a tight approximation bound for the log-normalizing constant for general Gibbs type distributions in terms of the naive mean-field approximation formula. This framework was utilized by one of the authors \cite{basakmukherjee} to derive asymptotic limits for the log normalizing constant of Potts models on several sequences of graphs. Similar results were independently derived by \cite{jain2018mean,jain2019mean} using different techniques. 

In this paper, we study Bayesian linear regression through the lens of non-linear large deviations, and uncover precise statistical properties of these models by analyzing the mean-field variational problem.

\subsection{Setup} We observe $\{ (y_i, x_i): 1 \leq i \leq n\}$, $y_i \in \mathbb{R}$, $x_i \in \mathbb{R}^p$. We set $\mathbf{y} = (y_i) \in \mathbb{R}^n$ and $\mathbf{X}^{\mathrm{T}} = [x_1, \cdots, x_n]$.  Throughout, we work in an asymptotic setting where $p$ and $n=n(p) $ are both going to $\infty$. A natural Bayesian model for such a dataset assumes 
\begin{align}
&\beta_1, \cdots , \beta_p \sim^{iid} \pi, \,\,\,\, \bbeta^{\mathrm{T}} = (\beta_1, \cdots, \beta_p),\,\,\,\,\ \mathbf{y} = \mathbf{X} \bbeta + \mathbf{\varepsilon}, \,\,\, \varepsilon \sim \mathcal{N}(0, \sigma^2 I_n). \label{eq:model} 
\end{align}
In the display above, $\pi$ is a  probability distribution supported on $[-1,1]$, and we consider an iid prior on the regression coefficients. For our subsequent discussion, the precise interval $[-1,1]$ for the prior support is not crucial--- our arguments  go through unchanged, as long as the prior has bounded support. Note that in the context of Bayesian inference for linear regression, the regression parameters are often drawn from a parametric family, and the parameters specifying the prior are, in turn, sampled from a hyper-prior. In our discussions, we will restrict ourselves to the simpler setting where the prior $\pi$ is fixed---however, we do not make any assumptions on $\pi$ apart from the bounded support assumption. Throughout, we assume that the noise variance $\sigma^2>0$ is fixed and known to the statistician. 

Given the Bayesian model \eqref{eq:model}, one naturally constructs the posterior distribution 
\begin{align}
\frac{\mathrm{d} \mu_{\mathbf{y}, \mathbf{X}}}{\mathrm{d}\pi^{\otimes p}}(\bbeta) \propto \exp\Big( -\frac{1}{2\sigma^2} \| \mathbf{y} - \mathbf{X} \bbeta \|_2^2 \Big) \propto \exp \Big( - \frac{1}{2\sigma^2} \Big[ \bbeta^{\mathrm{T}} A_p\bbeta +\bbeta^{\mathrm{T}} D_p \bbeta -2 \mathbf{z}^{\mathrm{T}} \bbeta\Big] \Big),  \nonumber 
\end{align}
where $\mathbf{z} = \mathbf{X}^{\mathrm{T}} \mathbf{y}$, and $D_p,A_p$ are the diagonal and off-diagonal matrices obtained from $ \mathbf{X}^{\mathrm{T}} \mathbf{X} $. More precisely,
\begin{eqnarray*}
&A_p(i,i):=0, \quad &D_p(i,i):=\Big(\mathbf{X}^{\mathrm{T}} \mathbf{X}\Big)_{ii},\\
&A_p(i,j):=\Big(\mathbf{X}^{\mathrm{T}} \mathbf{X}\Big)_{ij},\quad &D_p(i,j):=0.
\end{eqnarray*} 
Since the term $\bbeta^{\mathrm{T}} D\bbeta$ is additive in the components of $\bbeta$, we can absorb the term $\bbeta^{\mathrm{T}}D\bbeta$ in the base measure $\pi^{\otimes p}$, via the following definition:  
\begin{definition}[Exponential Family]
\label{def:exp}
For any $\gamma:=(\gamma_1,\gamma_2)\in \mathbb{R}^2$ define a probability measure $\pi_{\gamma}$ on $[-1,1]$ as 
\[\frac{\mathrm{d}\pi_\gamma}{\mathrm{d}\pi}(z):= \exp\left(\gamma_1 z-\frac{\gamma_2}{2}z^2-c(\gamma)\right),\quad c(\gamma):=\log\int_{[-1,1]} \exp\left(\gamma_1 z - \frac{\gamma_2}{2}z^2\right) \mathrm{d}\pi(z).\]
\end{definition}
Using this definition we can write the posterior distribution $\mu$ as 
\begin{align*}
 \mu_{\mathbf{y}, \mathbf{X}} (\mathrm{d} \bbeta) \propto  \exp \Big( - \frac{1}{2\sigma^2} \Big[ \bbeta^{\mathrm{T}} A \bbeta -2 \mathbf{z}^{\mathrm{T}} \bbeta\Big] \Big) \prod_{i=1}^{p} \pi_i( \mathrm{d}\beta_i),\quad \text{where } \pi_i:=\pi_{(0,d_i)}, \,\, d_i := \frac{D_{ii}}{\sigma^2}. 
\end{align*}
A central object in the theoretical study of these posterior distributions is the normalizing constant, also referred to as the ``partition function" in statistical physics parlance. Formally, we define 
\begin{align}
Z_{p}(\mathbf{y}, \mathbf{X}) = \int_{[-1,1]^{p}} \exp\Big( - \frac{1}{2\sigma^2} \Big[ \bbeta^{\mathrm{T}} A \bbeta - 2 \mathbf{z}^{\mathrm{T}} \bbeta\Big] \Big) \prod_{i=1}^p\pi_i( \mathrm{d}\beta_i). \label{eq:normalizing}
\end{align}
The partition function is intractable for most priors, unless special conjugacy properties are satisfied between the prior and the likelihood. Henceforth, we suppress the dependence of $\mu$, $Z_p$  on $\mathbf{y}$, $\mathbf{X}$ whenever it is clear from the context.  The classical Gibbs variational principle characterizes the partition function as 
\begin{align}
\log Z_p = \sup_{Q} \Big(  \mathbb{E}_{Q}\Big[ - \frac{1}{2\sigma^2} \Big[ \bbeta^{\mathrm{T}} A \bbeta - 2 \mathbf{z}^{\mathrm{T}} \bbeta\Big] \Big] - \dkl(Q \| \prod_{i=1}^p\pi_i) \Big), \nonumber 
\end{align}
where the supremum ranges over probability distributions $Q$ on $[-1,1]^{p}$ (see e.g. \cite{wainwright2008graphical}). 
In fact, the supremum in the above variational problem is attained by $Q= \mu$. 
The naive mean-field approximation to $Z_p$ restricts the supremum to product distributions, and thus obtains a universal lower bound. Formally, we have, 
\begin{align}
\log Z_p \geq \sup_{Q = \prod_{i=1}^{p} Q_i}  \Big(  \mathbb{E}_{Q}\Big[ - \frac{1}{2\sigma^2} \Big[ \bbeta^{\mathrm{T}} A \bbeta - 2 \mathbf{z}^{\mathrm{T}} \bbeta\Big] \Big] - \dkl(Q \| \prod_{i=1}^p\pi_i) \Big). \nonumber 
\end{align}
The optimizer in the display above provides the best approximation to $\mu$ among product distributions under KL divergence. This paper focuses on the tightness of the naive mean-field lower bound in the context of linear regression. 
For an in-depth survey of variational inference, we refer the interested reader to \cite{bishop2006pattern,blei2017variational,wainwright2008graphical}.

%
%
%

 
 \noindent
 \textbf{Outline:} The rest of the paper is structured as follows. We collect our results in Section \ref{sec:results}. We discuss some directions for future enquiry in Section \ref{sec:discussions}. The main results are established in Sections \ref{sec:proofs}. We defer some proofs to the Appendix. 
 
 \noindent
 \textbf{Acknowledgments:} The authors thank Pragya Sur for discussions on high-dimensional regression. SM gratefully thanks NSF (DMS 1712037) for support during this research. 

\section{Results} 
\label{sec:results} 
We collect our main results in this section. To this end, we first discuss some elementary facts regarding exponential families. 
\noindent 
The next result collects some analytic properties of the cumulant generating function $c(\cdot)$, which will be relevant for our subsequent discussion. For the sake of completeness, we provide a proof in Section \ref{sec:proofs}. 
\begin{lemma}
\label{lemma:properties} 
Let $c(\cdot):\mathbb{R}^2 \mapsto (-1,1)$ be as in Definition \ref{def:exp}. Assume that both $\{-1,1\}$ belong to the support of $\pi$. Then the following conclusions hold. 

\begin{enumerate}
\item[(i)] $\dot{c}(\gamma_1,\gamma_2):=\frac{\partial c(\gamma_1,\gamma_2)}{\partial \gamma_1}$ is strictly increasing in $\gamma_1$, with $\lim_{\gamma_1\rightarrow\pm \infty}\dot{c}(\gamma_1,\gamma_2)=\pm 1$ for every $\gamma_2\in \mathbb{R}$.

\item[(ii)]
 For any $t\in (-1,1)$, there exists a unique $h(t,\gamma_2)$ such that $\dot{c}(h(t,\gamma_2), \gamma_2)=t$. Further, $\lim_{t\rightarrow\pm1}h(t,\gamma_2)=\pm \infty$ for every $\gamma_2\in \mathbb{R}$.
 \end{enumerate}
\end{lemma}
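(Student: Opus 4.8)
The plan is to exploit the standard fact that $c(\cdot,\gamma_2)$ is the cumulant generating function of the tilted family, so its $\gamma_1$-derivatives are the mean and variance of $Z$ under $\pi_{(\gamma_1,\gamma_2)}$. Concretely, I would fix $\gamma_2$ and differentiate under the integral sign (justified since $\pi$ is supported on the compact set $[-1,1]$, so all exponential moments are finite and dominated convergence applies). This gives $\dot c(\gamma_1,\gamma_2)=\E_{\pi_{(\gamma_1,\gamma_2)}}[Z]$ and $\ddot c(\gamma_1,\gamma_2)=\mathrm{Var}_{\pi_{(\gamma_1,\gamma_2)}}[Z]\ge 0$, with strict inequality unless $\pi_{(\gamma_1,\gamma_2)}$ — equivalently $\pi$ — is a point mass. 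Since $\{-1,1\}$ both lie in the support of $\pi$, $\pi$ is not a point mass, so $\ddot c>0$ and hence $\dot c(\cdot,\gamma_2)$ is strictly increasing. This settles the monotonicity claim in (i).

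Next, for the limits in (i), I would analyze $\dot c(\gamma_1,\gamma_2)=\int z\, e^{\gamma_1 z-\gamma_2 z^2/2}\,\mathrm{d}\pi(z)\big/\int e^{\gamma_1 z-\gamma_2 z^2/2}\,\mathrm{d}\pi(z)$ as $\gamma_1\to+\infty$. The tilted measure $\pi_{(\gamma_1,\gamma_2)}$ concentrates near the top of the support of $\pi$, which is $1$ because $1$ is in the support: for any $\delta>0$, $\pi([1-\delta,1])>0$, and the ratio of the mass $e^{\gamma_1 z-\gamma_2 z^2/2}$ on $[1-\delta,1]$ to that on $[-1,1-\delta]$ diverges as $\gamma_1\to+\infty$ (the exponent gains at least order $\gamma_1\delta$ minus a bounded $\gamma_2$-term). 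Hence $\liminf_{\gamma_1\to\infty}\dot c\ge 1-\delta$ for every $\delta$, and since $\dot c\le 1$ trivially, $\dot c(\gamma_1,\gamma_2)\to 1$; the case $\gamma_1\to-\infty$ is symmetric using that $-1$ is in the support. Since $\dot c(\cdot,\gamma_2)$ is continuous (again by dominated convergence), strictly increasing, and has limits $\pm 1$, it is a bijection from $\R$ onto $(-1,1)$ for each fixed $\gamma_2$. This immediately yields (ii): for $t\in(-1,1)$ there is a unique $h(t,\gamma_2)$ with $\dot c(h(t,\gamma_2),\gamma_2)=t$, and since $h(\cdot,\gamma_2)$ is the inverse of a strictly increasing function mapping $\R\to(-1,1)$, we get $h(t,\gamma_2)\to\pm\infty$ as $t\to\pm 1$.

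The only genuinely delicate point is making the concentration/limit argument for $\dot c$ fully rigorous, i.e. quantifying that the tilted mass escapes to the endpoint of $\mathrm{supp}(\pi)$; everything else is routine differentiation-under-the-integral and compactness. I would handle this by the explicit two-region bound sketched above: split the integrals defining $\dot c$ at $1-\delta$, bound the numerator contribution from $[-1,1-\delta]$ by $(1-\delta)$ times its mass plus a crude tail estimate, and note $\int_{[1-\delta,1]}e^{\gamma_1 z-\gamma_2 z^2/2}\,\mathrm{d}\pi(z)\ge \pi([1-\delta,1])\,e^{\gamma_1(1-\delta)-\gamma_2/2\cdot\max(1,\cdot)}$, so that the weight on $[-1,1-\delta]$ becomes negligible relative to this as $\gamma_1\to\infty$. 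Assembling these estimates gives the claimed limit.
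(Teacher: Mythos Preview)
Your proof is correct and follows the same outline as the paper's: both identify $\dot c$ and $\ddot c$ as the mean and variance under the tilted measure, establish strict monotonicity via $\ddot c>0$ (since $\pi$ is not a point mass), and prove the limits in (i) by a two-region splitting showing that $\pi_{(\gamma_1,\gamma_2)}$ concentrates at the relevant endpoint of the support. The paper packages this last step as weak convergence $\pi_{(\gamma_1,\gamma_2)}\Rightarrow\delta_{\pm 1}$ in Lemma~\ref{lemma:elementary}(iii) and then invokes dominated convergence, but the underlying estimate is exactly your ratio bound.

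The one noteworthy difference is in the limit claim of (ii). You deduce $h(t,\gamma_2)\to\pm\infty$ directly from the fact that $h(\cdot,\gamma_2)$ is the inverse of a continuous strictly increasing bijection $\R\to(-1,1)$, which is a one-line argument. The paper instead argues by contradiction: assuming $M:=\lim_{t\to 1}h(t,\gamma_2)<\infty$, it splits $t=\int z\,\mathrm{d}\pi_{(h(t,\gamma_2),\gamma_2)}$ over $[-1,0]$ and $(0,1]$ to get $t\le \pi_{(h(t,\gamma_2),\gamma_2)}((0,1])$, and then shows $\pi_{(h(t,\gamma_2),\gamma_2)}([-1,0])$ is bounded below by a positive constant (since $h$ stays bounded by $M$ and $-1\in\mathrm{supp}(\pi)$), contradicting $t\to 1$. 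Your inverse-bijection argument is more economical and bypasses this computation entirely; the paper's route, while longer, makes the role of the support hypothesis on $\pi$ a bit more explicit at this step.
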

\noindent 

\noindent
Armed with these basic facts, we can formally state the naive-mean field approximation to the log-normalizing constant \eqref{eq:normalizing}. 
\begin{definition}\label{def:G}
Define a possibly extended real valued function $G$ on $[-1,1]\times \mathbb{R}$ by setting
\begin{align*}
G(u,d):=&u h(u,d) -c(h(u,d),d) + c(0, d)&\text{ if }u\in (-1,1), d\in \mathbb{R},\\
:=&\dkl(\pi_\infty\| \pi_{(0,d)} )&\text{ if }u=1, d\in \mathbb{R},\\
:=&\dkl(\pi_{-\infty}\| \pi_{(0,d)} )&\text{ if }u=-1, d\in \mathbb{R},
\end{align*}
where $\pi_{\infty}$ and $\pi_{-\infty}$ are degenerate distributions which puts mass 1 at $1$ and $-1$ respectively.
\end{definition}

\noindent
We will need the following facts about the derivatives of $G$. We defer the proof of Lemma \ref{lemma:G_stability} to the Appendix. 

\begin{lemma}
\label{lemma:G_stability}
We have, for $u \in (-1,1)$ and $d \in \mathbb{R}$, 
\begin{align}
\frac{\partial G}{\partial u}(u,d) = h(u,d),&\,\,\, \frac{\partial G}{\partial d} = \frac{1}{2} \int_{-1}^{1} z^2 \mathrm{d} \pi_{(h(u,d),d)}(z) - \frac{1}{2} \int_{-1}^{1} z^2 \mathrm{d} \pi_{(0,d)}(z)  . \nonumber \\
\frac{\partial^2 G}{\partial^2 u}(u,d) &= \frac{1}{\ddot{c}(h(u,d),d)} >0.  \nonumber 
\end{align} 
Consequently, we have, $\sup_{u \in (-1,1), d \in \mathbb{R}} |\frac{\partial G}{\partial d} (u,d)| \leq \frac{1}{2}$. 
\end{lemma}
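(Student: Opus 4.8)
The statement to prove is Lemma \ref{lemma:G_stability}, which asserts three derivative formulas for $G$ on the open region $u\in(-1,1)$, $d\in\mathbb{R}$, plus the bound $\sup|\partial G/\partial d|\le 1/2$. Let me plan a proof.

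\medskip

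The plan is to differentiate the explicit formula $G(u,d) = u\,h(u,d) - c(h(u,d),d) + c(0,d)$ directly, using the chain rule and the defining relation $\dot c(h(u,d),d) = u$ from Lemma \ref{lemma:properties}(ii). First I would record the basic facts about the cumulant generating function: $\dot c(\gamma_1,\gamma_2) = \partial_{\gamma_1} c = \int z\,\mathrm{d}\pi_\gamma(z)$, and $\ddot c(\gamma_1,\gamma_2) = \partial^2_{\gamma_1} c = \mathrm{Var}_{\pi_\gamma}(z) > 0$ (strict positivity because $\pi$, having $\{-1,1\}$ in its support, is non-degenerate, so $\pi_\gamma$ is non-degenerate for all finite $\gamma$). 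Also $\partial_{\gamma_2} c(\gamma_1,\gamma_2) = -\tfrac12\int z^2\,\mathrm{d}\pi_\gamma(z)$, directly from Definition \ref{def:exp}. Since $\dot c$ is strictly increasing in $\gamma_1$ (Lemma \ref{lemma:properties}(i)) with $\ddot c > 0$, the implicit function theorem gives that $h(u,d)$ is differentiable with $\partial_u h = 1/\ddot c(h(u,d),d)$ and $\partial_d h = -\,\partial_{\gamma_1}\partial_{\gamma_2} c\big/\ddot c$ evaluated at $(h(u,d),d)$.

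\medskip

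Next I would carry out the two first-order computations. For $\partial G/\partial u$: differentiating, $\partial_u G = h + u\,\partial_u h - \dot c(h,d)\,\partial_u h = h + (u - \dot c(h,d))\,\partial_u h = h$, since $u - \dot c(h,d) = 0$ by definition of $h$. For $\partial G/\partial d$: differentiating, $\partial_d G = u\,\partial_d h - \dot c(h,d)\,\partial_d h - \partial_{\gamma_2}c(h,d) + \partial_{\gamma_2}c(0,d)$; again the first two terms cancel because $u = \dot c(h,d)$, leaving $\partial_d G = -\partial_{\gamma_2}c(h,d) + \partial_{\gamma_2}c(0,d) = \tfrac12\int z^2\,\mathrm{d}\pi_{(h(u,d),d)}(z) - \tfrac12\int z^2\,\mathrm{d}\pi_{(0,d)}(z)$, as claimed. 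The second derivative is then immediate: $\partial^2_u G = \partial_u h = 1/\ddot c(h(u,d),d) > 0$. Finally, the bound $\sup|\partial G/\partial d|\le \tfrac12$ follows since both $\int z^2\,\mathrm{d}\pi_{(h,d)}(z)$ and $\int z^2\,\mathrm{d}\pi_{(0,d)}(z)$ lie in $[0,1]$ (the measures are supported on $[-1,1]$), so their difference, scaled by $\tfrac12$, is bounded in absolute value by $\tfrac12$.

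\medskip

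I anticipate the arguments are essentially routine once the setup is in place; the only genuine care needed is justifying differentiation under the integral sign (to get the formulas for $\dot c$, $\ddot c$, $\partial_{\gamma_2} c$) and the smoothness of $h$ via the implicit function theorem. Differentiation under the integral is legitimate here because the integrand $\exp(\gamma_1 z - \tfrac{\gamma_2}{2}z^2)$ and its partial derivatives in $\gamma$ are bounded uniformly on $[-1,1]$ by a constant depending continuously on $\gamma$, and $\pi$ is a probability measure on $[-1,1]$; hence $c$ is $C^\infty$ on $\mathbb{R}^2$. The implicit function theorem applies since $\partial_{\gamma_1}\dot c = \ddot c > 0$ everywhere, so the main "obstacle" is merely bookkeeping rather than a conceptual difficulty. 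One point to state cleanly is that all evaluations of $\pi_\gamma$-integrals and of $h$ stay finite precisely because we restrict to $u\in(-1,1)$ (so $h(u,d)$ is finite) and $d\in\mathbb{R}$; the boundary cases $u=\pm1$ are excluded from the statement and need not be treated.
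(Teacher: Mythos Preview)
Your proposal is correct and follows essentially the same approach as the paper: direct differentiation of $G(u,d)=u\,h(u,d)-c(h(u,d),d)+c(0,d)$ via the chain rule, using the defining relation $\dot c(h(u,d),d)=u$ to cancel the terms involving $\partial h$, then $\partial_u^2 G=\partial_u h=1/\ddot c$ from the implicit function theorem, and the final bound from $\int z^2\,\mathrm{d}\pi_\gamma\in[0,1]$. Your added remarks on justifying differentiation under the integral and smoothness of $h$ are more explicit than the paper's ``direct computation,'' but the argument is the same.
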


\begin{definition}\label{def:Mp} 
Define $M_p: [-1,1]^p \to \mathbb{R}$ as 
 \begin{align}
 M_p({\bf u}) :=\Big\{ - \frac{1}{2\sigma^2} \Big[ {\bf u}^{\mathrm{T}} A {\bf u}- 2 \mathbf{z}^{\mathrm{T}} {\bf u}\Big]-\sum_{i=1}^pG(u_i,d_i)\Big\}. \label{eq:Mp_defn} 
 \end{align} 
\end{definition} 

\begin{lemma} 
\label{lemma:naive_approx}
With $M_p(\cdot)$ as in \eqref{eq:Mp_defn}, we have, 
\begin{align} 
\sup_{Q = \prod_{i=1}^{p} Q_i}  \Big(  \mathbb{E}_{Q}\Big[ - \frac{1}{2\sigma^2} \Big[ \bbeta^{\mathrm{T}} A \bbeta - 2 \mathbf{z}^{\mathrm{T}} \bbeta\Big] \Big] - \dkl(Q \| \prod_{i=1}^p\pi_i) \Big) = \sup_{{\bf u}\in [-1,1]^p}M_p({\bf u}) \label{eq:variational_prob}.
\end{align}
\end{lemma}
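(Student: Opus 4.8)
The plan is to evaluate the naive mean-field supremum in two stages: first reduce the optimization over product laws to an optimization over the vector of coordinate means, and then identify each per-coordinate contribution with $G$ through a classical information-projection (Gibbs) computation.

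First I would fix a product law $Q=\prod_{i=1}^p Q_i$ on $[-1,1]^p$ and set $u_i:=\mathbb{E}_{Q_i}[\beta_i]\in[-1,1]$. Since $A$ has vanishing diagonal, under a product measure $\mathbb{E}_Q[\bbeta^{\mathrm{T}}A\bbeta]=\sum_{i\neq j}A_{ij}u_iu_j=\mathbf{u}^{\mathrm{T}}A\mathbf{u}$, while $\mathbb{E}_Q[\mathbf{z}^{\mathrm{T}}\bbeta]=\mathbf{z}^{\mathrm{T}}\mathbf{u}$ and $\dkl(Q\|\prod_i\pi_i)=\sum_i\dkl(Q_i\|\pi_i)$ by additivity of KL for product measures. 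Hence the mean-field objective at $Q$ equals $-\frac{1}{2\sigma^2}\big[\mathbf{u}^{\mathrm{T}}A\mathbf{u}-2\mathbf{z}^{\mathrm{T}}\mathbf{u}\big]-\sum_i\dkl(Q_i\|\pi_i)$, depending on $Q$ only through $\mathbf{u}$ and the individual divergences. Taking the supremum over product $Q$ and separating the coordinates gives
\[
\sup_{Q=\prod Q_i}(\cdots)=\sup_{\mathbf{u}\in[-1,1]^p}\Big\{-\frac{1}{2\sigma^2}\big[\mathbf{u}^{\mathrm{T}}A\mathbf{u}-2\mathbf{z}^{\mathrm{T}}\mathbf{u}\big]-\sum_{i=1}^p\inf_{Q_i:\,\mathbb{E}_{Q_i}[\beta_i]=u_i}\dkl(Q_i\|\pi_i)\Big\},
\]
so it suffices to show that for each $i$, $\inf\{\dkl(Q\|\pi_i):Q\text{ on }[-1,1],\ \int z\,\mathrm{d}Q=u_i\}=G(u_i,d_i)$.

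For the one-dimensional claim, recall $\pi_i=\pi_{(0,d_i)}$. When $u\in(-1,1)$, Lemma \ref{lemma:properties}(ii) gives $h:=h(u,d_i)$ with $\dot{c}(h,d_i)=u$, i.e. the tilt $\pi_{(h,d_i)}$ has mean $u$. From Definition \ref{def:exp}, $\log\frac{\mathrm{d}\pi_{(h,d_i)}}{\mathrm{d}\pi_{(0,d_i)}}(z)=hz-c(h,d_i)+c(0,d_i)$, and since $\pi_{(h,d_i)}$ and $\pi_{(0,d_i)}$ are mutually absolutely continuous, any $Q$ with mean $u$ and $Q\ll\pi_{(0,d_i)}$ satisfies the chain-rule identity
\[
\dkl(Q\|\pi_{(0,d_i)})=\dkl(Q\|\pi_{(h,d_i)})+\int\!\big(hz-c(h,d_i)+c(0,d_i)\big)\,\mathrm{d}Q(z)=\dkl(Q\|\pi_{(h,d_i)})+hu-c(h,d_i)+c(0,d_i).
\]
As $\dkl(Q\|\pi_{(h,d_i)})\geq0$ with equality iff $Q=\pi_{(h,d_i)}$, the infimum is $hu-c(h,d_i)+c(0,d_i)=G(u,d_i)$, attained at $Q=\pi_{(h,d_i)}$ (the case $\dkl(Q\|\pi_{(0,d_i)})=\infty$ is vacuous). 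For the boundary values $u=\pm1$, the only law on $[-1,1]$ with mean $\pm1$ is $\pi_{\pm\infty}$, so the infimum is $\dkl(\pi_{\pm\infty}\|\pi_{(0,d_i)})=G(\pm1,d_i)$, again matching Definition \ref{def:G}. Substituting these per-coordinate values into the displayed formula turns the right-hand side into $\sup_{\mathbf{u}\in[-1,1]^p}M_p(\mathbf{u})$, as claimed.

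I do not expect a genuine obstacle; the computation is routine. The only points needing mild care are that the separation of the outer supremum from the per-coordinate minimizations is just sup/inf algebra (no attainment is required for the reduction, and the minimizers we exhibit are explicit anyway), and that the chain-rule identity for KL is applied in the regime $Q\ll\pi_{(0,d_i)}$, which automatically gives $Q\ll\pi_{(h,d_i)}$ by mutual absolute continuity of the two tilts; the complementary case simply contributes $+\infty$ and is harmless. The identity $\dkl(\pi_{(h,d_i)}\|\pi_{(0,d_i)})=hu-c(h,d_i)+c(0,d_i)$ with $u=\dot{c}(h,d_i)$ is precisely the formula defining $G$ on the interior, so the final bookkeeping is immediate.
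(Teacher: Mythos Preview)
Your proof is correct and is precisely the self-contained elaboration of what the paper only cites: the paper's proof is the single sentence ``This result follows immediately from elementary facts about exponential families. We refer the interested reader to \cite[Section 5.3]{wainwright2008graphical},'' and your argument---reducing to coordinate means via $A_{ii}=0$, then identifying the per-coordinate information projection $\inf\{\dkl(Q\|\pi_i):\int z\,\mathrm{d}Q=u\}$ with $G(u,d_i)$ through the exponential tilt---is exactly that standard conjugate-duality computation. There is nothing to add.
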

\noindent 
This result follows immediately from elementary facts about exponential families. We refer the interested reader to \cite[Section 5.3]{wainwright2008graphical}.

\subsection{Validity of the naive mean-field approximation} 
 Throughout the paper, we use the usual Landau notation $O(\cdot)$ and $o(\cdot)$ for deterministic sequences dependent on $p$.
\begin{theorem}\label{thm:main}
Assume that the matrix $A_p$ satisfies the two conditions 
\begin{align}
\mathrm{tr}(A_p^2) &=o(p), \label{eq:mean_field}  \\
\sup_{{\bf u} \in [-1,1]^p}\sum_{i=1}^p\Big|\sum_{j=1}^p A_p(i,j) u_j \Big| &=O(p).\label{eq:weak}
\end{align}
\begin{enumerate}
\item[(i)] \label{thm:main_parta}
We have, setting $R_p:=\sup_{\mathbf{u} \in [-1, 1]^{p}} M_p ({\bf{u}})$, as $p \to \infty$,
\begin{align}\label{eq:mf_conclusion}
\log Z_p- R_p=o(p).\quad 
\end{align}

\item[(ii)]\label{thm:main_partb}
If $b_i:=\E_{\mu}(\beta_i|\beta_k,k\ne i)$, then the vector ${\bf b}:=(b_1,\ldots,b_p)$ satisfies
\begin{align*}
M_p({\bf b})-R_p= o(p).
\end{align*}

\item[(iii)] \label{thm:main_partc}
Suppose there exists $\hat{\bf u}\in [-1,1]^p$ which satisfies that for every $\eta>0$ we have
\begin{align}\label{eq:unique_mf}
\limsup_{p\to\infty}\frac{1}{p}\Big[\sup_{{\bf u}\in [-1,1]^p:\lVert {\bf u}- \hat{{\bf u}}\rVert_2^2\ge p\eta}M_p({\bf u})-R_p\Big]<0.
\end{align}
Assume further that the empirical measure $\frac{1}{p} \sum_i \delta_{D_p(i,i)}$ is uniformly integrable. 
Then, for any $\varepsilon>0$ and for any continuous function $\zeta:[-1,1]\times [0,1]\to\R$ we have
$$\mu_{\mathbf{y}, \mathbf{X}}\Big( \Big| \frac{1}{p}\sum_{i=1}^p\zeta\Big(\beta_i, \frac{i}{p}\Big)-\frac{1}{p}\sum_{i=1}^p \int_{[-1,1]}  \zeta\Big(z, \frac{i}{p}\Big) \pi_{(h(\hat{u}_i,d_i),d_i)}(dz) \Big| > \varepsilon  \Big) \to  0.$$
\end{enumerate}
\end{theorem}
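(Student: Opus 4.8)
\textbf{Proof proposal for Theorem \ref{thm:main}.}

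The plan is to bring the posterior $\mu_{\mathbf{y},\mathbf{X}}$ into the framework of non-linear large deviations of \cite{chatterjee2016nonlinear}, \cite{yan}, \cite{basakmukherjee}. The function whose exponential is being integrated is $f(\bbeta) = -\frac{1}{2\sigma^2}[\bbeta^{\mathrm T} A \bbeta - 2\mathbf z^{\mathrm T}\bbeta]$ against the product base measure $\prod_i \pi_i$; the key structural input is that the "interaction" $\bbeta^{\mathrm T} A\bbeta$ is a quadratic form with zero diagonal, whose Hessian is $A/\sigma^2$. The Gibbs variational principle gives $\log Z_p \ge R_p$ for free (Lemma \ref{lemma:naive_approx}), so the whole content of part (i) is the matching upper bound $\log Z_p \le R_p + o(p)$. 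For this I would invoke the abstract non-linear large deviation upper bound: if the gradient $\nabla f$ can be well-approximated (in a suitable covering-number / low-complexity sense) by a low-dimensional object, then $\log Z_p$ is within $o(p)$ of the mean-field bound. Here $\nabla f(\bbeta)_i = -\frac{1}{\sigma^2}(\sum_j A(i,j)\beta_j - z_i)$; condition \eqref{eq:mean_field}, $\mathrm{tr}(A_p^2) = o(p)$, is precisely what controls the "Frobenius norm of the Hessian", i.e. the statement that $f$ is close to linear on average, which is the standard hypothesis making the Chatterjee--Dembo/Yan machinery apply with error $o(p)$; condition \eqref{eq:weak} provides the uniform $O(p)$ bound on $\sum_i |(\mathbf A\mathbf u)_i|$ that keeps $\nabla f$ bounded in $\ell_1$, ensuring the relevant entropy/covering terms are $O(p)$ rather than larger. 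The first main step, then, is to verify the hypotheses of (the appropriate form of) Yan's Banach-space non-linear large deviation theorem for this $f$ and base measure, translate its conclusion into $\log Z_p \le \sup_{\mathbf u}[\E_{\mathbf u\text{-product}} f - \dkl] + o(p)$, and identify the right-hand side with $R_p$ via Lemma \ref{lemma:naive_approx} and Definition \ref{def:G}.

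For part (ii), the vector $\mathbf b$ of conditional means is the natural "self-consistent" candidate. I would argue that $\mathbf b$ nearly maximizes $M_p$ by a direct first-order / mean-value argument: the conditional distribution of $\beta_i$ given the rest is, by Definition \ref{def:exp}, exactly $\pi_{(-\frac{1}{\sigma^2}((\mathbf A\bbeta)_i - z_i)\,,\,d_i)}$ up to the sign convention, so $b_i = \dot c\big(h_i(\bbeta), d_i\big)$ with $h_i$ linear in $\bbeta$; taking expectations and using the characterization $\frac{\partial G}{\partial u}(u,d) = h(u,d)$ from Lemma \ref{lemma:G_stability}, one sees that $\mathbf b$ is an approximate stationary point of $M_p$. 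To upgrade "approximate stationary point" to "approximate maximizer up to $o(p)$", I would combine this with part (i): since $\log Z_p = R_p + o(p)$ and $\log Z_p \ge \E_\mu f - \dkl(\mu \| \prod \pi_i)$ with the latter gap controlled by a Pinsker/entropy-tensorization estimate that compares $\mu$ to the product of its conditionals, one gets $M_p(\mathbf b) \ge R_p - o(p)$; the reverse inequality is trivial since $R_p$ is the supremum. This is essentially the "mean-field fixed point is near-optimal" step familiar from \cite{basakmukherjee}.

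For part (iii), condition \eqref{eq:unique_mf} says the optimization landscape of $M_p$ has an approximate unique maximizer $\hat{\mathbf u}$ in the sense that any $\mathbf u$ at squared distance $\ge p\eta$ is strictly suboptimal by an $\Omega(p)$ margin. The strategy is a standard large-deviations-of-the-posterior argument: for a test function $\zeta$, consider the event $E_\varepsilon$ that the empirical functional deviates from its "$\hat{\mathbf u}$-predicted" value by more than $\varepsilon$; I would show $\mu(E_\varepsilon)$ is exponentially small by writing $\mu(E_\varepsilon) = \frac{1}{Z_p}\int_{E_\varepsilon} e^{f}\prod \pi_i$, bounding the restricted integral again via the non-linear large deviation upper bound (the same machinery as part (i) applies to the restricted measure), obtaining $\frac{1}{p}\log \mu(E_\varepsilon) \le \frac{1}{p}[\sup_{\mathbf u \in \tilde E_\varepsilon} M_p(\mathbf u) - R_p] + o(1)$ where $\tilde E_\varepsilon$ is the image of $E_\varepsilon$ under the marginalization map. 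The point is that by continuity of $\zeta$ and a concentration argument (the coordinates are conditionally nearly independent given the value of $\mathbf u$), any $\mathbf u$ consistent with $E_\varepsilon$ must be bounded away from $\hat{\mathbf u}$ in $\ell_2$, so $\tilde E_\varepsilon$ lies in the region where \eqref{eq:unique_mf} forces $M_p - R_p \le -cp$; the uniform integrability of $\frac1p\sum_i \delta_{D_p(i,i)}$ is used to control the tilted base measures $\pi_{(0,d_i)}$ uniformly, so that the map from $\mathbf u$ to the predicted second moments (and hence to $\int \zeta\, d\pi_{(h(\hat u_i,d_i),d_i)}$) behaves well. The main obstacle I anticipate is exactly this part (iii) comparison: one must carefully pass between the empirical-measure statement on $\bbeta$ and an $\ell_2$-geometry statement on the mean-parameter vector $\mathbf u$, and then feed a \emph{restricted} region into the non-linear large deviation upper bound while retaining the $o(p)$ error uniformly over the restriction — checking that the covering-number hypotheses of the Chatterjee--Dembo/Yan estimate survive the restriction to $E_\varepsilon$ is the delicate technical heart of the argument. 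Part (i)'s verification of the abstract hypotheses (translating $\mathrm{tr}(A_p^2) = o(p)$ into the precise "gradient complexity" bound the theorem wants) is the second most delicate point.
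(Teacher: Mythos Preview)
Your outline for part (i) is correct and matches the paper: Yan's non-linear large deviation bound applies directly under $\mathrm{tr}(A_p^2)=o(p)$, and the identification with $R_p$ is Lemma~\ref{lemma:naive_approx}.

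Part (ii), however, has a genuine gap. First, $\mathbf b$ is \emph{random}: $b_i=\dot c(\theta_i,d_i)$ with $\theta_i=(z_i-(A\bbeta)_i)/\sigma^2$ depending on the posterior sample $\bbeta$, so the statement is that $M_p(\mathbf b)\ge R_p-o(p)$ with high $\mu$-probability. Your claim that $\mathbf b$ is an approximate stationary point of $M_p$ is not correct as stated: the stationarity equations for $M_p$ read $u_i=\dot c\big((z_i-(A\mathbf u)_i)/\sigma^2,d_i\big)$, whereas $b_i=\dot c\big((z_i-(A\bbeta)_i)/\sigma^2,d_i\big)$, with $\bbeta$ in place of $\mathbf b$; these agree only if $\bbeta\approx\mathbf b$, which is what one is trying to prove. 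Your proposed upgrade via ``$\log Z_p\ge \E_\mu f-\dkl(\mu\|\prod\pi_i)$ with the gap controlled by entropy tensorization'' also does not work: the Gibbs principle gives \emph{equality} there, and in any case $\E_\mu f-\dkl(\mu\|\prod\pi_i)$ is not $M_p(\mathbf b)$. The paper instead shows directly that $\mu\{\mathbf b\in C_p(\varepsilon)\}\to0$ where $C_p(\varepsilon)=\{M_p(\cdot)<R_p-p\varepsilon\}$. The two ingredients are (a) a second-moment bound $\E_\mu\big[f(\bbeta)-f(\mathbf b)-\sum_i\theta_i(\beta_i-b_i)\big]^2=o(p^2)$, obtained from Yan's intermediate estimates together with \eqref{eq:weak}; and (b) --- the key idea you are missing --- a covering net of $\{A\bbeta:\bbeta\in[-1,1]^p\}$ in $\ell_2$ at scale $\sqrt{p}\delta$ with sub-exponential cardinality, available precisely because $\mathrm{tr}(A_p^2)=o(p)$ (cf.\ \cite[Lemma~3.4]{basakmukherjee}). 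On each cell of the net one freezes the random $\theta_i$'s to nearby constants $\theta_i^{\mathbf p}$, after which $\int\exp\big[\sum_i(\theta_i^{\mathbf p}\beta_i-c(\theta_i^{\mathbf p},d_i))\big]\prod_i\pi_i(d\beta_i)=1$ exactly, and a union bound over the net closes the argument.

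For part (iii), your restricted-NLD strategy is a different route from the paper and, as you yourself flag, making the Yan upper bound uniform over the restriction to $E_\varepsilon$ is delicate (and translating the event on $\bbeta$ into an $\ell_2$ constraint on a mean-parameter vector is not clean, since there is no ``$\mathbf u$'' attached to a given $\bbeta$). The paper avoids this entirely. It first proves a conditional-centering lemma: for any bounded $\phi$ and any $\mathbf c\in[-1,1]^p$, $\E_\mu\big[\sum_i c_i\{\phi(\beta_i)-\E_\mu[\phi(\beta_i)\mid\beta_k,k\ne i]\}\big]^2=O(p)$, via a one-step Taylor expansion of the perturbed local field, with \eqref{eq:mean_field} and \eqref{eq:weak} controlling the remainder. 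Since the conditional law of $\beta_i$ given the rest is $\pi_{(h(b_i,d_i),d_i)}$, applying this with $\phi(x)=x^r$, $c_i=(i/p)^s$ reduces the claim to showing $\frac1p\sum_i\big[H(b_i,d_i)-H(\hat u_i,d_i)\big](i/p)^s\to0$ in $\mu$-probability, where $H(x,y)=\int z^r\,d\pi_{(h(x,y),y)}$. Part (ii) combined with \eqref{eq:unique_mf} gives $\|\mathbf b-\hat{\mathbf u}\|_2^2=o(p)$ under $\mu$; the uniform integrability of $\frac1p\sum_i\delta_{D_p(i,i)}$ is used to truncate the $d_i$'s to a compact set on which $H$ is uniformly continuous, so $\ell_2$-closeness of $\mathbf b$ and $\hat{\mathbf u}$ transfers to the functional. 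No restricted large-deviation estimate is needed.
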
 
\begin{remark}
The careful reader would have already noticed that Theorem \ref{thm:main} is stated for \emph{deterministic} data $(\mathbf{y}, \mathbf{X})$. In practical applications, it is often more natural to assume that the data $(\mathbf{y}, \mathbf{X})$ is, in turn, sampled from some underlying distribution $\mathcal{P}$. 
The conclusions of Theorem \ref{thm:main} continue to hold as long as the sufficient conditions hold asymptotically with high probability under $\mathcal{P}$. 
\end{remark} 

\noindent
The condition \eqref{eq:weak} is extremely mild, and specifies that the log normalizing constant is asymptotically of order $p$. This condition is satisfied, for example, whenever the matrix $A_p = \mathbf{X}^{\mathrm{T}} \mathbf{X} - D_p$ has spectral norm $O(1)$. The assumption \eqref{eq:mean_field} is the non-trivial assumption in the statement above, and effectively guarantees the accuracy of the naive mean field approximation to leading exponential order. Note that if $ \lambda_1 \geq \cdots \geq \lambda_p$ denote the eigenvalues of $A_p$, then $\mathrm{tr}(A_p^2) = \sum_{i=1}^{p} \lambda_i^2$. Thus the requirement $\mathrm{tr}(A_p^2) =o(p)$ can be qualitatively interpreted to mean that the eigenstructure of $A$ is ``dominated" by a few top eigenvalues. Similar results were derived in the context of naive mean-field approximation for Potts models by one of the authors in \cite{basakmukherjee}.  Covariance matrices with an approximately low rank are ubiquitous in modern datasets, and we believe these conditions are satisfied in diverse applications of practical interest. Our result provides formal evidence to the correctness of widely used mean-field approximations in these settings. 
We prove Theorem \ref{thm:main} in Section \ref{subsec:proof_thm1}. 

The third part of our theorem provides further insights into the posterior distribution, assuming that the naive mean field approximation is ``dominated" by a unique factorized distribution. In the language of Statistical Physics, these distributions are referred to be in a ``pure phase". 


We now demonstrate a few applications of Theorem \ref{thm:main} to concrete examples, which cover both deterministic and random design matrices. We defer the proofs of these corollaries to Appendix \ref{sec:example_proofs}. 

\begin{corollary}\label{cor:det1}

Let $\mathbf{X}$ be any sequence of deterministic design matrices with ${\mathbf X}^{\mathrm T} {\mathbf X}=A_p+D_p$, where $A_p$ and $D_p$ represent the off diagonal and diagonal parts of ${\mathbf X}^{\mathrm T} {\mathbf X}$ as before. Suppose $A_p$ satisfies \eqref{eq:mean_field} and \eqref{eq:weak}, and the empirical measure $\frac{1}{p}\sum_{i=1}^pD_p(i,i)$ is uniformly integrable. Then the conclusion of Theorem \ref{thm:main} holds.
\end{corollary}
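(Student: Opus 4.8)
The plan is to observe that Corollary \ref{cor:det1} is an immediate specialization of Theorem \ref{thm:main}, so essentially nothing new needs to be proved; the content is purely in recognizing that the hypotheses match up. Concretely, the matrices $A_p$ and $D_p$ in the corollary are defined exactly as in the setup preceding Theorem \ref{thm:main} (the off-diagonal and diagonal parts of $\mathbf{X}^{\mathrm T}\mathbf{X}$), so the conditions \eqref{eq:mean_field} and \eqref{eq:weak} assumed in the corollary are verbatim the two displayed hypotheses of Theorem \ref{thm:main}, and the uniform integrability of $\tfrac{1}{p}\sum_{i=1}^p \delta_{D_p(i,i)}$ assumed in the corollary is exactly the extra hypothesis invoked in part (iii). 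Thus the proof I would write is a one-step reduction: invoke Theorem \ref{thm:main}.

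The one conceptual point worth spelling out is that all three of these conditions depend only on the design matrix $\mathbf{X}$, not on the response $\mathbf{y}$: condition \eqref{eq:mean_field} is $\mathrm{tr}(A_p^2)=o(p)$, condition \eqref{eq:weak} bounds a supremum of a functional of $A_p$ alone, and the uniform integrability is a statement about the diagonal entries of $\mathbf{X}^{\mathrm T}\mathbf{X}$. Since $A_p$ and $D_p$ are functions of $\mathbf{X}$ only, once $\mathbf{X}$ meets these requirements, Theorem \ref{thm:main}(i)--(ii) apply to the posterior $\mu_{\mathbf{y},\mathbf{X}}$ and to $R_p=\sup_{\mathbf{u}\in[-1,1]^p}M_p(\mathbf{u})$ for \emph{every} choice of $\mathbf{y}$ (recall $M_p$ depends on $\mathbf{y}$ only through $\mathbf{z}=\mathbf{X}^{\mathrm T}\mathbf{y}$, which enters harmlessly as a linear term), and part (iii) follows as soon as the additional localization hypothesis \eqref{eq:unique_mf} holds for some $\hat{\mathbf u}\in[-1,1]^p$.

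I do not expect any real obstacle here; the only minor bookkeeping is to confirm that ``$\tfrac{1}{p}\sum_{i=1}^p D_p(i,i)$ is uniformly integrable'' in the corollary statement is just shorthand for the sequence of empirical measures $\tfrac{1}{p}\sum_{i=1}^p\delta_{D_p(i,i)}$ being uniformly integrable, which is precisely what Theorem \ref{thm:main}(iii) requires. With that identification, the corollary is proved by direct invocation of Theorem \ref{thm:main}. (In contrast, the nontrivial examples with random $\mathbf{X}$ will require verifying \eqref{eq:mean_field}, \eqref{eq:weak} and the uniform integrability with high probability under the sampling law; for deterministic designs these are taken as hypotheses rather than things to establish, which is exactly why this corollary is immediate.)
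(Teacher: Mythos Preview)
Your proposal is correct and matches the paper's own proof, which is the single line ``This is immediate from Theorem \ref{thm:main}.'' Your additional remarks about the hypotheses depending only on $\mathbf{X}$ and the interpretation of the uniform integrability condition are accurate elaborations, but the core argument is identical.
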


\begin{corollary}\label{cor:reg1}

Suppose that the $i^{th}$ row of the design matrix ${\mathbf X}$ equals $n^{-1/2}  {\mathbf x}_i$, where $\{\mathbf{x}_i\}_{1\le i\le n}\stackrel{i.i.d.}{\sim} N(0,\Gamma_p)$. Assume that $p=o(n)$, and the following conditions hold:

\begin{enumerate}
\item[(a)]
The off diagonal part $\Gamma_{p,{\rm off}}$ of the covariance matrix $\Gamma_p$ satisfies ${\rm tr}(\Gamma_{p,{\rm off}}^2)=o(p)$.

\item[(b)]
$\lVert \Gamma_p\rVert_2=O(1)$.
\end{enumerate}
\noindent
 Then the conclusion of Theorem \ref{thm:main} holds.
\end{corollary}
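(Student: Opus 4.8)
The plan is to verify, with probability tending to $1$, the three hypotheses used in Theorem~\ref{thm:main} — the mean-field condition \eqref{eq:mean_field}, the mild growth condition \eqref{eq:weak}, and the uniform integrability of $\frac1p\sum_{i=1}^p\delta_{D_p(i,i)}$ — and then invoke the Remark following Theorem~\ref{thm:main} to transfer the conclusion to the random-design setting. Write $\widehat{\Gamma}_p:=\mathbf{X}^{\mathrm{T}}\mathbf{X}=\frac1n\sum_{i=1}^n\mathbf{x}_i\mathbf{x}_i^{\mathrm{T}}$ for the (rescaled) sample covariance, so that $A_p$ and $D_p$ are exactly the off-diagonal and diagonal parts of $\widehat{\Gamma}_p$, and $\E\widehat{\Gamma}_p=\Gamma_p$. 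Two events must be shown to have probability $\to 1$; everything then holds on their intersection.

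\textbf{Step 1: operator-norm control, and \eqref{eq:weak} plus uniform integrability.} First I would establish $\lVert\widehat{\Gamma}_p\rVert_2=O(1)$ w.h.p. Writing $\mathbf{x}_i=\Gamma_p^{1/2}\mathbf{g}_i$ with $\mathbf{g}_i\stackrel{i.i.d.}{\sim}N(0,I_p)$ gives $\widehat{\Gamma}_p=\Gamma_p^{1/2}\big(\tfrac1n GG^{\mathrm{T}}\big)\Gamma_p^{1/2}$ for $G=[\mathbf{g}_1\,|\,\cdots\,|\,\mathbf{g}_n]$, hence $\lVert\widehat{\Gamma}_p\rVert_2\le\lVert\Gamma_p\rVert_2\cdot\tfrac1n\lVert G\rVert_2^2$; the classical non-asymptotic bound on the largest singular value of a Gaussian matrix (Davidson--Szarek) gives $\lVert G\rVert_2\le\sqrt n+\sqrt p+t$ with probability $\ge 1-2e^{-t^2/2}$, and taking $t=\sqrt p$ together with $p=o(n)$ yields $\tfrac1n\lVert G\rVert_2^2\le(1+2\sqrt{p/n})^2\to 1$, so $\lVert\widehat{\Gamma}_p\rVert_2\le 2\lVert\Gamma_p\rVert_2=O(1)$ w.h.p.\ by (b). Two consequences follow at once. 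Since $D_p(i,i)=\widehat{\Gamma}_p(i,i)\le\lVert\widehat{\Gamma}_p\rVert_2$, we get $\lVert A_p\rVert_2\le\lVert\widehat{\Gamma}_p\rVert_2+\lVert D_p\rVert_2\le 2\lVert\widehat{\Gamma}_p\rVert_2=O(1)$, so for every $\mathbf{u}\in[-1,1]^p$,
\[
\sum_{i=1}^p\Big|\sum_{j=1}^p A_p(i,j)\,u_j\Big| \;=\; \lVert A_p\mathbf{u}\rVert_1 \;\le\; \sqrt{p}\,\lVert A_p\mathbf{u}\rVert_2 \;\le\; \sqrt{p}\,\lVert A_p\rVert_2\,\lVert\mathbf{u}\rVert_2 \;\le\; p\,\lVert A_p\rVert_2 \;=\; O(p),
\]
which is \eqref{eq:weak}; and on the same event all the $D_p(i,i)$ lie in a fixed compact interval $[0,C]$, so $\frac1p\sum_i\delta_{D_p(i,i)}$ is trivially uniformly integrable.

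\textbf{Step 2: the mean-field condition \eqref{eq:mean_field}.} Here I would use a first-moment bound. Since $A_p$ is symmetric with zero diagonal, $\mathrm{tr}(A_p^2)=\sum_{i\neq j}\widehat{\Gamma}_p(i,j)^2$. For $i\neq j$ the variables $x_{ki}x_{kj}$, $1\le k\le n$, are i.i.d.\ with mean $\Gamma_p(i,j)$, and Isserlis' formula for the mean-zero Gaussian pair $(x_{1i},x_{1j})$ gives $\mathrm{Var}(x_{1i}x_{1j})=\Gamma_p(i,i)\Gamma_p(j,j)+\Gamma_p(i,j)^2$, so
\[
\E\big[\widehat{\Gamma}_p(i,j)^2\big] \;=\; \Gamma_p(i,j)^2 + \frac1n\big(\Gamma_p(i,i)\Gamma_p(j,j)+\Gamma_p(i,j)^2\big).
\]
Summing over $i\neq j$ and using $\sum_{i\neq j}\Gamma_p(i,j)^2=\mathrm{tr}(\Gamma_{p,\mathrm{off}}^2)$,
\[
\E\big[\mathrm{tr}(A_p^2)\big] \;=\; \Big(1+\tfrac1n\Big)\mathrm{tr}(\Gamma_{p,\mathrm{off}}^2) \;+\; \frac1n\Big[\Big(\sum_{i=1}^p\Gamma_p(i,i)\Big)^2-\sum_{i=1}^p\Gamma_p(i,i)^2\Big].
\]
The first term is $o(p)$ by (a); for the second, $\sum_i\Gamma_p(i,i)\le p\lVert\Gamma_p\rVert_2$, so it is at most $p^2\lVert\Gamma_p\rVert_2^2/n=O(p^2/n)=o(p)$ since $p=o(n)$ and (b). Thus $\E[\mathrm{tr}(A_p^2)]=o(p)$, and Markov's inequality gives $\mathrm{tr}(A_p^2)=o(p)$ with probability tending to $1$, i.e.\ \eqref{eq:mean_field}. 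Combining Steps 1 and 2 with the Remark after Theorem~\ref{thm:main} completes the argument.

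\textbf{Main obstacle.} The only ingredient going beyond elementary second-moment computations is the bound $\lVert\widehat{\Gamma}_p\rVert_2=O(1)$ in Step~1; although classical, it is genuinely needed, since the naive estimate $\lVert\widehat{\Gamma}_p\rVert_2\le\sqrt{\mathrm{tr}(\widehat{\Gamma}_p^2)}$ is of order $\sqrt p$ and too weak for \eqref{eq:weak}. The assumption $p=o(n)$ is used in exactly two places: to force $\tfrac1n\lVert G\rVert_2^2\to 1$ in Step~1, and to kill the term $\tfrac1n\big(\sum_i\Gamma_p(i,i)\big)^2=O(p^2/n)$ in Step~2.
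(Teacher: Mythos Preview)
Your argument is correct. The route differs from the paper's: the paper invokes a single sample-covariance concentration bound (Vershynin, \cite{vershynin2012close}, Proposition~2.1) to get $\lVert\widehat{\Gamma}_p-\Gamma_p\rVert_2=O_P(\sqrt{p/n})=o_P(1)$, and then reads off everything from this one estimate. In particular, $\lVert A_p-\Gamma_{p,\mathrm{off}}\rVert_2=o_P(1)$ together with $\lVert\cdot\rVert_F\le\sqrt{p}\,\lVert\cdot\rVert_2$ gives $\mathrm{tr}(A_p^2)\le(\lVert\Gamma_{p,\mathrm{off}}\rVert_F+\sqrt{p}\,\lVert A_p-\Gamma_{p,\mathrm{off}}\rVert_2)^2=o(p)$, and $\lVert\widehat{\Gamma}_p\rVert_2\le\lVert\Gamma_p\rVert_2+o_P(1)$ handles \eqref{eq:weak} and the boundedness of the $D_p(i,i)$. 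Your approach instead splits the work: Davidson--Szarek for the operator-norm control in Step~1, and a direct Isserlis/Markov first-moment computation for \eqref{eq:mean_field} in Step~2. The second-moment calculation is more elementary than the operator-norm concentration the paper leans on, and has the minor advantage that it isolates exactly where $p=o(n)$ enters the mean-field condition (the $p^2/n$ term). The paper's one-shot approach is slightly cleaner and also yields the stronger statement $\lVert A_p-\Gamma_{p,\mathrm{off}}\rVert_2=o_P(1)$, which it reuses later (e.g.\ in the proof of Corollary~\ref{cor:reg2}).
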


%
%
%
%

Sparse design matrices arise routinely in coding theory \cite{gallager1962low,mackay1999good} and genomics \cite{wu2011rare,tang2014large}. Our next corollary discusses the accuracy of the nVB approximation in the context of a linear regression problem with a sparse bernoulli design. We assume that the entries of the design are independent, but not necessarily identical. 

\begin{corollary}\label{cor:erdos1}

Suppose that the $(i,j)^{th}$ entry of the design matrix ${\mathbf X}$ equals $\sqrt{\frac{p}{n}} B(i,j)$, where $\{B(i,j)\}_{1\le i\le n,1\le j\le p}$ are mutually independent Bernoullis with $\P(B(i,j)=1)\le \frac{\lambda}{p}$, for some $\lambda>0$ free of $p$. If $p=o(n)$, the conclusion of Theorem \ref{thm:main} applies.

\end{corollary}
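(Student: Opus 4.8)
The plan is to verify the two sufficient conditions \eqref{eq:mean_field} and \eqref{eq:weak} of Theorem \ref{thm:main} for the design matrix ${\mathbf X}$ with entries $\sqrt{p/n}\, B(i,j)$, and then invoke Corollary \ref{cor:det1} (or directly Theorem \ref{thm:main}) after checking that the empirical measure of the diagonal entries $\frac{1}{p}\sum_i D_p(i,i)$ is uniformly integrable. Write $W = {\mathbf X}^{\mathrm T}{\mathbf X}$, so $W(j,k) = \frac{p}{n}\sum_{i=1}^n B(i,j)B(i,k)$, and $A_p = W - D_p$ is the off-diagonal part. Since this is a random design, by the Remark following Theorem \ref{thm:main} it suffices to show that \eqref{eq:mean_field} holds with probability tending to $1$ and that \eqref{eq:weak} holds with probability tending to $1$; uniform integrability of the diagonal empirical measure can likewise be checked in probability.

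First I would control $\mathrm{tr}(A_p^2) = \sum_{j \neq k} W(j,k)^2$ in expectation. For $j \neq k$, $\E[W(j,k)^2] = \frac{p^2}{n^2}\E\big[(\sum_i B(i,j)B(i,k))^2\big]$; expanding the square and using independence across $i$ and across columns, the diagonal-in-$i$ terms contribute $n\,\E[B(1,j)B(1,k)] \le n (\lambda/p)^2$ and the off-diagonal-in-$i$ terms contribute $n(n-1)(\E[B(1,j)B(1,k)])^2 \le n^2(\lambda/p)^4$. Hence $\E[W(j,k)^2] \le \frac{p^2}{n^2}\big(n\lambda^2/p^2 + n^2\lambda^4/p^4\big) = \frac{\lambda^2}{n} + \frac{\lambda^4}{p^2}$. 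Summing over the $p(p-1) < p^2$ off-diagonal pairs gives $\E[\mathrm{tr}(A_p^2)] \le p^2\big(\lambda^2/n + \lambda^4/p^2\big) = \lambda^2 p^2/n + \lambda^4$. Since $p = o(n)$, this is $o(p) + O(1) = o(p)$, and Markov's inequality gives $\mathrm{tr}(A_p^2) = o(p)$ with probability $\to 1$, establishing \eqref{eq:mean_field}.

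Next, for \eqref{eq:weak} I would bound $\sup_{{\mathbf u}\in[-1,1]^p}\sum_j \big|\sum_k A_p(j,k)u_k\big| \le \sum_{j}\sum_{k \neq j} W(j,k) = \mathbf{1}^{\mathrm T} W \mathbf{1} - \mathrm{tr}(D_p) = \frac{p}{n}\sum_{i=1}^n\big[(\sum_j B(i,j))^2 - \sum_j B(i,j)^2\big] = \frac{p}{n}\sum_i \sum_{j\ne k}B(i,j)B(i,k)$. Its expectation is $\frac{p}{n}\cdot n \cdot p(p-1)(\E[B(1,1)B(1,2)]) \le \frac{p}{n}\cdot n\cdot p^2 (\lambda/p)^2 = \lambda^2 p$, so this quantity is $O(p)$ in expectation, hence $O(p)$ with probability $\to 1$ by Markov; this gives \eqref{eq:weak}. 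Finally, for the diagonal: $D_p(j,j) = W(j,j) = \frac{p}{n}\sum_i B(i,j)^2 = \frac{p}{n}\sum_i B(i,j)$, which has mean $\le \frac{p}{n}\cdot n\cdot\lambda/p = \lambda$; a standard first/second moment argument (or noting these are sums of independent Bernoullis, so one can bound, say, $\frac1p\sum_j D_p(j,j)^2$ in expectation) shows $\frac1p\sum_j\delta_{D_p(j,j)}$ is uniformly integrable with probability $\to 1$. Assembling these three facts and applying Corollary \ref{cor:det1} yields the claim.

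The main obstacle is purely bookkeeping in the moment computations: one must carefully separate the ``diagonal in the sample index $i$'' terms (which carry a single factor of $\lambda/p$ and are responsible for the $p^2/n$ scaling) from the ``off-diagonal in $i$'' terms (which carry $(\lambda/p)^2$ and are lower order), and make sure no term of order $p$ or larger is overlooked in $\mathrm{tr}(A_p^2)$. There is no deep difficulty; the only structural point is the condition $p = o(n)$, which is exactly what forces the dominant $\lambda^2 p^2/n$ contribution to $\E[\mathrm{tr}(A_p^2)]$ to be $o(p)$ — this is where the hypothesis is used in an essential way, and it is worth flagging that the argument would break if $p \asymp n$.
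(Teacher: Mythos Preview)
Your proposal is correct and follows essentially the same route as the paper's proof: both verify \eqref{eq:mean_field} and \eqref{eq:weak} via first-moment bounds on $\sum_{j\ne k}A_p(j,k)^2$ and $\sum_{j\ne k}A_p(j,k)$ respectively, and check uniform integrability of the diagonal empirical measure by bounding $\frac1p\sum_j \E[D_p(j,j)^2]$. The moment computations and the use of $p=o(n)$ are identical in substance.
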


\subsection{Scaling limit for the log-normalizing constant}
Under the asymptotic validity of the naive mean-field approximation, we derive an asymptotic scaling limit for the log-normalizing constant. We will subsequently establish that this limiting description captures crucial information regarding the behavior of the optimizers at finite $n,p$. To this end, we will require some notation. 

The theory of dense graph limits was developed in  \cite{borgs2008convergent,borgs2012convergent,lovasz2007szemeredi}, and has received tremendous attention over the last decade in Probability, Combinatorics, Computer Science and Statistics. We refer the interested reader to \cite{lovasz2012large} for an in-depth survey of this area. 
Follow up work of Borgs et. al.~\cite{borgs2019L,borgs2018p} has extended  this theory significantly beyond the regime of dense graphs---the resulting $L^p$-convergence theory can handle sparse graphs and weighted matrices. Utilizing this set up, our next result will show that under the assumption that the off diagonal matrix $A_p$ obtained from $\mathbf{X}^{\mathrm{T}}\mathbf{X}$ converges in cut norm to a suitable graphon (need not be bounded), the corresponding log normalizing constant converges in probability to a deterministic optimization problem. We note that the use of cut-norms on matrices precedes the development of graph limit theory (see e.g. \cite{frieze1999quick} and references therein). 

\begin{definition}\label{def:cut}
A function $W:[0,1]^2 \mapsto \mathbb{R}$ is called symmetric if $W(x,y) = W(y,x)$ for all $x,y \in [0,1]$. Any symmetric function $W:[0,1]^2\mapsto \mathbb{R}$ which is $L^1$ integrable, i.e.~$\lVert W\rVert_1:=\int_{[0,1]^2}|W(x,y)|dxdy<\infty$ is called a graphon. Let $\mathcal{W}$ denote the space of all graphons.
\end{definition}			

\noindent 
The cut norm of a graphon $W$ is given by
			 $$\|{W}\|_\square=\Big|\sup_{S,T\subset [0,1]}\int_{S\times T}W(x,y)dxdy\Big|.$$
			  The cut norm is equivalent to the  $L^\infty\mapsto L^1$ operator norm defined by
			 \begin{align}\label{eq:cut}\lVert W\rVert_1:=\sup_{f,g: \lVert f\rVert_\infty, \lVert g\rVert_\infty \le 1}\Big|\int_{[0,1]^2} W(x,y)f(x)g(x)dxdy\Big|.\end{align}
More precisely, we have
		$\lVert{W}\rVert_\square\le\lVert{W}\rVert_{\infty\mapsto 1}\le 4\lVert{W}\rVert_\square.$
		It also follows from \eqref{eq:cut} that the cut norm is weaker than the $L^1$ norm, i.e. convergence in $L^1$ implies convergence in cut norm.

\begin{definition}
Given a symmetric $p\times p$ matrix $B$ with real entries, define a piecewise constant function on $[0,1]^2$ by dividing $[0,1]^2$ into $p^2$ smaller squares each of length $1/p$, and set
			\begin{align*}
			W_{B}(x,y):=&B(i,j)\text{ if }\lceil px\rceil =i, \lceil py\rceil =j \,\textrm{with}\,{ }i\ne j,\\
			=&0\textrm{ otherwise.} 
			\end{align*}
\end{definition}
\noindent
We will also need the following notion for embedding vectors into functions. 
\begin{definition}{\bf(Vector to function)}
Given  ${\bf t}:=(t_1,\cdots,t_p)\in \mathbb{R}^p$, define the piecewise constant function $w_{{\bf {t}} ,p}$ on $[0,1]$ by dividing $[0,1]$ into $p$ intervals $\cup_{i=1}^p\frac{1}{p}(i-1,i]$ of equal length $1/p$, and setting $w_{{\bf t}}(x):=t_i$ if $x$ is in the $i^{th}$ interval, i.e. $\lceil px\rceil =i$. 
\end{definition}

To derive the scaling limit, we will assume an underlying model $\mathbf{y} = \mathbf{X} {\bm \beta}_0 + \varepsilon$, where $\varepsilon \sim \mathcal{N}(0, \sigma^2 I)$. We say that a random variable $f:= f(\mathbf{y}, \mathbf{X}) \stackrel{P| \mathbf{X}}{\longrightarrow} 0$ if for any $\varepsilon>0$,  $\mathbb{P}\Big[ | f | > \varepsilon | \mathbf{X} \Big] \to 0$ as $p \to \infty$. Thus this convergence is conditional on the sequence of design matrices. 
%
%
In our subsequent analysis, the following representation lemma will be crucial. 
\begin{lemma}\label{lem:off_diagonal}

Suppose we are in the setting of Theorem \ref{thm:main}. Assume further that 
 \[ \quad \sum_{i=1}^pD_p(i,i)=O(p).\]
 Then there exists $(\xi_1,\cdots\xi_p)\stackrel{i.i.d.}{\sim}N(0,1)$ such that 
 \[ \frac{1}{p}\sup_{{\bf u}\in [-1,1]^p}\Big|M_p({\bf u})-\widetilde{M}_p({\bf u})\Big| \stackrel{P|\mathbf{X}}{\to} 0,\]
 where
 $$\widetilde{M}_p({\bf u}):=-\frac{1}{2\sigma^2}\Big[{\bf u}'A{\bf u}-2\sum_{i=1}^p u_i\sqrt{D_p(i,i)}\xi_i-2\bbeta_0' {\mathbf X}'{\mathbf X}{\bf u}\Big]-\sum_{i=1}^p G(u_i,d_i).$$

\end{lemma}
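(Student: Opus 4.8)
\textbf{Proof proposal for Lemma \ref{lem:off_diagonal}.}

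The plan is to expand the quadratic form $\lVert \mathbf{y} - \mathbf{X}\bbeta\rVert_2^2$ under the true model $\mathbf{y} = \mathbf{X}\bbeta_0 + \varepsilon$ and identify the linear term $\mathbf{z}^{\mathrm{T}}\bbeta = \mathbf{y}^{\mathrm{T}}\mathbf{X}\bbeta$ as the sum of a ``signal'' piece $\bbeta_0^{\mathrm{T}}\mathbf{X}^{\mathrm{T}}\mathbf{X}\bbeta$ and a ``noise'' piece $\varepsilon^{\mathrm{T}}\mathbf{X}\bbeta$. The signal piece appears verbatim in $\widetilde M_p$, so the whole task reduces to approximating the Gaussian noise linear functional $\mathbf{u}\mapsto \varepsilon^{\mathrm{T}}\mathbf{X}\mathbf{u} = \sum_{i=1}^p (\mathbf{X}^{\mathrm{T}}\varepsilon)_i u_i$ by $\sum_{i=1}^p \sqrt{D_p(i,i)}\,\xi_i\, u_i$ for suitable i.i.d.\ standard normals $\xi_i$. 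First I would observe that conditionally on $\mathbf{X}$, the vector $\mathbf{g} := \mathbf{X}^{\mathrm{T}}\varepsilon$ is exactly Gaussian with mean $0$ and covariance $\sigma^2 \mathbf{X}^{\mathrm{T}}\mathbf{X} = \sigma^2(A_p + D_p)$. Write $\mathbf{g} = \mathbf{g}_{\mathrm{diag}} + \mathbf{g}_{\mathrm{off}}$ where $\mathbf{g}_{\mathrm{diag}}$ has independent coordinates with variances $\sigma^2 D_p(i,i)$; concretely, couple by choosing $\xi_i := g_i / (\sigma\sqrt{D_p(i,i)})$ when $D_p(i,i) > 0$ — these are standard normal but not independent — and then argue the dependence is negligible, or more cleanly, use a Gaussian comparison: there exists a coupling of $\mathbf{g}$ and an i.i.d.\ sequence $(\xi_i)$ such that the discrepancy functional is small uniformly over $\mathbf{u}\in[-1,1]^p$.

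The key estimate to make precise is that
\[
\frac{1}{p}\sup_{\mathbf{u}\in[-1,1]^p}\Big| \varepsilon^{\mathrm{T}}\mathbf{X}\mathbf{u} - \sum_{i=1}^p \sqrt{D_p(i,i)}\,\xi_i\, u_i \Big| \stackrel{P|\mathbf{X}}{\to} 0.
\]
Since the supremum of a linear functional over $[-1,1]^p$ is the $\ell^1$ norm of the coefficient vector, this is equivalent to $\frac{1}{p}\sum_{i=1}^p |g_i - \sigma\sqrt{D_p(i,i)}\xi_i| \to 0$ in probability conditional on $\mathbf{X}$. The natural route: let $\Sigma = \sigma^2(A_p + D_p)$ and let $\Sigma_0 = \sigma^2 D_p$; one can write $\mathbf{g} = \Sigma^{1/2}\mathbf{w}$ and set the target as $\Sigma_0^{1/2}\mathbf{w}$ (same Gaussian source $\mathbf{w}\sim N(0,I_p)$), so $\xi_i$ are i.i.d.\ $N(0,1)$, and then $\E_{\mathbf{w}}\big[\frac1p\sum_i|g_i - \sigma\sqrt{D_p(i,i)}\xi_i|\big] \le \frac1p\sum_i \big(\E(g_i - \sigma\sqrt{D_p(i,i)}\xi_i)^2\big)^{1/2} = \frac{C}{p}\sum_i \lVert e_i^{\mathrm{T}}(\Sigma^{1/2} - \Sigma_0^{1/2})\rVert_2$, and by concavity/trace bounds this is at most $C' p^{-1}\sqrt{p\cdot \mathrm{tr}((\Sigma^{1/2}-\Sigma_0^{1/2})^2)}$. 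It then remains to bound $\mathrm{tr}((\Sigma^{1/2}-\Sigma_0^{1/2})^2) = o(p)$; using the operator-monotonicity / Powers--Størmer type inequality $\mathrm{tr}((\Sigma^{1/2}-\Sigma_0^{1/2})^2) \le \mathrm{tr}(|\Sigma - \Sigma_0|) = \sigma^2\,\mathrm{tr}(|A_p|)$, and $\mathrm{tr}(|A_p|) \le \sqrt{p\,\mathrm{tr}(A_p^2)} = o(p)$ by \eqref{eq:mean_field} and Cauchy--Schwarz. Markov's inequality then gives the conditional-in-probability statement. The $\sum_i D_p(i,i) = O(p)$ assumption enters to control $\mathrm{tr}(\Sigma_0)$ and keep all the moment bounds genuinely $o(p)$ after the $\frac1p$ scaling.

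One subtlety I would be careful about: $D_p(i,i)$ can be zero (or the coordinate-wise comparison needs $\Sigma_0^{1/2}$ to make sense even when $A_p+D_p$ is singular), but since $\Sigma_0^{1/2}$ is just the diagonal matrix $\sigma\,\mathrm{diag}(\sqrt{D_p(i,i)})$ this is harmless — the $\xi_i$ are defined directly from the Gaussian source $\mathbf{w}$, not by inverting anything. The main obstacle is the operator-norm/trace manipulation showing $\mathrm{tr}((\Sigma^{1/2}-\Sigma_0^{1/2})^2) = o(p)$ from $\mathrm{tr}(A_p^2) = o(p)$; the Powers--Størmer inequality $\mathrm{tr}((P^{1/2}-Q^{1/2})^2)\le \mathrm{tr}(|P-Q|)$ for positive semidefinite $P,Q$ is the clean way to do it, but if one wants to avoid invoking it, an alternative is to bound $\lVert \Sigma^{1/2} - \Sigma_0^{1/2}\rVert_F$ directly via the integral representation $\Sigma^{1/2} = \frac1\pi\int_0^\infty (\Sigma+tI)^{-1}\Sigma\, t^{-1/2}\,dt$ together with $\lVert\Sigma\rVert_2, \lVert\Sigma_0\rVert_2 = O(1)$ (which follows from $\lVert A_p\rVert_2 = O(1)$, itself implied by $\mathrm{tr}(A_p^2)=o(p)$ only in an averaged sense — so here one genuinely needs either the Powers--Størmer route or an explicit spectral-norm hypothesis). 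I would present the Powers--Størmer argument as the main line, and finish by noting that the signal term $\bbeta_0^{\mathrm{T}}\mathbf{X}^{\mathrm{T}}\mathbf{X}\mathbf{u}$ requires no approximation at all, so combining the two pieces yields $\frac1p\sup_{\mathbf{u}}|M_p(\mathbf{u}) - \widetilde M_p(\mathbf{u})| \stackrel{P|\mathbf{X}}{\to} 0$.
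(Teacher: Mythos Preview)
Your approach is correct and in fact cleaner than the paper's. Both arguments reduce to constructing i.i.d.\ standard normals $\xi_i$ so that the Gaussian vector $\mathbf g=\mathbf X^{\mathrm T}\varepsilon$ (covariance $\sigma^2(A_p+D_p)$) is close in $\ell^1$ to the diagonal surrogate $\sigma\,\mathrm{diag}(\sqrt{D_p(i,i)})\,\xi$, uniformly over $[-1,1]^p$. The paper does this by first truncating away coordinates with very small or very large $D_p(i,i)$ (via a scale $\delta_p\to 0$ chosen so that $\delta_p^{-4}\mathrm{tr}(A_p^2)=o(p)$; this truncation is exactly where $\sum_iD_p(i,i)=O(p)$ is used), then normalizing the surviving block to the correlation matrix $\Gamma_p$, setting $\xi:=\Gamma_p^{-1/2}\mathbf Y$, and bounding the error via the elementary eigenvalue estimate $|\sqrt{1+\lambda_\ell}-1|\le C|\lambda_\ell|$ together with $\sum_\ell\lambda_\ell^2\le \delta_p^{-2}\mathrm{tr}(A_p^2)$.

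Your route sidesteps the truncation entirely: couple through a common white-noise source $\mathbf w\sim N(0,I_p)$ and invoke Powers--St{\o}rmer,
\[
\mathrm{tr}\big((\Sigma^{1/2}-\Sigma_0^{1/2})^2\big)\le \mathrm{tr}|\Sigma-\Sigma_0|=\sigma^2\,\mathrm{tr}|A_p|\le \sigma^2\sqrt{p\,\mathrm{tr}(A_p^2)}=o(p),
\]
which after Cauchy--Schwarz on the $\ell^1$ sum gives the $o(1)$ bound directly. This is shorter, handles $D_p(i,i)=0$ without case analysis, and---contrary to what you say---never actually uses $\sum_iD_p(i,i)=O(p)$: your chain of inequalities does not touch $\mathrm{tr}(\Sigma_0)$. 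The only point worth making explicit is how $\mathbf w$ is recovered from $\mathbf g$ when $\Sigma$ is singular (apply the Moore--Penrose square root on $\mathrm{range}(\Sigma)$ and augment with independent standard normals on $\ker\Sigma$); this is routine. The trade-off is that you cite Powers--St{\o}rmer rather than doing the paper's hands-on spectral computation, but that inequality is standard and entirely appropriate here.
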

\noindent

We will derive a limiting formula for the log-normalizing constant \eqref{eq:normalizing} in terms of a variational problem on a space of probability distributions. This requires the following definition.

\begin{definition}
\label{def:limit_prob} 
Let $\mathscr{F}$ denote the space of all bounded measurable functions from $[0,1]\times \R$ to $[-1,1]$. 
Fixing $W\in \mathcal{W}, g,\psi\in L^1[0,1]$, define a functional $\mathcal{G}_{W,g,\psi}(\cdot) : \mathscr{F} \to \mathbb{R}$ by setting
\begin{align*}
\mathcal{G}_{W,g,\psi}(F):=&\frac{1}{\sigma^2} \left( - \frac{1}{2} \mathbb{E}[W(X, X') F(X,Z') F(X',Z')] + \mathbb{E}[g(X) F(X,Z) ]+ \mathbb{E}[\sqrt{\psi(X)} F(X,Z) Z] \right)\\
 -& \mathbb{E}\Big[G \Big (F(X,Z), \frac{\psi(X)}{\sigma^2}  \Big) \Big],
\end{align*}
where $(X,X')\stackrel{i.i.d.}{\sim}U[0,1]$ and $(Z,Z')\stackrel{i.i.d.}{\sim}N(0,1)$ are mutually independent.
\end{definition}

\begin{theorem}
\label{thm:var_conv_new} 
Suppose ${\bf y}\sim N\Big({\mathbf X}{\bm \beta}_0,\sigma^2{\bf I}\Big)$. Writing ${\mathbf X}^{\mathrm T}{\mathbf X}=A_p+D_p$, set $\mathbf{D}$ to denote the vector in $\mathbb{R}^p$ containing the diagonal entries of $D_p$.  Assume the following:

\begin{enumerate}
\item[(i)]$d_\square(W_{pA_p},W)\to 0$ for some $W\in \mathcal{W}$.

\item[(ii)] $ w_{\bbeta_0 }\stackrel{L^1}{\rightarrow} \phi(\cdot)$, and  $w_{{\bf D} }\stackrel{L^1}{\rightarrow} \psi(\cdot)$.

\item[(iii)] ${\rm tr}(A_p^2)=o(p)$.
\end{enumerate}
Define a function $g\in L^1[0,1]$ by $$g(x)=\int_{[0,1]} W(x,y) \phi(y) \mathrm{d}y+ \psi(x) \phi(x).$$ 
Then we have, 
\begin{align}
&\frac{1}{p}\sup_{{\bf u}\in [-1,1]^p}M_p({\bf u} ) \stackrel{P|\mathbf{X}}{\longrightarrow} \sup_{F\in \mathscr{F}} \mathcal{G}_{W,g,\psi}(F). \label{eq:var_prob} 
\end{align}
\end{theorem}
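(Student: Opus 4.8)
## Proof proposal for Theorem \ref{thm:var_conv_new}

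The plan is to pass from the finite-dimensional optimization $\frac{1}{p}\sup_{{\bf u}}M_p({\bf u})$ to the infinite-dimensional functional $\sup_F \mathcal{G}_{W,g,\psi}(F)$ in two main stages: first replace $M_p$ by the linearized surrogate $\widetilde{M}_p$ from Lemma \ref{lem:off_diagonal}, and then identify the scaling limit of $\frac{1}{p}\sup_{{\bf u}}\widetilde{M}_p({\bf u})$ via the $L^p$-graphon convergence hypotheses. For the first stage I would check that hypotheses (i)--(iii) here, together with $\mathbf{y}\sim N(\mathbf{X}\bbeta_0,\sigma^2 I)$, put us in the setting of Lemma \ref{lem:off_diagonal}: condition (iii) is exactly \eqref{eq:mean_field}, and \eqref{eq:weak} plus $\sum_i D_p(i,i)=O(p)$ follow from $d_\square(W_{pA_p},W)\to 0$ and $w_{\bf D}\to\psi$ in $L^1$ (the cut-norm convergence controls $\sup_{\bf u}\sum_i|\sum_j A_p(i,j)u_j|$ up to the constant-$4$ factor between cut norm and $L^\infty\to L^1$ norm). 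Hence $\frac{1}{p}\sup_{\bf u}|M_p({\bf u})-\widetilde{M}_p({\bf u})|\xrightarrow{P|\mathbf{X}}0$, and it suffices to compute the limit of $\frac1p\sup_{\bf u}\widetilde{M}_p({\bf u})$.

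For the second stage, the key observation is that $\widetilde{M}_p$ depends on ${\bf u}$ only through (a) the quadratic form ${\bf u}'A_p{\bf u}$, (b) the linear term $\sum_i u_i(\sqrt{D_p(i,i)}\xi_i + (\mathbf{X}'\mathbf{X}\bbeta_0)_i)$, and (c) the separable penalty $\sum_i G(u_i,d_i)$. I would encode a candidate ${\bf u}$ as a measurable function via ${\bf u}\leftrightarrow w_{{\bf u},p}$, but crucially the optimal ${\bf u}$ depends on the realized Gaussian vector $(\xi_i)$, so the right limiting object is a function $F(x,z)$ of both the spatial coordinate $x\in[0,1]$ and an auxiliary Gaussian coordinate $z$; one sets $u_i \approx F(i/p,\xi_i)$. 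Then ${\bf u}'A_p{\bf u}/p \to \mathbb{E}[W(X,X')F(X,Z')F(X',Z')]$ by cut-norm convergence of $W_{pA_p}$ to $W$ combined with the fact that $\mathrm{tr}(A_p^2)=o(p)$ kills the diagonal-vs-offdiagonal discrepancy and lets one average over the independent $Z$'s (the standard trick: the bilinear form against a product test function converges under cut-norm convergence, and one approximates $F$ by functions piecewise constant in a finite partition of both coordinates). The linear term splits: $\frac1p\sum_i u_i\sqrt{D_p(i,i)}\xi_i \to \mathbb{E}[\sqrt{\psi(X)}F(X,Z)Z]$ using $w_{\bf D}\to\psi$ in $L^1$ and a law-of-large-numbers/continuity argument for the Gaussian coordinate, while $\frac1p(\mathbf{X}'\mathbf{X}\bbeta_0)'{\bf u} = \frac1p(A_p\bbeta_0 + D_p\bbeta_0)'{\bf u} \to \mathbb{E}[g(X)F(X,Z)]$ with $g(x)=\int W(x,y)\phi(y)dy + \psi(x)\phi(x)$, which is exactly the stated $g$ — the first summand from cut-norm convergence against $w_{\bbeta_0}\to\phi$, the second from the diagonal. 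Finally $\frac1p\sum_i G(u_i,d_i)\to \mathbb{E}[G(F(X,Z),\psi(X)/\sigma^2)]$ using $d_i=D_p(i,i)/\sigma^2$, $w_{\bf D}\to\psi$, continuity of $G$ and the uniform Lipschitz bound $|\partial G/\partial d|\le \tfrac12$ from Lemma \ref{lemma:G_stability} (together with boundedness of $u_i$ and continuity of $u\mapsto G(u,d)$ on the compact $[-1,1]$). Assembling these gives $\frac1p\widetilde{M}_p(w_{F}) \to \mathcal{G}_{W,g,\psi}(F)$, hence $\liminf \frac1p\sup\widetilde M_p \ge \sup_F\mathcal{G}_{W,g,\psi}(F)$.

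For the matching upper bound I would argue that any near-optimal ${\bf u}^{(p)}$ gives rise, after passing to a subsequence, to a limiting function: form the empirical distribution of triples $(i/p, u^{(p)}_i, \xi_i)$, extract a weak limit, and realize it via a measurable $F(x,z)$ by disintegration (here one uses that conditionally on $x$ the variable $\xi_i$ is asymptotically $N(0,1)$ and independent across the partition, so the limiting coupling between the spatial coordinate and the Gaussian coordinate is a genuine product — this is where one must be careful that the adversary cannot exploit correlations the model does not allow). Lower semicontinuity / continuity of each term of $\mathcal{G}$ along this weak limit (again via the cut-norm convergence for the quadratic term and dominated convergence for the rest, all quantities being uniformly bounded) yields $\limsup \frac1p\widetilde M_p({\bf u}^{(p)}) \le \mathcal{G}_{W,g,\psi}(F) \le \sup_F \mathcal{G}_{W,g,\psi}(F)$. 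Combining with the first stage and the lower bound proves \eqref{eq:var_prob}.

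The main obstacle I anticipate is the quadratic term and the role of the auxiliary Gaussian coordinate $z$: cut-norm convergence of $W_{pA_p}$ to $W$ controls bilinear forms against \emph{fixed} bounded test functions $f,g$, but here the test function $u_i = F(i/p,\xi_i)$ is itself random and the two slots of the quadratic form are evaluated at \emph{independent} Gaussian draws $Z,Z'$ in the limit (note the $F(X,Z')F(X',Z')$ versus a naive $F(X,Z)F(X',Z)$) — justifying this decoupling requires exploiting $\mathrm{tr}(A_p^2)=o(p)$ to show that the "diagonal in the Gaussian coordinate" contribution is negligible, essentially the same second-moment/non-linear-large-deviations mechanism that underlies Theorem \ref{thm:main}. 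Making the extraction of the limiting $F$ rigorous (measurable selection, the product structure of the limiting coupling, and upgrading subsequential limits to a full limit using uniqueness of the value $\sup_F\mathcal G$) is the other delicate point; I would handle it by a compactness argument in a suitable weak topology on $\mathscr{F}$ combined with the continuity estimates above, possibly discretizing both coordinates into finitely many pieces and sending the mesh to zero at the end.
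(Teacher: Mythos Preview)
Your two-stage strategy (reduce to $\widetilde M_p$ via Lemma~\ref{lem:off_diagonal}, then pass to the limit via the empirical measure of triples $(i/p,\xi_i,u_i)$) is exactly the paper's approach, and your identification of $g$ and of the product structure $X\perp Z$ in the limit is correct. There is, however, a genuine gap in the upper-bound step. After extracting a subsequential weak limit $\nu$ of these empirical measures, disintegration with respect to $(X,Z)$ gives you a conditional \emph{law} of $U$ given $(X,Z)$, not a function: nothing forces $U$ to be $\sigma(X,Z)$-measurable in the limit, and your parenthetical about the product structure of $(X,Z)$ addresses the independence of $X$ and $Z$ but not this separate issue. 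The missing idea is a Jensen step: set $F(x,z):=\mathbb{E}_\nu[U\mid X=x,Z=z]$. The quadratic and linear terms of the functional are unchanged under this replacement (they factor through conditional means, since $(X_1,Z_1,U_1)\perp(X_2,Z_2,U_2)$), while the strict convexity of $u\mapsto G(u,d)$ from Lemma~\ref{lemma:G_stability} gives $\mathbb{E}_\nu[G(U,\psi(X)/\sigma^2)]\ge \mathbb{E}[G(F(X,Z),\psi(X)/\sigma^2)]$, hence $\tilde{\mathcal{G}}_{W,g,\psi}(\nu)\le \mathcal{G}_{W,g,\psi}(F)$. The paper isolates exactly this as Lemma~\ref{lemma:sup_equiv}.

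There is a second, smaller gap in the lower bound. You invoke ``continuity of $u\mapsto G(u,d)$ on the compact $[-1,1]$'', but $G(\pm 1,d)=\dkl(\pi_{\pm\infty}\,\|\,\pi_{(0,d)})$ can be infinite (for instance whenever $\pi$ has a density), so $G$ is only lower semicontinuous in $u$. If the optimizer $F$ hits values arbitrarily close to $\pm1$ on a set of positive measure, the naive plug-in $u_i=F(i/p,\xi_i)$ may make $\frac1p\sum_i G(u_i,d_i)$ blow up and fail to track $\mathbb{E}[G(F(X,Z),\psi(X)/\sigma^2)]$. The paper handles this by a truncation: set $\tilde u_i=F(V_i,\xi_i)$ when $F(V_i,\xi_i)\ge -1+\delta$ and $\tilde u_i=\dot c(0,d_i^K)$ otherwise, then send $\delta\to 0$ and $K\to\infty$ after $p\to\infty$ (Lemma~\ref{lemma:lower_bound_argument}). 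With these two fixes your outline coincides with the paper's proof.
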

\noindent
Lemma \ref{lem:off_diagonal} and Theorem \ref{thm:var_conv_new} are established in Section \ref{subsec:thm2_proof}.  


\subsection{Uniqueness of the optimizer} 
Theorem \ref{thm:main} identifies general conditions for the asymptotic tightness of the naive mean-field lower bound to the log-normalizing constant. The Gibbs Variational Principle \cite{wainwright2008graphical} establishes that under these settings, the distribution $\mu$ can be approximated, to the leading order, by a product distribution. However, this does not specify whether the ``best" approximation is unique. Indeed, the ferromagnetic Ising model on the complete graph, henceforth referred to as the Curie-Weiss model, provides a classical example where the mean-field lower bound is tight, but without an external magnetization, the model has two distinct optimizers at ``low temperature". 

In this section, we identify some conditions which guarantee the uniqueness of the minimizers. From a statistical perspective, these conditions allow us to conclude that the posterior distribution roughly behaves like a product distribution. To this end, our next lemma identifies a set of sufficient conditions. 
\begin{definition}\label{def:ln}
Let $(\xi_1,\ldots,\xi_p)\stackrel{i.i.d.}{\sim}N(0,1)$ be as constructed in Lemma \ref{lem:off_diagonal}. For any $\mathbf{u} \in [-1,1]^p$, define a random empirical measure $L_p^{(\mathbf{u})} $ on $\R^3$ by setting
$$L_p^{(\mathbf{u})} :=\frac{1}{p} \sum_{i=1}^{p} \delta_{(\frac{i}{p}, \xi_i, u_i)}.$$
\end{definition}
\begin{theorem}
\label{thm:pure_state} 
Suppose all assumptions of Theorem \ref{thm:var_conv_new} hold. Assume further that there exists ${\bf u}_p^*\in [-1,1]^p$ such that for all $\varepsilon>0$ there exists $\delta'>0$ such that 
\begin{align}\label{eq:separate}
\mathbb{P}\Big[\sup_{{\bf u}:\lVert {\bf u}-{\bf u}^*_p\rVert_2^2>p\varepsilon}\frac{1}{p}\{M_p({\bf u})-M_p({\bf u}^*_p)\}<-\delta' | \mathbf{X} \Big] = 1- o(1).
\end{align}
Then the following conclusions hold. 
\begin{itemize} 
\item[(i)]  The limiting variational problem \eqref{eq:var_prob} has a unique optimizer $F^* \in \mathscr{F}$. 
\item[(ii)] $L_p^{(\mathbf{u}_p^*)}\stackrel{P}{\to}  \mu^*,$
 where $\mu^*$ is the law of $(X,Z,F^*(X,Z))$ with $X\sim U[0,1]$ and $Z\sim N(0,1)$ mutually independent. 
 \item[(iii)]  $F^*$ satisfies the fixed point equation:
\begin{align}\label{eq:rde}
F^*(x,z) \stackrel{a.s.}{=} \dot{c}\Big( \frac{1}{\sigma^2} \Big( - \mathbb{E}[W(x, X) F^*(X,Z)] +g(x) + \sqrt{\psi(x)} z  \Big), \frac{\psi(x)}{\sigma^2} \Big), 
\end{align}
where $X\sim U[0,1]$ and $Z\sim N(0,1)$ are mutually independent.
\end{itemize} 

\end{theorem}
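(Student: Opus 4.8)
The plan is to leverage Theorem \ref{thm:var_conv_new}, which already gives convergence of the normalized mean-field optimum to $\sup_F \mathcal{G}_{W,g,\psi}(F)$, together with the separation hypothesis \eqref{eq:separate} and the representation Lemma \ref{lem:off_diagonal} which allows us to replace $M_p$ by $\widetilde{M}_p$ up to $o(p)$. The key conceptual device is to recognize that the vectors $\mathbf{u}\in[-1,1]^p$ are encoded, via Definition \ref{def:ln}, into empirical measures $L_p^{(\mathbf{u})}$ on $[0,1]\times\mathbb{R}\times[-1,1]$, and that $\widetilde{M}_p(\mathbf{u})/p$ is (asymptotically, using (i), (ii), (iii) of Theorem \ref{thm:var_conv_new} and tr$(A_p^2)=o(p)$ to kill the quadratic-form cross terms) a continuous functional of $L_p^{(\mathbf{u})}$ that matches $\mathcal{G}_{W,g,\psi}$ when the third marginal of the measure is expressed as $F(X,Z)$ for a measurable kernel $F$. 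So the finite-$p$ problem $\sup_\mathbf{u} M_p(\mathbf{u})/p$ Gamma-converges (in the topology of weak convergence of $L_p^{(\mathbf{u})}$) to $\sup_{F\in\mathscr{F}}\mathcal{G}_{W,g,\psi}(F)$.

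The steps, in order: First, establish that the limiting functional $\mathcal{G}_{W,g,\psi}$ attains its supremum on $\mathscr{F}$ — compactness of $\mathscr{F}$ in a suitable weak topology (bounded measurable functions into $[-1,1]$, identified via their action on $L^2([0,1]\times\mathbb{R},\mathrm{d}x\times\gamma)$) plus upper semicontinuity of $\mathcal{G}$, where the quadratic term is continuous because $W\in L^1$ acts compactly and $G(\cdot,\cdot)$ is lower semicontinuous and bounded below (using Lemma \ref{lemma:G_stability} and Definition \ref{def:G}). Second, derive the Euler–Lagrange / fixed-point equation \eqref{eq:rde}: at an optimizer $F^*$, perturbing $F^*(x,z)$ pointwise and using $\partial G/\partial u(u,d)=h(u,d)$ from Lemma \ref{lemma:G_stability}, stationarity forces $h(F^*(x,z),\psi(x)/\sigma^2)=\frac{1}{\sigma^2}(-\mathbb{E}[W(x,X)F^*(X,Z)]+g(x)+\sqrt{\psi(x)}z)$ a.s.; applying $\dot c(\cdot,\psi(x)/\sigma^2)$ to both sides and invoking the defining identity $\dot c(h(t,d),d)=t$ from Lemma \ref{lemma:properties}(ii) yields \eqref{eq:rde}. (One must check the boundary cases $F^*\in\{-1,1\}$ separately, but Lemma \ref{lemma:properties}(i)--(ii) show $h\to\pm\infty$ there, consistent with the formula.) Third, prove uniqueness of $F^*$: this is where strict concavity must be extracted. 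The entropy-type term $\mathbb{E}[G(F(X,Z),\psi(X)/\sigma^2)]$ is strictly convex in $F$ in the $u$-variable since $\partial^2 G/\partial u^2 = 1/\ddot c(h(u,d),d)>0$ (Lemma \ref{lemma:G_stability}), so $-\mathbb{E}[G(\cdot)]$ is strictly concave; the separation condition \eqref{eq:separate} is precisely what rules out the possibility that the (possibly non-concave) quadratic term $-\frac{1}{2}\mathbb{E}[W F F]$ creates a second maximizer far from $F^*$ — combined with strict concavity of the entropy piece near any maximizer, this pins down $F^*$ uniquely. Concretely, I would show: if $F_1^*,F_2^*$ were two optimizers, then $\mathbf{u}_p^{(1)},\mathbf{u}_p^{(2)}$ achieving them are both within $o(\sqrt{p})$ of any near-maximizer of $M_p$, hence within $o(\sqrt p)$ of $\mathbf{u}_p^*$ by \eqref{eq:separate}, forcing $L_p^{(\mathbf{u}_p^{(1)})}$ and $L_p^{(\mathbf{u}_p^{(2)})}$ to the same limit, i.e. $F_1^* = F_2^*$ a.e. Fourth, conclude (ii): any sequence $\mathbf{u}_p^*$ satisfying \eqref{eq:separate} is an $o(p)$-approximate maximizer of $M_p$ (otherwise the true maximizer would violate the separation bound), so $L_p^{(\mathbf{u}_p^*)}$ converges weakly along subsequences to a maximizing measure, which by Steps 1–3 must be $\mu^*$, the law of $(X,Z,F^*(X,Z))$; since the limit is unique the whole sequence converges, and the randomness is handled by Lemma \ref{lem:off_diagonal} providing the $\xi_i$ and by the $\stackrel{P|\mathbf{X}}{\to}$ statements.

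The main obstacle will be Step 3, the uniqueness argument, specifically transferring the finite-$p$ separation hypothesis \eqref{eq:separate} into a statement about the limiting functional without losing information in the $\ell_2$-to-weak-topology passage: $\|\mathbf{u}-\mathbf{u}_p^*\|_2^2 > p\varepsilon$ is an $L^2$ (strong) separation, whereas the Gamma-convergence naturally lives in the weak topology of empirical measures, and I will need to argue that on the relevant compact set the weak topology and the $L^2$-metric induce comparable notions of "closeness to the optimizer" — this is where uniform integrability of $\frac{1}{p}\sum\delta_{D_p(i,i)}$ and the structure of $L_p^{(\mathbf{u})}$ (the $u_i$ live in the compact $[-1,1]$) are used to upgrade weak convergence of the third marginals to $L^2$-convergence. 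A secondary technical point is justifying the interchange of $\sup$ and the limit with the randomness from $\varepsilon$ and $\bm\beta_0$, i.e. making the $\stackrel{P|\mathbf{X}}{\to}$ in \eqref{eq:var_prob} and in Lemma \ref{lem:off_diagonal} interact cleanly with the almost-sure separation event in \eqref{eq:separate}; this is routine but requires care with the order of quantifiers.
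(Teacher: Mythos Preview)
Your proposal is largely correct and aligns with the paper on the central idea for part~(i): lift two hypothetical optimizers $F_1^*,F_2^*$ to finite-$p$ near-maximizers $\mathbf{u}_p^{(1)},\mathbf{u}_p^{(2)}$ via the recipe \eqref{eq:u_defn} of Lemma~\ref{lemma:lower_bound_argument}, invoke \eqref{eq:separate} to force both within $\sqrt{p\varepsilon}$ of $\mathbf{u}_p^*$ (hence of each other), and deduce $\mathbb{E}[(F_1-F_2)^2]\le 2\varepsilon$. The paper proves part~(ii) constructively (showing $\tilde{\mathbf{u}}_{p,\delta,K}$ built from $F^*$ is $\ell_2$-close to $\mathbf{u}_p^*$ and then applying LLN to $\tilde{L}_p^{(\tilde{\mathbf{u}}_{p,\delta,K})}$) rather than via your compactness-plus-uniqueness route, but both work.

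The genuine divergence is part~(iii). You propose a direct Euler--Lagrange computation at the limiting level: perturb $F^*$, differentiate $\mathcal{G}_{W,g,\psi}$, use $\partial G/\partial u=h$ and $\dot c(h(t,d),d)=t$. The paper instead takes a global maximizer $\hat{\mathbf{u}}_p$ of $\widetilde{M}_p$, shows $\hat{\mathbf{u}}_p\in(-1,1)^p$ (since $h\to\pm\infty$ at the boundary), writes the finite-$p$ fixed point $\hat{\mathbf{u}}_p=\dot c\big(\sigma^{-2}(-A_p\hat{\mathbf{u}}_p+A_p\bbeta_0+D_p\bbeta_0+\mathrm{diag}(\sqrt{D_p(i,i)})\bm\xi),\mathbf{d}\big)$, and passes to the limit using the stability estimates of Lemma~\ref{lem:map_cont} for the map $m(\nu,W,x)=\mathbb{E}_\nu[W(x,X)U]$. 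This finite-to-infinite route sidesteps the boundary case you flag: since $\dot c$ takes values in $(-1,1)$, the limiting equation automatically forces $F^*\in(-1,1)$ a.s. Your variational approach is valid in principle but requires extra work to justify differentiability of the $W$-quadratic term for $W\in L^1$ and to handle the constraint $F\in[-1,1]$ rigorously.

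Your ``main obstacle'' is slightly misdiagnosed. No weak-to-$\ell_2$ topology upgrade is needed: the explicit construction $\tilde u_i=F(V_i,\xi_i)$ gives $\frac{1}{p}\|\mathbf{u}^{(1)}-\mathbf{u}^{(2)}\|_2^2\to\mathbb{E}[(F_1(X,Z)-F_2(X,Z))^2]$ directly by LLN (modulo the $\delta,K$ truncations), so the $\ell_2$ separation in \eqref{eq:separate} translates to an $L^2$ statement about $F_1,F_2$ with no topology gymnastics.
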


Combining Theorem \ref{thm:main} and Theorem \ref{thm:pure_state}, we get the following corollary, which deduces a Law of Large numbers under the posterior distribution.

\begin{corollary}\label{cor:eg}
Suppose \eqref{eq:weak}, \eqref{eq:separate}, and the three conditions (a), (b), (c) of Theorem \ref{thm:var_conv_new} hold.  Then the optimization in Theorem \ref{thm:var_conv_new} has a unique solution $F^*\in \mathscr{F}$ that satisfies
$$F^*(x,z)\stackrel{a.s.}{=}\dot{c}\Big( \frac{1}{\sigma^2} \Big( - \mathbb{E}[W(x, X) F(X,Z)] +g(x) + \sqrt{\psi(x)} z  \Big), \frac{\psi(x)}{\sigma^2} \Big), 
$$
 Further, for any continuous function $\zeta:[-1,1]\times [0,1]\mapsto \R$ we have
$$\mu_{\mathbf{y}, \mathbf{X}}\Big(  \Big| \frac{1}{p}\sum_{i=1}^p \zeta(\beta_i, \beta_{0,i}) - \int_{[-1,1] \times [0,1]} \zeta(w,x) d\pi_{ h \big(F^*(x,z),\frac{\psi(x)}{\sigma^2} \big),\frac{\psi(x)}{\sigma^2}}(w) dx  \Big| > \varepsilon\Big)  \stackrel{P|\mathbf{X}}{\to} 0.$$

\end{corollary}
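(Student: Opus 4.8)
The plan is to combine the uniqueness package of Theorem~\ref{thm:pure_state} with the posterior-concentration statement of Theorem~\ref{thm:main}(iii). Part (i) is essentially immediate: the hypotheses assumed in Corollary~\ref{cor:eg} --- namely conditions (i)--(iii) of Theorem~\ref{thm:var_conv_new} together with the separation condition \eqref{eq:separate} --- are exactly those of Theorem~\ref{thm:pure_state}, and parts (i) and (iii) of that theorem give precisely the uniqueness of $F^*$ and the asserted fixed point equation. So the substance of the argument lies in the law of large numbers of part (ii).

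For part (ii), the first step is to feed \eqref{eq:separate} into Theorem~\ref{thm:main}(iii). I would verify that \eqref{eq:separate} forces the deterministic hypothesis \eqref{eq:unique_mf} with $\hat{\mathbf{u}}=\mathbf{u}_p^*$: on the event appearing in \eqref{eq:separate}, and using $R_p=\sup_{\mathbf{u}\in[-1,1]^p}M_p(\mathbf{u})\ge M_p(\mathbf{u}_p^*)$, the bound $\frac{1}{p}\{M_p(\mathbf{u})-M_p(\mathbf{u}_p^*)\}<-\delta'$ valid on $\lVert\mathbf{u}-\mathbf{u}_p^*\rVert_2^2>p\varepsilon$ upgrades to $\frac{1}{p}[\sup_{\lVert\mathbf{u}-\mathbf{u}_p^*\rVert_2^2>p\varepsilon}M_p(\mathbf{u})-R_p]<-\delta'$. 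The remaining hypothesis of Theorem~\ref{thm:main}(iii), uniform integrability of $\frac{1}{p}\sum_i\delta_{D_p(i,i)}$, follows from condition (ii) of Theorem~\ref{thm:var_conv_new}: $w_{\mathbf{D}}\to\psi$ in $L^1$, an $L^1$-convergent sequence of functions is uniformly integrable, and that empirical measure is the law of $w_{\mathbf{D}}$ under Lebesgue measure; the conditions \eqref{eq:mean_field} and \eqref{eq:weak} needed by Theorem~\ref{thm:main} are among the standing assumptions. Applying Theorem~\ref{thm:main}(iii), conditionally on $\mathbf{X}$ on the high-probability event of \eqref{eq:separate} --- and upgrading the ``good for all large $p$'' consequence to an in-probability conclusion by the standard subsequence/conditional-Borel--Cantelli argument, diagonalizing over a countable sequence of tolerances $\eta_m\downarrow 0$ --- yields that, under $\mu_{\mathbf{y},\mathbf{X}}$, the average $\frac{1}{p}\sum_i\zeta(\beta_i,\cdot)$ concentrates around the product-measure centering $\frac{1}{p}\sum_i\int_{[-1,1]}\zeta(z,\cdot)\,d\pi_{(h(u_{p,i}^*,d_i),d_i)}(z)$, with $d_i=D_p(i,i)/\sigma^2$ and with the second coordinate of $\zeta$ being the index $i/p$ (and/or $\beta_{0,i}$ as written in the statement, tracked below via hypothesis (ii)).

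The second step is to identify the limit of this random centering with the asserted integral against $\pi_{(h(F^*(x,z),\psi(x)/\sigma^2),\psi(x)/\sigma^2)}$. Here I would invoke Theorem~\ref{thm:pure_state}(ii): $\frac{1}{p}\sum_i\delta_{(i/p,\xi_i,u_{p,i}^*)}$ converges in probability to $\mu^*$, the law of $(X,Z,F^*(X,Z))$. Combined with the $L^1$-limits $w_{\mathbf{D}}\to\psi$ and $w_{\bbeta_0}\to\phi$ --- which identify $d_i$ and $\beta_{0,i}$ as block-constant functions of $i/p$ converging in $L^1$ --- this gives convergence of the joint empirical distribution of $(i/p,\xi_i,u_{p,i}^*,d_i,\beta_{0,i})$. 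To pass $\zeta$ and the tilting map through the limit one needs continuity of $(u,d)\mapsto\int_{[-1,1]}\zeta(z,\cdot)\,d\pi_{(h(u,d),d)}(z)$; on $(-1,1)\times\mathbb{R}$ this is clear from smoothness of $c$ and $h$, and it extends continuously to $u=\pm1$ because, by Lemma~\ref{lemma:properties}, $h(u,d)\to\pm\infty$ there and $\pi_{(h(u,d),d)}$ converges weakly to the point mass at $\pm1$; the unbounded range of the $d$-argument is controlled by truncating at a large level and using the uniform integrability above. This produces the claimed identity (with the $N(0,1)$ variable $z$ on the right understood to be averaged out, consistently with $\mu^*$).

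The step I expect to be the main obstacle is the joint-convergence bookkeeping of the last paragraph. Theorem~\ref{thm:pure_state}(ii) controls $(i/p,\xi_i,u_{p,i}^*)$ only weakly, whereas Theorem~\ref{thm:var_conv_new}(ii) controls $d_i$ and $\beta_{0,i}$ only in $L^1$ --- with $\psi,\phi$ merely integrable one cannot pretend that $d_i\approx\psi(i/p)/\sigma^2$ pointwise. The safe route is to never leave $L^1$ for these coordinates: approximate $\zeta$ and the tilting functional by maps that are Lipschitz in the $d$- (and $\beta_0$-) argument, bound the resulting discrepancy by $\frac{1}{p}\sum_i\big|d_i-(\text{block average of }\psi/\sigma^2)\big|+\frac{1}{p}\sum_i\big|\beta_{0,i}-(\text{block average of }\phi)\big|=o(1)$, and only then apply weak convergence for the genuinely continuous coordinates $(i/p,\xi_i,u_{p,i}^*)$. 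The other care points --- transferring the conditional-on-$\mathbf{X}$ hypothesis \eqref{eq:separate} into the deterministic input of Theorem~\ref{thm:main}(iii), and the boundary behaviour at $u=\pm1$ --- are routine given the subsequence argument and Lemma~\ref{lemma:properties}.
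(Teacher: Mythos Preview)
Your proposal is correct and follows the same route the paper intends: the paper does not give a separate proof of Corollary~\ref{cor:eg} but simply states it as an immediate consequence of combining Theorem~\ref{thm:main} and Theorem~\ref{thm:pure_state}, and your writeup supplies exactly the bookkeeping needed to effect that combination (upgrading \eqref{eq:separate} to the input \eqref{eq:unique_mf} of Theorem~\ref{thm:main}(iii) on a high-probability event, extracting uniform integrability of $\frac{1}{p}\sum_i\delta_{D_p(i,i)}$ from the $L^1$ convergence $w_{\mathbf D}\to\psi$, and using Theorem~\ref{thm:pure_state}(ii) together with the $L^1$ limits of $w_{\mathbf D}$ and $w_{\bbeta_0}$ to identify the centering). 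The care points you flag---passing from the probabilistic hypothesis \eqref{eq:separate} to the deterministic \eqref{eq:unique_mf}, and handling the merely $L^1$-convergent coordinates by Lipschitz approximation and truncation---are exactly the right issues, and your resolutions are sound.
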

\begin{remark}
To use this corollary in concrete examples, one needs to compute the functions $(W,g,\psi)$, and verify the conditions \eqref{eq:mean_field}, \eqref{eq:weak} and \eqref{eq:separate}. Of these, the separation condition \eqref{eq:separate} is somewhat implicit, while the other two conditions are relatively easy to verify directly. The following lemma provides two sufficient conditions to this end. In particular, it shows that \eqref{eq:separate} holds in the so called \enquote{high temperate regime}, or if $\pi$ has a density (with respect to Lebesgue measure) which is log concave.
\end{remark}
\begin{lemma}
\label{lemma:pure_state} 
\begin{enumerate}
\item[(i)]
Suppose there exists $\lambda>0$ and ${\bf u}_p^*\in [-1,1]^p$ such that for all ${\bf u}\in [-1,1]^p$ we have
\begin{align}\label{eq:ps}
\mathbb{P}\Big[ M_p({\bf u}^*)-M_p({\bf u})\ge \lambda \lVert {\bf u}-{\bf u}^*\rVert_2^2  \Big| \mathbf{X} \Big] = 1- o(1),
\end{align}
for some $\lambda>0$, free of ${\bf u}$ and $p$. 
Then the condition \eqref{eq:separate} in Theorem \ref{thm:pure_state} holds.

\item[(ii)]
In particular \eqref{eq:ps} holds under either of the following conditions:

\begin{itemize}
\item[(a)]
$$
\limsup_{p \to \infty} \sup_{1\le i\le p} \sum_{j\ne i} |A_p(i,j)| <\sigma^2. \nonumber
$$
\item[(b)] Let the prior $\pi$ have a density with respect to Lebesgue measure on $[-1,1]$, 
\begin{align}
\frac{\mathrm{d}\pi}{\mathrm{d}x} = \frac{1}{Z} \exp(-V(x)), \nonumber 
\end{align}
where $V$ is even, $V:[0,1] \to \mathbb{R}$ is increasing, and $V'$ is convex on $[0,1)$. Further, we assume that $\liminf_{p \to \infty} \lambda_{\min}(\mathbf{X}^{\mathrm{T}}\mathbf{X}) >0$. 
\end{itemize}
\end{enumerate}
\end{lemma}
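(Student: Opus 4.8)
## Proof Plan for Lemma~\ref{lemma:pure_state}

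The plan is to establish the three claims in sequence, starting from the weakest hypothesis. For part (i), suppose \eqref{eq:ps} holds. Fix $\varepsilon > 0$ and set $\delta' = \lambda\varepsilon$. On the event (of probability $1 - o(1)$) where the bound in \eqref{eq:ps} holds simultaneously for all $\mathbf{u}$ — here I would note that since $M_p$ is a polynomial of bounded degree in finitely many variables, the "$1 - o(1)$" event can be taken uniform in $\mathbf{u}$, e.g. by taking the event to be the one on which the random quantity $\mathbf{z} = \mathbf{X}^{\mathrm{T}}\mathbf{y}$ (equivalently the Gaussian vector $(\xi_i)$ from Lemma~\ref{lem:off_diagonal}) is well-behaved, which is where the randomness enters — we get for every $\mathbf{u}$ with $\|\mathbf{u} - \mathbf{u}_p^*\|_2^2 > p\varepsilon$ that $M_p(\mathbf{u}) - M_p(\mathbf{u}_p^*) \le -\lambda\|\mathbf{u} - \mathbf{u}_p^*\|_2^2 < -\lambda p \varepsilon$, hence dividing by $p$ gives \eqref{eq:separate}. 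The only subtlety is the exchange of "for all $\mathbf{u}$" with the probability statement; I would handle it by making \eqref{eq:ps}'s good event not depend on $\mathbf{u}$ in the first place (inspecting the proof of that sub-claim in part (ii)).

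For part (ii)(a): the strategy is to show strong concavity of $M_p$ on $[-1,1]^p$ with a uniform constant. Write $M_p(\mathbf{u}) = -\frac{1}{2\sigma^2}[\mathbf{u}^{\mathrm{T}} A_p \mathbf{u} - 2\mathbf{z}^{\mathrm{T}}\mathbf{u}] - \sum_i G(u_i, d_i)$. By Lemma~\ref{lemma:G_stability}, $\partial^2 G/\partial u^2 = 1/\ddot c(h(u,d),d) > 0$; moreover since $z \in [-1,1]$ the variance $\ddot c$ is at most $1$, so $\partial^2 G/\partial u^2 \ge 1$. Thus the Hessian of $\sum_i G(u_i,d_i)$ dominates the identity, while the Hessian of the quadratic part is $-A_p/\sigma^2$. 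Hence $-\nabla^2 M_p \succeq I - \|A_p\|_{\mathrm{op}}/\sigma^2 \cdot I$ — but this would need $\|A_p\|_{\mathrm{op}} < \sigma^2$, which is stronger than the stated row-sum condition. The fix is to use the row-sum (Gershgorin-type) bound directly rather than the operator norm: $\mathbf{v}^{\mathrm{T}} A_p \mathbf{v} \le \sum_{i}\sum_{j\ne i}|A_p(i,j)|\,|v_i||v_j| \le \frac{1}{2}\sum_i\sum_{j\ne i}|A_p(i,j)|(v_i^2 + v_j^2) \le (\sup_i \sum_{j\ne i}|A_p(i,j)|)\|\mathbf{v}\|_2^2$ using symmetry of $A_p$. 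So $-\nabla^2 M_p(\mathbf{u}) \succeq (1 - \frac{1}{\sigma^2}\sup_i\sum_{j\ne i}|A_p(i,j)|) I$, and under (a) this is $\succeq 2\lambda I$ for some $\lambda > 0$ and all large $p$. Strong concavity with parameter $2\lambda$ plus the fact that $\mathbf{u}_p^* := \arg\max M_p$ is the unique maximizer then yields $M_p(\mathbf{u}^*) - M_p(\mathbf{u}) \ge \lambda\|\mathbf{u} - \mathbf{u}^*\|_2^2$ deterministically, so \eqref{eq:ps} holds (with a degenerate "good event").

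For part (ii)(b): the log-concave prior case. Here I would pass to the "tilted" representation. Each $\pi_i = \pi_{(0,d_i)}$ has density proportional to $\exp(-V(x) - \frac{d_i}{2}x^2)$ on $[-1,1]$; since $V$ is even with $V'$ convex and increasing on $[0,1)$, the function $V(x) + \frac{d_i}{2}x^2$ is convex, so $\pi_i$ is log-concave. The cleanest route is to go back to the variational characterization: $M_p(\mathbf{u}) = \sup_{Q=\prod Q_i}[\cdots]$ evaluated along the product measures $\pi_{(h(u_i,d_i),d_i)}$, and to relate the Hessian of $\sum_i G(u_i,d_i) = \sum_i \dkl(\pi_{(h(u_i,d_i),d_i)} \| \pi_i)$ in the mean-parameter $u$ to the inverse Fisher information. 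The key inequality is that for a log-concave measure on an interval, the Poincaré/Brascamp–Lieb bound controls the variance $\ddot c$ from above by something involving $\lambda_{\min}(\mathbf{X}^{\mathrm{T}}\mathbf{X})$; combined with $\liminf \lambda_{\min}(\mathbf{X}^{\mathrm{T}}\mathbf{X}) > 0$ this again gives a uniform lower bound on $-\nabla^2 M_p$ after accounting for the full matrix $\mathbf{X}^{\mathrm{T}}\mathbf{X} = A_p + D_p$ (not just $A_p$) — the point being that in this parametrization the diagonal $D_p$ contributes positively and the relevant object is $\mathbf{X}^{\mathrm{T}}\mathbf{X}$ as a whole, whose smallest eigenvalue is bounded below.

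The main obstacle I anticipate is part (ii)(b): making precise the interplay between the log-concavity of $\pi$, the Brascamp–Lieb inequality bounding variances of one-dimensional log-concave tilts, and the curvature of $M_p$ in the mean parametrization — in particular getting the $\lambda_{\min}(\mathbf{X}^{\mathrm{T}}\mathbf{X})$ (rather than $\lambda_{\min}(A_p)$, which could be very negative) to appear, which requires working in the right parametrization where the diagonal part $D_p$ is absorbed into the base measures $\pi_i$ and the curvature of $-G$ in $u$ is $1/\ddot c(h(u,d),d)$ with $\ddot c$ the variance of the $d$-tilted log-concave measure. Parts (i) and (ii)(a) I expect to be essentially routine given Lemmas~\ref{lemma:properties} and~\ref{lemma:G_stability}.
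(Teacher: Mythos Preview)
Your plan for parts (i) and (ii)(a) matches the paper's argument essentially verbatim: the paper also computes the Hessian $H_p=-\frac{1}{\sigma^2}A_p-\Delta_p$ with $\Delta_p(i,i)=1/\ddot c(h(u_i,d_i),d_i)\ge 1$, bounds the $A_p$ contribution by its maximum absolute row sum, and then invokes a strong-concavity lemma (the paper's Lemma~\ref{lem:concave}) to get the quadratic separation and hence \eqref{eq:ps}. Your remark about the uniformity-in-$\mathbf{u}$ issue in part (i) is well taken; the paper's one-line proof glosses over it, but since in both (ii)(a) and (ii)(b) the Hessian bound is deterministic, the ``good event'' is indeed independent of $\mathbf{u}$, exactly as you propose.

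For part (ii)(b) you take a genuinely different route from the paper. The paper does \emph{not} argue via log-concavity and Brascamp--Lieb. Instead it (1) invokes the GHS inequality of Ellis--Newman (which uses precisely the hypotheses: $V$ even, $V$ increasing on $[0,1]$, $V'$ convex on $[0,1)$) to reduce $\ddot c(h(u,d),d)\le \ddot c(0,d)$; (2) uses a monotone-likelihood-ratio comparison in the family $\propto e^{-\theta V(z)-dz^2/2}$ to bound $\ddot c(0,d)$ by the variance under $\theta=0$, i.e.\ a truncated Gaussian on $[-1,1]$; and (3) computes that truncated-Gaussian variance explicitly as $<1/d$. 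This yields $\Delta_p(i,i)\ge d_i$, whence $-H_p\succeq \frac{1}{\sigma^2}(A_p+D_p)=\frac{1}{\sigma^2}\mathbf{X}^{\mathrm T}\mathbf{X}$.

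Your Brascamp--Lieb approach also works, and is arguably more direct, but you should make explicit a deduction you slurred over. You wrote ``$V'$ convex and increasing on $[0,1)$'', yet the hypothesis says only that $V$ (not $V'$) is increasing. The missing step: evenness gives $V'(0)=0$; $V$ increasing gives $V'\ge 0$ on $(0,1)$; hence the right derivative of the convex function $V'$ at $0$ satisfies $(V')'_+(0)=\lim_{h\downarrow 0}V'(h)/h\ge 0$; convexity of $V'$ then forces $V''\ge 0$ on $(0,1)$, and by evenness on all of $(-1,1)$. So $V$ is indeed convex, the tilted potential has second derivative $\ge d$, and Brascamp--Lieb yields $\ddot c(\gamma_1,d)\le 1/d$ for \emph{all} $\gamma_1$ in one stroke, bypassing both GHS and the truncated-Gaussian computation. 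Either route lands on the same key inequality $\Delta_p(i,i)\ge d_i$ and the same conclusion.
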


\noindent
We collect the proofs of Theorem \ref{thm:pure_state} and Corollary \ref{lemma:pure_state} in Section \ref{subsec:proof_thm3}.


 
\subsection{Applications} 
To illustrate the utility of Theorem \ref{thm:var_conv_new} and Corollary \ref{cor:eg}, we apply our results to specific examples in this section. The proofs are deferred to Appendix \ref{sec:example_proofs}. 

\noindent 
\textbf{Spiked Covariance Matrix:} We consider linear regression with mean-zero gaussian features. We assume a spike covariance structure \cite{johnstone2009consistency} on the features. 
\begin{corollary}\label{cor:reg2}
Suppose that the $i^{th}$ row of the design matrix ${\mathbf X}$ equals $n^{-1/2}  {\mathbf x}_i$, where $\{\mathbf{x}_i\}_{1\le i\le n}\stackrel{i.i.d.}{\sim} N(0,\Gamma_p)$, where $\Gamma_p={\bf I}+{\bf v}{\bf v}'$, with $v_i:=\frac{1}{\sqrt{p}} G(i/p)$, and $G:[0,1]\mapsto \R$ is continuous almost surely.
%
%
%
%
%
Further assume that the true regression coefficient $\bbeta_0$ satisfies
$w_{\bbeta_0}\stackrel{L^1}{\to} \phi$, for some $\phi\in L^1[0,1]$.

\begin{enumerate}
\item[(a)]
Then for any $\varepsilon>0$, 
 $$  \frac{1}{p}\log Z_p({\bf y},{\bf X}) \stackrel{P{| \mathbf{X}}}{\longrightarrow}  \sup_{F\in \mathcal{F}}\mathcal{G}_{W, g,\psi}(F)    ,$$
 where $$W(x,y)=G(x) G(y) \quad  \psi(x)=1,\quad 
g(x)=G(x)\int_{[0,1]}G(y)\phi(y)dy+\phi(x).$$

\item[(b)]
Consider the following two cases: either (i) $\lambda<\sigma^2$, or (ii) the conditions of Lemma \ref{lemma:pure_state} Part (b) (ii) hold. 
%
%
%
%
%
 Then the conclusions of Corollary \ref{cor:eg} hold.

\end{enumerate}
\end{corollary}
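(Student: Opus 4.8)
\textbf{Proof proposal for Corollary \ref{cor:reg2}.}

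The plan is to verify the hypotheses of Theorem \ref{thm:var_conv_new} and Corollary \ref{cor:eg} directly, by computing the limiting objects $(W, g, \psi)$ attached to the spiked design, and then invoking those general results. First I would analyze $\mathbf{X}^{\mathrm{T}}\mathbf{X}$. With rows $n^{-1/2}\mathbf{x}_i$ and $\mathbf{x}_i \stackrel{iid}{\sim} N(0,\Gamma_p)$, $\Gamma_p = \mathbf{I} + \mathbf{v}\mathbf{v}'$, a standard concentration argument (Gaussian covariance estimation with $p = o(n)$) gives $\mathbf{X}^{\mathrm{T}}\mathbf{X} \approx \Gamma_p$ with high probability in a norm strong enough for our purposes; more carefully, the off-diagonal part $A_p$ should concentrate around $\Gamma_{p,\mathrm{off}} = \mathbf{v}\mathbf{v}' - \mathrm{diag}(\mathbf{v}\mathbf{v}')$, and the diagonal $D_p$ around $\mathbf{I} + \mathrm{diag}(\mathbf{v}\mathbf{v}')$, i.e. $D_p(i,i) \to 1 = \psi$ since $v_i^2 = G(i/p)^2/p \to 0$. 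The key point is that $v_i = p^{-1/2}G(i/p)$ makes $\mathbf{v}\mathbf{v}'$ a rank-one matrix whose associated graphon is exactly $W(x,y) = G(x)G(y)$: one checks $W_{pA_p}(x,y) = p\,A_p(\lceil px\rceil, \lceil py\rceil) \approx p\, v_i v_j = G(i/p)G(j/p)$, so $d_\square(W_{pA_p}, W) \to 0$. Also $\mathrm{tr}(A_p^2) \approx \mathrm{tr}(\Gamma_{p,\mathrm{off}}^2) \le \|\mathbf{v}\|_2^4 = (\frac{1}{p}\sum G(i/p)^2)^2 = O(1) = o(p)$, giving \eqref{eq:mean_field}; and $\|\Gamma_p\|_2 = 1 + \|\mathbf{v}\|_2^2 = O(1)$ gives \eqref{eq:weak}. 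This lets me apply Theorem \ref{thm:main}, Lemma \ref{lem:off_diagonal} (noting $\sum D_p(i,i) = O(p)$), and then Theorem \ref{thm:var_conv_new}.

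For part (a), with $w_{\bbeta_0} \to \phi$ and $w_{\mathbf{D}} \to \psi \equiv 1$, Theorem \ref{thm:var_conv_new} applies verbatim once I identify $g$. The prescription there is $g(x) = \int_{[0,1]} W(x,y)\phi(y)\,dy + \psi(x)\phi(x) = G(x)\int_{[0,1]} G(y)\phi(y)\,dy + \phi(x)$, which is exactly the claimed formula. The remaining task is to pass from $\frac{1}{p}\sup_{\mathbf{u}} M_p(\mathbf{u})$ to $\frac{1}{p}\log Z_p$; this follows from Theorem \ref{thm:main}(i), which gives $\log Z_p - R_p = o(p)$ with $R_p = \sup_{\mathbf{u}} M_p(\mathbf{u})$, so the two scaled quantities have the same probabilistic limit. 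Combining with \eqref{eq:var_prob} yields the stated convergence $\frac{1}{p}\log Z_p \stackrel{P|\mathbf{X}}{\to} \sup_{F \in \mathscr{F}} \mathcal{G}_{W,g,\psi}(F)$.

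For part (b), I need the separation condition \eqref{eq:separate}, which by Lemma \ref{lemma:pure_state} follows from either sufficient condition. In case (i), $\lambda < \sigma^2$: since $\sum_{j \ne i}|A_p(i,j)| \approx \sum_{j \ne i} |v_i v_j| = |v_i|(\|\mathbf{v}\|_1 - |v_i|) \le |v_i|\|\mathbf{v}\|_1$, and $\|\mathbf{v}\|_1 = \frac{1}{\sqrt p}\sum |G(i/p)| \approx \sqrt{p}\int|G|$ while $|v_i| = O(p^{-1/2})$, this product is $O(\int|G|) $ uniformly in $i$; I would need the sharp constant to be bounded by $\lambda$, hence the hypothesis $\lambda < \sigma^2$ is what controls $\limsup_p \sup_i \sum_{j\ne i}|A_p(i,j)| < \sigma^2$, invoking Lemma \ref{lemma:pure_state}(ii)(a). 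In case (ii), the log-concavity condition on $\pi$ together with $\liminf \lambda_{\min}(\mathbf{X}^{\mathrm{T}}\mathbf{X}) > 0$ is assumed to hold; the latter is plausible here since $\mathbf{X}^{\mathrm{T}}\mathbf{X} \approx \mathbf{I} + \mathbf{v}\mathbf{v}' \succeq \mathbf{I}$, so $\lambda_{\min} \to 1 > 0$ with high probability. Either way Lemma \ref{lemma:pure_state} delivers \eqref{eq:separate}, Corollary \ref{cor:eg} applies, and the fixed-point characterization of $F^*$ plus the law of large numbers under $\mu_{\mathbf{y},\mathbf{X}}$ follow immediately with the computed $(W,g,\psi)$.

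The main obstacle I anticipate is the concentration step: showing that $A_p$, $D_p$ and the relevant quadratic forms (and the cut-norm distance $d_\square(W_{pA_p}, W)$) concentrate around their deterministic limits \emph{uniformly enough}, conditionally on $\mathbf{X}$, given only $p = o(n)$ rather than a stronger rate. Controlling $\mathrm{tr}(A_p^2)$ and the cut norm requires care because these are quadratic in the Gaussian entries; I would use Hanson–Wright / Gaussian concentration for the trace, and for the cut norm exploit that $A_p - \Gamma_{p,\mathrm{off}}$ has small spectral norm (which dominates cut norm up to the $1/p$ scaling built into $W_{pA_p}$). The bookkeeping of the $\sqrt{p/n}$-type fluctuations of the Wishart matrix around $\Gamma_p$, and verifying they are $o(p)$ after the relevant scaling, is the technical heart; everything else is a direct substitution into the general theorems.
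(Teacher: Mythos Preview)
Your plan for part (a) is essentially the paper's: concentrate $\mathbf{X}^{\mathrm T}\mathbf{X}$ around $\Gamma_p$ in operator norm (the paper invokes a Vershynin-type bound $\|\mathbf{X}^{\mathrm T}\mathbf{X}-\Gamma_p\|_2=O_P(\sqrt{p/n})=o_P(1)$), deduce $w_{\mathbf D}\stackrel{L^1}{\to}1$ and $d_\square(W_{pA_p},W)\to 0$, check $\mathrm{tr}(A_p^2)=o(p)$ and $\|A_p\|_2=O_P(1)$, and then apply Theorem~\ref{thm:main} together with Theorem~\ref{thm:var_conv_new}. Part (b)(ii) also matches.

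For part (b)(i) there is a genuine gap. You want to invoke Lemma~\ref{lemma:pure_state}(ii)(a) directly for the \emph{random} $A_p$, via $\sum_{j\ne i}|A_p(i,j)|\approx\sum_{j\ne i}|v_iv_j|$. That approximation is not available under the sole hypothesis $p=o(n)$: the operator-norm estimate $\|A_p-\Gamma_{p,\mathrm{off}}\|_2=o_P(1)$ yields only
\[
\sum_{j\ne i}\bigl|A_p(i,j)-\Gamma_p(i,j)\bigr|\le\sqrt{p}\,\|A_p-\Gamma_{p,\mathrm{off}}\|_2=O_P\!\bigl(p/\sqrt{n}\bigr),
\]
which can diverge (take e.g.\ $p=n^{3/4}$); entrywise the fluctuations $A_p(i,j)-\Gamma_p(i,j)$ are of order $n^{-1/2}$, so their $\ell_1$ row sum is typically of order $p/\sqrt{n}$ as well. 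Hence you cannot conclude $\limsup_p\sup_i\sum_{j\ne i}|A_p(i,j)|<\sigma^2$ from $\lambda<\sigma^2$.

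The paper avoids this by never applying Lemma~\ref{lemma:pure_state} to the random $A_p$. It introduces an auxiliary functional $\overline{M}_p$ in which the quadratic form is built from the \emph{deterministic} $\Gamma_p$, checks the row-sum hypothesis $\max_i\sum_{j\ne i}\Gamma_p(i,j)\le\lambda<\sigma^2$ there, obtains the strong separation \eqref{eq:separate} for $\overline{M}_p$, and only then transfers to $M_p$ via the uniform quadratic-form bound
\[
\sup_{\mathbf u\in[-1,1]^p}\bigl|\mathbf u^{\mathrm T}(A_p-\Gamma_{p,\mathrm{off}})\mathbf u\bigr|\le p\,\|A_p-\Gamma_{p,\mathrm{off}}\|_2=o_P(p),
\]
which \emph{does} follow from operator-norm concentration. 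This indirection through a deterministic surrogate is the missing step in your argument.
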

The next example re-visits the sparse bernoulli design setting introduced in Corollary \ref{cor:erdos1}. 
\begin{corollary}\label{cor:erdos2}

Suppose that the $(i,j)^{th}$ entry of the design matrix ${\mathbf X}$ equals $\sqrt{\frac{p}{n}} B(i,j)$, where $\{B(i,j)\}_{1\le i\le n,1\le j\le p}$ are mutually independent Bernoulli random variables, with
$\P(B(i,j)=1)=\frac{1}{p}G\Big(i/n,j/p\Big)$, where $G$ is a function on $[0,1]^2$ which is continuous almost surely.
%
%
%
%
%
%
Further assume that the true regression coefficient $\bbeta_0$ satisfies
$w_{\bbeta_0}\stackrel{L^1}{\to} \phi$, for some function $\phi\in L^1[0,1]$.

\begin{enumerate}
\item[(a)]
Then
 $$\frac{1}{p}\log Z_p({\bf y},{\bf X})\stackrel{P}{\to}\sup_{F\in \mathcal{F}}\mathcal{G}_{W, g,\psi}(F),$$
 where
 \begin{align*}
 W(x,y)=\int_{[0,1]} G(t,x) G(t,y) dt,\quad \psi(x)=\int_{[0,1]}G(t,x)dt,\quad
g(x)=\int_{[0,1]}W(x,y) \phi(y)dy+\phi(x)\psi(x).
\end{align*}

\item[(b)]
Consider the following two cases:

\begin{enumerate}
\item[(i)]
Set $$S(x):=\int_{[0,1]}W(x,y)dy=\int_{[0,1^2]} G(t,x) G(t,y) dt dy.$$
 Suppose $\sigma^2>{\rm ess} \,  \sup S(X)$, where $X\sim U[0,1]$;

\item[(ii)]
the conditions of Lemma \ref{lemma:pure_state} Part (b) (ii) hold.

\end{enumerate}

\noindent 
 Then the conclusions of Corollary \ref{cor:eg} hold.

%

\end{enumerate}
\end{corollary}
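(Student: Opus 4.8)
The overall plan for part~(a) is to feed the model into Corollary~\ref{cor:erdos1} to get validity of the naive mean-field approximation, and then verify the three hypotheses of Theorem~\ref{thm:var_conv_new}, identifying the limiting triple $(W,\psi,g)$ along the way. Since $G$ is a.s.\ continuous on the compact square $[0,1]^2$ it is bounded, say $|G|\le\Lambda$; hence $\mathbb{P}(B(i,j)=1)=\tfrac1p G(i/n,j/p)\le\Lambda/p$, and (as in Corollary~\ref{cor:erdos1}, under $p=o(n)$) the conclusion of Theorem~\ref{thm:main} holds, so in particular $\tfrac1p\log Z_p=\tfrac1pR_p+o(1)$. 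It therefore suffices to check conditions (i)--(iii) of Theorem~\ref{thm:var_conv_new}, after which the prescribed $g$ is exactly the one produced there, and also to record \eqref{eq:weak}, which follows from $\|A_p\|_{\mathrm{op}}\le\max_i\sum_{j\ne i}A_p(i,j)=O(1)$ w.h.p.\ (see below).

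Two of the conditions are routine. For $j\ne k$, $A_p(j,k)=\tfrac pn\sum_{l=1}^nB(l,j)B(l,k)$, so $\mathbb{E}[pA_p(j,k)]=\tfrac1n\sum_l G(l/n,j/p)G(l/n,k/p)$, a Riemann sum converging, for a.e.\ $(x,y)$, to $W(x,y)=\int_0^1 G(t,x)G(t,y)\,\mathrm{d}t$; likewise $D_p(j,j)=\tfrac pn\sum_l B(l,j)$ has mean $\to\psi(x)=\int_0^1 G(t,x)\,\mathrm{d}t$ and variance $O(p/n)=o(1)$. Combining the coordinatewise $L^2$-concentration of $D_p(j,j)$ with bounded convergence of the means gives $\mathbb{E}\bigl[\tfrac1p\sum_j|D_p(j,j)-\psi(j/p)|\bigr]\to0$, hence $w_{\mathbf D}\stackrel{L^1}{\to}\psi$ in probability; with the assumed $w_{\bbeta_0}\stackrel{L^1}{\to}\phi$ this is (ii). For (iii), $\mathbb{E}[\mathrm{tr}(A_p^2)]=\sum_{j\ne k}\mathbb{E}[A_p(j,k)^2]=O\bigl(p^2(n^{-1}+p^{-2})\bigr)=o(p)$ because $p=o(n)$, so $\mathrm{tr}(A_p^2)=o(p)$ w.h.p.\ by Markov's inequality.

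The crux is condition~(i), that $d_\square(W_{pA_p},W)\to0$ in probability. I would split $pA_p=\mathbb{E}[pA_p]+\bigl(pA_p-\mathbb{E}[pA_p]\bigr)$. The deterministic graphon $W_{\mathbb{E}[pA_p]}$ is bounded by $\Lambda^2$ and converges to $W$ pointwise a.e.\ by the Riemann-sum computation above, hence in $L^1$, hence in cut norm. It then remains to show the centered part has vanishing cut norm, i.e.\ $\sup_{I,J\subseteq[p]}\bigl|\sum_{i\in I,\,j\in J}(A_p(i,j)-\mathbb{E}A_p(i,j))\bigr|=o(p)$ w.h.p. Fixing $I,J$, the inner sum equals $\tfrac pn\sum_{l=1}^n\bigl[(\sum_{i\in I}B(l,i))(\sum_{j\in J}B(l,j))-\sum_{i\in I\cap J}B(l,i)-(\mathrm{mean})\bigr]$, a sum of $n$ independent terms with summed variance $O(p^2/n)$. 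After truncating the (sparse) row sums of $B$, a Bernstein bound combined with a union bound over the $4^p$ choices of $(I,J)$ yields the claim; this cut-norm concentration estimate for the sparse random matrix $A_p$ around its mean is the main technical obstacle, and is where the dilution $p=o(n)$ enters. Once (i)--(iii) hold, Theorem~\ref{thm:var_conv_new} gives $\tfrac1p\sup_{\mathbf u}M_p(\mathbf u)\stackrel{P|\mathbf X}{\to}\sup_{F\in\mathscr F}\mathcal{G}_{W,g,\psi}(F)$; combining with $\tfrac1p\log Z_p=\tfrac1pR_p+o(1)$ and integrating out the high-probability event on $\mathbf X$ gives the unconditional convergence in~(a).

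For part~(b) I would verify the separation condition \eqref{eq:separate} via Lemma~\ref{lemma:pure_state}. In case~(i), since $A_p\ge0$ entrywise, $\sum_{j\ne i}|A_p(i,j)|=\sum_{j\ne i}A_p(i,j)$ has mean $\tfrac1{np}\sum_l G(l/n,i/p)\sum_{j\ne i}G(l/n,j/p)\to\int_0^1 W(i/p,y)\,\mathrm{d}y=S(i/p)$ and variance $O(p/n)=o(1)$; a Bernstein bound and a union bound over the $p$ rows then give $\limsup_p\sup_i\sum_{j\ne i}|A_p(i,j)|=\mathrm{ess\,sup}\,S(X)<\sigma^2$, so Lemma~\ref{lemma:pure_state}(ii)(a) applies (this also supplies $\|A_p\|_{\mathrm{op}}=O(1)$, hence \eqref{eq:weak}). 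In case~(ii) the prior is of the prescribed log-concave form by hypothesis, and it remains to check $\liminf_p\lambda_{\min}(\mathbf X^{\mathrm T}\mathbf X)>0$: writing $\mathbf X^{\mathrm T}\mathbf X=\mathbb{E}[\mathbf X^{\mathrm T}\mathbf X]+E$, the mean is $\mathrm{diag}(\psi(i/p))+o(1)$ plus $\tfrac1p$ times the positive semidefinite Gram matrix $(W(i/p,j/p))_{i,j}$ off its diagonal, so $\lambda_{\min}(\mathbb{E}[\mathbf X^{\mathrm T}\mathbf X])\ge\inf\psi-o(1)$, while $\|E\|_{\mathrm{op}}=o(1)$ w.h.p.\ by matrix concentration; hence Lemma~\ref{lemma:pure_state}(ii)(b) applies. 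In either case \eqref{eq:separate} holds, and since \eqref{eq:weak} and conditions (a)--(c) of Theorem~\ref{thm:var_conv_new} were established in part~(a), Corollary~\ref{cor:eg} applies verbatim, yielding the stated law of large numbers under the posterior.
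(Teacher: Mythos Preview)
Your proposal for part (a) follows the same architecture as the paper: verify the three hypotheses of Theorem~\ref{thm:var_conv_new}, with the cut-norm convergence $d_\square(W_{pA_p},W)\to 0$ as the crux. The paper packages the concentration step as a separate lemma (Lemma~\ref{lemma:concentration}), reducing via polarization to the diagonal case $S=T$ and then using sub-Weibull tail bounds for $\sum_l Y_l(S)^2$ rather than your Bernstein-after-truncation sketch. Both are viable; note that the paper's bound $2^p e^{-C\sqrt{n}}$ actually requires $p=o(\sqrt{n})$ to beat the union bound, a condition invoked in the proof though not flagged in the statement.

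For part (b)(i) your route genuinely differs. You propose to show $\sup_i R_i<\sigma^2$ directly via Bernstein plus a union bound over $p$ rows, then invoke Lemma~\ref{lemma:pure_state}(ii)(a). This is natural but demands uniform-in-$i$ concentration of $R_i$ around $S(i/p)$, which after truncation needs something like $p(\log n)^2=o(n)$ to survive the union bound. The paper avoids uniform control altogether: it truncates $A_p$ to the submatrix $B_{p,\delta}$ on rows and columns with $R_i\le\sigma^2(1-\delta)$, bounds the quadratic-form discrepancy by $\frac{2}{p}\sum_i R_i 1\{R_i>\sigma^2(1-\delta)\}$, and shows this vanishes using only the \emph{weak convergence} of $\frac1p\sum_i\delta_{R_i}$ to the law of $S(X)$, which follows from cut-metric convergence of $W_{pA_p}$ via \cite[Theorem~2.16]{borgs2015consistent}. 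This trades pointwise concentration for an averaged statement, and applies Lemma~\ref{lemma:pure_state}(ii)(a) to the modified functional $\overline{M}_p$ built from $B_{p,\delta}$ rather than to $M_p$ itself.

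For (b)(ii) you have a minor misreading: $\liminf_p\lambda_{\min}(\mathbf X^{\mathrm T}\mathbf X)>0$ is part of the hypothesis ``the conditions of Lemma~\ref{lemma:pure_state} Part (b)(ii) hold'', not something to verify. The paper simply cites Lemma~\ref{lemma:pure_state}(ii)(b) and Corollary~\ref{cor:eg}.
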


\noindent
As our last example, we consider a sequence of design matrices arising in the study of two-way ANOVA designs. 
\begin{corollary}\label{cor:det2}

Suppose that we have a two factor ANOVA model of the form
\begin{align*}
y_{ij}=\frac{1}{\sqrt{p}}(\tau_i+\gamma_j)+\xi_{ij}, 1\le i,j\le \tilde{p}
\end{align*}
Here $(\tau_1,\ldots,\tau_{\tilde{p}})\in [-1,1]^{\tilde{p}}$ are the levels of the first factor, and $(\gamma_1,\ldots,\gamma_{\tilde{p}})\in [-1,1]^{\tilde{p}}$ are the levels of the second factor, and $\xi_{ij}\stackrel{i.i.d.}{\sim}N(0,\sigma^2)$. Setting $n=\tilde{p}^2$ and $p=2\tilde{p}$, let ${\bf y}\in \mathbb{R}^n$ denote the vector obtained by linearizing the matrix $((y_{ij}))$ row-wise, and $\mathbf{X}$ denote the corresponding $n\times p$ design matrix, and let $\bbeta:=(\tau,\gamma)\in [-1,1]^p$ be the unknown parameter.

Assume that the true regression coefficient $\bbeta_0\in [-1,1]^p$ satisfies
$w_{\bbeta_0}\stackrel{L^1}{\to} \phi$, for some function $\phi\in L^1[0,1]$.

\begin{enumerate}
\item[(a)]
Then
 $$\frac{1}{p}\log Z_p({\bf y},{\bf X})\stackrel{P}{\to}\sup_{F\in \mathcal{F}}\mathcal{G}_{W, g,\psi}(F),$$
 where $\psi = \frac{1}{2}$, 
 \begin{align*}
W(x,y):=&0 \text{ if }(x,y)\in [0,.5)^2 \cup (.5,1]^2,\\
=&1\text{ if }(x,y)\in [0,.5)\times (.5,1]\cup  (.5,1]\times [0,.5).
\end{align*}
and 
$g(x)=\int_{[0,1]}W(x,y) \phi(y)dy+\frac{1}{2}\phi(x)$.

\item[(b)]
Consider the following two cases:

\begin{enumerate}
\item[(i)]
$\sigma^2>\frac{1}{2}$.

\item[(ii)]
the conditions of Lemma \ref{lemma:pure_state} Part (b) (ii) hold.

\end{enumerate}
\noindent
 Then the conclusions of Corollary \ref{cor:eg} hold.

%

\end{enumerate}
\end{corollary}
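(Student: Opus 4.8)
The goal is to prove Corollary \ref{cor:det2}, which specializes Theorem \ref{thm:var_conv_new} and Corollary \ref{cor:eg} to the two-way ANOVA design. The plan is to first work out the structure of the design matrix $\mathbf{X}$ explicitly, then verify the hypotheses of Theorem \ref{thm:var_conv_new} and Corollary \ref{cor:eg}, and finally identify the limiting objects $(W, g, \psi)$.

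\textbf{Step 1: Structure of $\mathbf{X}^{\mathrm{T}}\mathbf{X}$.} In the ANOVA model, the parameter vector is $\bbeta = (\tau, \gamma) \in \R^p$ with $p = 2\tilde p$, and the $(ij)$-th observation (for $1 \le i, j \le \tilde p$) has feature vector picking out $\tau_i$ and $\gamma_j$, each scaled by $1/\sqrt p$. So the $n \times p$ matrix $\mathbf{X}$ (with $n = \tilde p^2$) has rows with exactly two nonzero entries, both equal to $1/\sqrt p$: one in the block of $\tau$-coordinates (position $i$) and one in the block of $\gamma$-coordinates (position $\tilde p + j$). Computing $\mathbf{X}^{\mathrm{T}}\mathbf{X}$: the $\tau$--$\tau$ block is $\frac{1}{p}\sum_{i,j} e_i e_i^{\mathrm T} = \frac{\tilde p}{p} I_{\tilde p}$ (diagonal); the $\gamma$--$\gamma$ block is likewise $\frac{\tilde p}{p} I_{\tilde p}$; and the $\tau$--$\gamma$ cross block is $\frac{1}{p}\sum_{i,j} e_i e_j^{\mathrm T} = \frac{1}{p} \mathbf{1}\mathbf{1}^{\mathrm T}$, the all-ones $\tilde p \times \tilde p$ matrix scaled by $1/p$. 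Hence $D_p = \frac{\tilde p}{p} I_p = \frac12 I_p$ (since $\tilde p / p = 1/2$), giving $d_i = \frac{1}{2\sigma^2}$ and $\psi(x) \equiv \frac12$ after rescaling by $\sigma^2$ per the convention in Definition \ref{def:limit_prob}; and $A_p$ is the off-diagonal part, which is exactly the anti-diagonal block structure with all entries $1/p$ in the cross blocks. So $W_{pA_p}(x,y)$ equals $1$ when $x,y$ lie in opposite halves of $[0,1]$ and $0$ otherwise, which is precisely the claimed $W$; convergence in cut norm (indeed equality for each $p$) is immediate.

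\textbf{Step 2: Verifying the hypotheses.} I would check (iii) ${\rm tr}(A_p^2) = o(p)$: each entry of $A_p$ is $\le 1/p$ and $A_p$ has at most $2\tilde p^2 = np$... wait, more carefully, $A_p$ has $2\tilde p^2$ nonzero entries each equal to $1/p = 1/(2\tilde p)$, so ${\rm tr}(A_p^2) = \sum_{i,j} A_p(i,j)^2 = 2\tilde p^2 \cdot \frac{1}{4\tilde p^2} = \frac12 = o(p)$ — alternatively note $A_p = \frac1p \begin{psmallmatrix} 0 & \mathbf{1}\mathbf{1}^{\mathrm T} \\ \mathbf{1}\mathbf{1}^{\mathrm T} & 0 \end{psmallmatrix}$ has rank $2$ with eigenvalues $\pm \tilde p / p = \pm\frac12$, so ${\rm tr}(A_p^2) = \frac12$. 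Condition \eqref{eq:weak} holds since $\|A_p\|_2 = \frac12 = O(1)$; condition \eqref{eq:mean_field} is ${\rm tr}(A_p^2) = o(p)$, just checked. The diagonal $D_p(i,i) = \frac12$ is bounded, hence $\frac1p\sum_i D_p(i,i) = \frac12 = O(p)$ and the empirical measure $\frac1p\sum_i \delta_{D_p(i,i)} = \delta_{1/2}$ is trivially uniformly integrable, so Corollary \ref{cor:det1} / the hypotheses of Theorem \ref{thm:main} hold. Hypothesis (ii) of Theorem \ref{thm:var_conv_new} asks $w_{\bbeta_0} \to \phi$ in $L^1$ (assumed) and $w_{\mathbf D} \to \psi$ in $L^1$, which holds with $\psi \equiv \frac12$ since $\mathbf{D}$ is constant. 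Then I compute $g$ from the formula $g(x) = \int W(x,y)\phi(y)\,\mathrm{d}y + \psi(x)\phi(x) = \int W(x,y)\phi(y)\,\mathrm{d}y + \frac12\phi(x)$, matching the claim. Part (a) then follows directly from Theorem \ref{thm:var_conv_new}, together with the observation that $\frac1p \log Z_p - \frac1p R_p \to 0$ from Theorem \ref{thm:main}(i), so the log-normalizing constant has the same limit as $\frac1p \sup_{\bf u} M_p({\bf u})$.

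\textbf{Step 3: Part (b), the separation condition.} For case (ii), the conditions of Lemma \ref{lemma:pure_state} Part (b)(ii) are assumed directly — I just need $\liminf_p \lambda_{\min}(\mathbf{X}^{\mathrm T}\mathbf{X}) > 0$, which follows since $\mathbf{X}^{\mathrm T}\mathbf{X} = \frac12 I_p + A_p$ has smallest eigenvalue $\ge \frac12 - \|A_p\|_2 = 0$... so I need to be a little careful: actually $\mathbf{X}^{\mathrm T}\mathbf{X} = \frac12 I + A_p$ where $A_p$ has eigenvalues $\{0,\dots,0,\frac12,-\frac12\}$, so $\lambda_{\min}(\mathbf{X}^{\mathrm T}\mathbf{X}) = \frac12 - \frac12 = 0$, which is a problem. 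I would instead note this is really a statement about the model after absorbing the diagonal — or reconcile with the fact that Lemma \ref{lemma:pure_state}(b) is stated with that hypothesis and the corollary simply invokes it, so any gap here is inherited, not introduced; more likely the intended reading uses a slightly different normalization under which $\lambda_{\min}$ stays bounded away from $0$, or the kernel direction is harmless because $V$ convexity handles it. For case (i), $\sigma^2 > \frac12$: I would apply Lemma \ref{lemma:pure_state} Part (ii)(a), checking $\limsup_p \sup_i \sum_{j \ne i} |A_p(i,j)| < \sigma^2$; here $\sum_{j\ne i}|A_p(i,j)| = \tilde p \cdot \frac1p = \frac12$ for every $i$ (each row of $A_p$ has $\tilde p$ entries of size $1/p$ in the opposite block), so the condition becomes $\frac12 < \sigma^2$, exactly the stated hypothesis. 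Then Lemma \ref{lemma:pure_state}(i) gives \eqref{eq:ps} hence \eqref{eq:separate}, and Corollary \ref{cor:eg} applies verbatim.

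\textbf{Main obstacle.} The genuinely nontrivial parts are bookkeeping: correctly setting up the $n \times p$ design from the linearization convention and confirming that the piecewise-constant graphon $W_{pA_p}$ is \emph{exactly} the claimed $W$ (so cut-norm convergence is free), and correctly evaluating the row sums of $A_p$ as $\frac12$ so that case (i) reduces to $\sigma^2 > \frac12$. The only conceptual friction is the $\lambda_{\min}(\mathbf{X}^{\mathrm T}\mathbf{X})$ issue in case (b)(ii) noted above, which I would resolve by appealing to whatever normalization convention makes Lemma \ref{lemma:pure_state}(b) applicable (e.g. stating it for $\mathbf{X}^{\mathrm T}\mathbf{X}$ restricted to the relevant subspace, or noting the null direction of $A_p$ contributes no interaction). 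Everything else is a direct substitution into Theorems \ref{thm:var_conv_new} and Corollary \ref{cor:eg}.
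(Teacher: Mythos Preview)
Your proposal is correct and follows essentially the same route as the paper: compute $\mathbf{X}^{\mathrm T}\mathbf{X}=\tfrac12 I_p+A_p$ with $A_p(i,j)=\tfrac1p$ on the off-diagonal $\tau$--$\gamma$ blocks, check that $W_{pA_p}$ equals the claimed bipartite graphon $W$ and $w_{\mathbf D}\equiv\tfrac12$, invoke Theorem~\ref{thm:var_conv_new} for part (a), and for part (b)(i) read off $\max_i\sum_{j\ne i}|A_p(i,j)|=\tfrac12$ to apply Lemma~\ref{lemma:pure_state}(ii)(a). The paper's proof is the same, only more terse.

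Your observation in Step~3 that $\lambda_{\min}(\mathbf{X}^{\mathrm T}\mathbf{X})=0$ for this design (the null direction being $(\mathbf 1,-\mathbf 1)$) is correct and is sharper than what the paper records: the paper's proof of part (b) simply says the result ``is immediate from Corollary~\ref{cor:eg} and Lemma~\ref{lemma:pure_state}'' without commenting on case (ii). Since the corollary's hypothesis in case (ii) is precisely that the conditions of Lemma~\ref{lemma:pure_state}(b)(ii) hold, the statement remains formally true---but your remark shows that, for this particular design, that hypothesis is never met and case (ii) is vacuous. This is a genuine (if minor) wrinkle in the paper's formulation rather than a defect in your argument.
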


\section{Discussions} 
\label{sec:discussions} 
We discuss some limitations of our current results, and collect some questions for future enquiry. 
\begin{itemize}
\item[(i)] The bounded support assumption on the prior---A vital technical assumption in our analysis concerns the bounded support assumption on the prior. We note that for a general prior with unbounded support, the posterior $\mu_{\mathbf{y}, \mathbf{X}}$ might not even be a proper probability distribution. One intuitively expects that under appropriate ``tail-decay" conditions on the prior, the results in this paper should generalize. Going beyond the bounded support assumption requires extending the theory of non-linear large deviations to probability measures on unbounded spaces, and is thus beyond the scope of this paper. 

\item[(ii)] Extensions to Gibbs posteriors and fractional posteriors---A careful study of our proof reveals that our main results do not depend strongly on the correct model specification. As a result, we expect similar techniques to be broadly useful in the study of Gibbs and fractional posteriors  \cite{alquier2016properties,yang2020alpha}. 

\item[(iii)] Extensions to other GLMs---Another natural question of interest concerns the applicability of these ideas to more general models, e.g. logistic regression. We consider this to be an extremely interesting question, and plan to explore this in the future. 

\item[(iv)] Extensions to models with latent characteristics---Variational methods are ubiquitous in applications with latent characteristics e.g. topic modeling, community detection etc. In contrast, the relevant variables are all observed in the linear model framework. It will be interesting to explore the applicability of our techniques to models with latent features. 
\end{itemize} 

\section{Proofs} 
\label{sec:proofs} 
We prove the main results in this section. Theorem \ref{thm:main} is established in Section \ref{subsec:proof_thm1}, Theorem \ref{thm:var_conv_new} is established in Section \ref{subsec:thm2_proof}, while Theorem \ref{thm:pure_state} is proved in Section \ref{subsec:proof_thm3}. 

\subsection{Proof of Theorem \ref{thm:main}}
\label{subsec:proof_thm1} 
Our first  lemma collects some basic facts about the exponential family $\psi_{\gamma}$. The proof is deferred to the Appendix. 

\begin{lemma}
\label{lemma:elementary} 
In the setting of Lemma \ref{lemma:properties}, we have the following conclusions. 
\begin{itemize}
\item[(i)]  Set $\gamma= (\gamma_1, \gamma_2) \in \mathbb{R}^2$. The derivatives of $c(\gamma_1,\gamma_2)$ with respect to $\gamma_1$ are given by 
\[\dot{c}(\gamma):=\frac{\partial c(\gamma_1,\gamma_2)}{\partial \gamma_1}=\int_{[-1,1]} z \mathrm{d}\pi_{\gamma}(z),\quad \ddot{c}(\gamma):=\frac{\partial^2 c(\gamma_1,\gamma_2)}{\partial \gamma_1^2}=\int_{[-1,1]} (z-\dot{c}(\gamma))^2 \mathrm{d}\pi_\gamma(z).\]
\item[(ii)] We have, for $\gamma= (\gamma_1, \gamma_2), \tilde{\gamma}= (\tilde{\gamma}_1, \gamma_2) \in \mathbb{R}^2$,  \[ \dkl (\pi_{\gamma} \| \pi_{(0,\gamma_2)})=\gamma_1 \dot{c}(\gamma)-c(\gamma) + c(0,\gamma_2) .\]

\item[(iii)] Consider a sequence $\gamma_k = (\gamma_{1,k}, \gamma_{2,k}) \in \mathbb{R}^2$ such that $\gamma_{1,k} \to \pm\infty$ and $\limsup |\gamma_{2,k}| < \infty$. Then $\pi_{\gamma_k} \stackrel{w}{\to} \pi_{\pm \infty}$, where $\pi_{\pm \infty}$ is the degenerate distribution which puts mass 1 at $\pm 1$. 

\item[(iv)] For any $d>0$, the function $G(\cdot, d): [-1,1] \to \mathbb{R}^+$ is lower semicontinuous. 

\item[(v)] For $x \in [-1,1]$, $y \in \mathbb{R}$ and $r \in \mathbb{N}$,  define 
\begin{align}
H(x,y) = \int_{[-1,1]} z^r \mathrm{d}\pi_{(h(x,y),y)}(z) \label{eq:H_defn} 
\end{align} 
Then we have, 
\begin{align} 
&\sup_{x \in (-1,1), y \in \mathbb{R}} \Big| \frac{\partial H (x,y)}{\partial y}  \Big| \leq 2. \label{eq:H_stability}
\end{align} 
\end{itemize}
\end{lemma}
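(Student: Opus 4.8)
The final statement to prove is Lemma \ref{lemma:elementary}, which collects five elementary facts about the exponential family $\pi_\gamma$ and the functions $c, G, H$. I will address the parts in order, since most follow from standard exponential-family calculus.

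\textbf{Parts (i) and (ii).} For (i), I would differentiate $c(\gamma) = \log \int_{[-1,1]} \exp(\gamma_1 z - \frac{\gamma_2}{2} z^2)\,\mathrm{d}\pi(z)$ under the integral sign with respect to $\gamma_1$; since the support is bounded in $[-1,1]$, dominated convergence justifies the interchange, and the first derivative is $\int z \,\mathrm{d}\pi_\gamma(z)$ while the second is the variance $\int (z-\dot c(\gamma))^2 \,\mathrm{d}\pi_\gamma(z)$ by the usual cumulant computation. For (ii), I would expand $\dkl(\pi_\gamma \| \pi_{(0,\gamma_2)}) = \int \log\frac{\mathrm{d}\pi_\gamma}{\mathrm{d}\pi_{(0,\gamma_2)}} \,\mathrm{d}\pi_\gamma$; using Definition \ref{def:exp}, $\log\frac{\mathrm{d}\pi_\gamma}{\mathrm{d}\pi} - \log\frac{\mathrm{d}\pi_{(0,\gamma_2)}}{\mathrm{d}\pi} = \gamma_1 z - c(\gamma) + c(0,\gamma_2)$ (the $-\frac{\gamma_2}{2}z^2$ terms cancel), so integrating against $\pi_\gamma$ and using part (i) gives $\gamma_1 \dot c(\gamma) - c(\gamma) + c(0,\gamma_2)$.

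\textbf{Part (iii).} This is a weak-convergence statement. I would show that under $\pi_{\gamma_k}$ with $\gamma_{1,k} \to +\infty$ and $|\gamma_{2,k}|$ bounded, the mass concentrates near $1$. Since $\{1\}$ is in the support of $\pi$, for any $\epsilon > 0$ the set $(1-\epsilon, 1]$ has positive $\pi$-mass; comparing the exponential weight $\exp(\gamma_{1,k} z - \frac{\gamma_{2,k}}{2}z^2)$ on $(1-\epsilon,1]$ versus on $[-1, 1-\epsilon]$, the ratio of integrals over the two regions blows up like $e^{c\gamma_{1,k}\epsilon}$ (up to the bounded $\gamma_{2,k}$ contribution), forcing $\pi_{\gamma_k}([-1,1-\epsilon]) \to 0$. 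Hence $\pi_{\gamma_k} \stackrel{w}{\to} \delta_1$. The case $\gamma_{1,k}\to-\infty$ is symmetric.

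\textbf{Parts (iv) and (v).} For lower semicontinuity of $G(\cdot, d)$, I would use Lemma \ref{lemma:properties}(ii) that $h(u,d)$ is well-defined on $(-1,1)$ with $h(u,d)\to\pm\infty$ as $u\to\pm1$, and the formula $G(u,d) = \dkl(\pi_{(h(u,d),d)} \| \pi_{(0,d)})$ from part (ii); continuity of $G$ on $(-1,1)$ follows from continuity of $h$ and $c$, and at the endpoints the claim becomes $\liminf_{u\to\pm1} \dkl(\pi_{(h(u,d),d)}\|\pi_{(0,d)}) \geq \dkl(\pi_{\pm\infty}\|\pi_{(0,d)})$, which follows from part (iii) together with lower semicontinuity of KL divergence under weak convergence (a standard fact). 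For part (v), the key computation: write $H(x,y) = \int z^r \,\mathrm{d}\pi_{(h(x,y),y)}(z)$ and differentiate in $y$, noting $y$ appears both explicitly in $\pi_{(\cdot,y)}$ and through $h(x,y)$. Using that $\dot c(h(x,y),y) = x$ is held fixed, I would implicitly differentiate to get $\frac{\partial h}{\partial y}$, then combine terms. The derivative of $H$ in $y$ decomposes into a covariance-type term against $z^2/2$ (from the explicit $y$-dependence) and a term proportional to $\frac{\partial h}{\partial y}$ times a covariance against $z$; since $|z|\le 1$ and $|z^r|\le 1$ throughout, each covariance is bounded by $1$, and careful bookkeeping yields the bound $2$. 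The main obstacle here — and the one place requiring genuine care rather than rote calculus — is the implicit differentiation giving $\frac{\partial h}{\partial y}$ and verifying the cancellations so the constant is exactly $2$ (or at least an absolute constant); I would compute $\frac{\partial h}{\partial y} = -\frac{1}{\ddot c}\cdot\frac{\partial \dot c}{\partial y}\big|_{\gamma_1}$ and bound the resulting ratio using $|z|\le 1$.
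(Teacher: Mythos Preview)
Your proposal is correct and follows essentially the same approach as the paper. Parts (i)--(iv) match the paper's treatment almost verbatim (the paper dispatches (i), (ii), (iv) by one-line references to standard exponential-family calculus and lower semicontinuity of KL divergence, exactly as you propose). For part (v), the paper does precisely what you outline: it differentiates $H$ in $y$, uses $\dot c(h(x,y),y)=x$ to simplify, and bounds the result by $|\partial h/\partial y|\,\mathbb{E}|Z-x| + 1$; the step you flag as requiring care---bounding the ratio $\partial h/\partial y = -\ddot c^{-1}\,\partial_{\gamma_2}\dot c$---is handled by the factorization $\mathrm{Cov}(Z,Z^2) = \mathbb{E}[(Z-x)^2(Z+x)]$, so the variance in the denominator cancels and the ratio is a weighted average of $(Z+x)/2$, hence bounded by $1$.
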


\noindent
We now turn to the proof of Theorem \ref{thm:main}. To this end, set 
\begin{align}
m_i(\bbeta) := \sum_{j=1}^{p} A_{ij} \beta_j, \,\,\,\,\, \theta_i := \frac{z_i - m_i(\bbeta)}{\sigma^2}, \label{eq:local_fields}
\end{align}
where we recall that $\sigma^2$ denotes the noise variance in our linear regression model \eqref{eq:model}, and $\mathbf{z} = \mathbf{X}^{\mathrm{T}} \mathbf{y}$. Observe that $A_{ii}=0$ for all $1\leq i \leq p$ implies that $\mu(\cdot | (\beta_j)_{j \neq i} ) = \pi_{(\theta_i,d_i)}$, and thus $\mathbb{E}_{\mu}[\beta_i | (\beta_j)_{j \neq i} ] = \dot{c}(\theta_i,d_i)$. We define 
\begin{align}
\mathbf{b} = (\dot{c}(\theta_i,d_i))_{ 1\leq i \leq p}.  \label{eq:b_defn}
\end{align}
We will  prove that  certain statistics under the posterior distribution $\mu$ can be ``well-approximated" by the vector of conditional means. To this end, we establish the following results. 

\begin{lemma}
\label{lemma:taylor1} 
Under the conditions of Theorem \ref{thm:main}, 
setting $f(\bbeta) = -\frac{1}{2\sigma^2} \bbeta^{\mathrm{T}} A \bbeta + \frac{1}{\sigma^2} \mathbf{z}^{\mathrm{T}} \bbeta$, we have,
\begin{align}\label{eq:claim1}
\mathbb{E}_{\mu}\Big[\bbeta^{\mathrm{T}}A\bbeta -{\bf b}^{\mathrm{T}}A{\bf b}\Big]^2&=o(p^2),\\
\label{eq:claim2}\mathbb{E}_{\mu}\Big[\sum_{i=1}^pm_i(\bbeta)(\beta_i-b_i)\Big]^2&=o(p^2),\\
\label{eq:claim3}\mathbb{E}_{\mu}\Big[ f(\bbeta)   - f(\mathbf{b}) -\sum_{i=1}^p \theta_i (\beta_i-b_i)\Big]^2&=o(p^2), 
\end{align}
In the above displays, the random vectors $\mathbf{\btheta}:=(\theta_1,\cdots,\theta_p)^{\mathrm{T}}$ and $\mathbf{b}$ are as defined in \eqref{eq:local_fields} and \eqref{eq:b_defn} respectively. 
\end{lemma}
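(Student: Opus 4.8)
The three claims \eqref{eq:claim1}--\eqref{eq:claim3} are all second-moment estimates under the posterior $\mu$, and the natural engine for all of them is the following observation: for each coordinate $i$, conditionally on $(\beta_j)_{j\neq i}$, $\beta_i$ has law $\pi_{(\theta_i,d_i)}$, so $\E_\mu[\beta_i\mid(\beta_j)_{j\neq i}] = b_i$ and $\mathrm{Var}_\mu(\beta_i\mid(\beta_j)_{j\neq i}) = \ddot c(\theta_i,d_i)\le 1$ (bounded because the support is $[-1,1]$). Hence $\beta_i - b_i$ is, coordinatewise, a bounded martingale-type increment conditionally on the rest. The plan is to expand each bracketed quantity to first order in $(\beta_i-b_i)$, and then control the cross terms via this conditional-centering structure together with the two structural hypotheses \eqref{eq:mean_field} ($\mathrm{tr}(A_p^2)=o(p)$) and \eqref{eq:weak} ($\sup_{\mathbf u}\sum_i|\sum_j A_p(i,j)u_j| = O(p)$).

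\textbf{Proof of \eqref{eq:claim2}, the building block.} Write $\sum_i m_i(\bbeta)(\beta_i-b_i) = \sum_i \sum_j A_{ij}\beta_j(\beta_i-b_i)$. Split the sum into the ``diagonal-in-the-pair'' part and the rest. The key move is to condition on $(\beta_k)_{k\neq i}$ for the term where $\beta_i-b_i$ appears: since $A_{ii}=0$, $m_i(\bbeta)$ does \emph{not} depend on $\beta_i$, so $\E_\mu[m_i(\bbeta)(\beta_i-b_i)] = \E_\mu[m_i(\bbeta)\E_\mu(\beta_i-b_i\mid (\beta_k)_{k\neq i})]=0$; hence the quantity is itself centered. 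For the second moment, expand $\big(\sum_i m_i(\bbeta)(\beta_i-b_i)\big)^2 = \sum_{i,i'} m_i m_{i'}(\beta_i-b_i)(\beta_{i'}-b_{i'})$. When $i\ne i'$, conditioning on everything except $\beta_i$ kills the term unless $m_{i'}$ or $(\beta_{i'}-b_{i'})$ depends on $\beta_i$; the surviving contributions involve one extra factor $A_{ii'}$ (from $m_{i'}$ depending on $\beta_i$) or $A_{i'i}$ (from $b_i$ appearing — no, $b_i$ doesn't depend on $\beta_i$; rather $b_{i'}=\dot c(\theta_{i'},d_{i'})$ depends on $\beta_i$ with a derivative bounded using $|\dot c|\le 1$ and $|\ddot c|\le 1$, picking up $A_{i'i}/\sigma^2$). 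Summing the surviving terms and using $|m_i|\le \sum_j|A_{ij}|$, boundedness of the increments, and $\sum_{i,i'}|A_{ii'}|^2 = \mathrm{tr}(A_p^2) = o(p)$ together with $\sum_i|m_i|\le$ the $O(p)$ bound of \eqref{eq:weak}, yields $o(p^2)$. This is the step I expect to be the main obstacle: bookkeeping which pairs survive the conditioning and extracting exactly the right power of $A_p$ entries so that \eqref{eq:mean_field} and \eqref{eq:weak} combine to give $o(p^2)$ rather than $O(p^2)$.

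\textbf{Proof of \eqref{eq:claim1}.} Write $\bbeta^{\mathrm T}A\bbeta - \mathbf b^{\mathrm T}A\mathbf b = \bbeta^{\mathrm T}A(\bbeta-\mathbf b) + (\bbeta-\mathbf b)^{\mathrm T}A\mathbf b = \sum_i m_i(\bbeta)(\beta_i - b_i) + \sum_i \tilde m_i(\mathbf b)(\beta_i-b_i)$, where $\tilde m_i(\mathbf b) = \sum_j A_{ij}b_j$. The first sum is $o(p)$ in $L^2$ by \eqref{eq:claim2}. For the second, one cannot directly condition to center it, because $\tilde m_i(\mathbf b)$ depends on $\beta_k$ through the $b_k$'s. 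Instead, write $\sum_i\tilde m_i(\mathbf b)(\beta_i - b_i)$; conditioning on $(\beta_k)_{k\neq i}$ makes $\tilde m_i(\mathbf b)$ measurable (it involves only $b_j$, $j\ne i$, since $A_{ii}=0$), so $\E_\mu[\tilde m_i(\mathbf b)(\beta_i-b_i)]=0$ and again the whole sum is centered; its second moment is handled by the same pairwise-conditioning argument as in \eqref{eq:claim2}, now with $|\tilde m_i(\mathbf b)|\le\sum_j|A_{ij}|$ and the $O(p)$ bound from \eqref{eq:weak} applied at $\mathbf u = \mathbf b\in[-1,1]^p$. Adding the two pieces via $(a+b)^2\le 2a^2+2b^2$ gives $o(p^2)$.

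\textbf{Proof of \eqref{eq:claim3}.} Since $f(\bbeta) = -\frac1{2\sigma^2}\bbeta^{\mathrm T}A\bbeta + \frac1{\sigma^2}\mathbf z^{\mathrm T}\bbeta$ is a quadratic, Taylor expansion is exact: $f(\bbeta) - f(\mathbf b) = \nabla f(\mathbf b)^{\mathrm T}(\bbeta-\mathbf b) - \frac1{2\sigma^2}(\bbeta-\mathbf b)^{\mathrm T}A(\bbeta-\mathbf b)$, and $\nabla f(\mathbf b)_i = \frac1{\sigma^2}(z_i - \tilde m_i(\mathbf b))$. Recalling $\theta_i = (z_i - m_i(\bbeta))/\sigma^2$, the linear term $\nabla f(\mathbf b)^{\mathrm T}(\bbeta-\mathbf b) - \sum_i\theta_i(\beta_i-b_i) = \frac1{\sigma^2}\sum_i (m_i(\bbeta) - \tilde m_i(\mathbf b))(\beta_i-b_i) = \frac1{\sigma^2}\sum_i \big(\sum_j A_{ij}(\beta_j - b_j)\big)(\beta_i-b_i) = \frac1{\sigma^2}(\bbeta-\mathbf b)^{\mathrm T}A(\bbeta-\mathbf b)$. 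So the quantity inside \eqref{eq:claim3} equals $-\frac1{2\sigma^2}(\bbeta-\mathbf b)^{\mathrm T}A(\bbeta-\mathbf b)$, and it remains to show $\E_\mu[(\bbeta-\mathbf b)^{\mathrm T}A(\bbeta-\mathbf b)]^2 = o(p^2)$. This is again a pairwise-conditioning estimate: expand into $\sum_{i,j,i',j'}A_{ij}A_{i'j'}(\beta_i-b_i)(\beta_j-b_j)(\beta_{i'}-b_{i'})(\beta_{j'}-b_{j'})$ and use conditional centering to kill all terms except those where the indices pair up, leaving something controlled by $\mathrm{tr}(A_p^2) = o(p)$ (for the ``two-pair'' terms) plus lower-order contributions where the $b$'s bring in extra $A$ factors; boundedness of increments and \eqref{eq:weak} close the estimate at $o(p^2)$. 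Here the one subtlety is that $b_i$ depends on the other coordinates, so when two of the four centered factors share an index through a $b$, one must again differentiate $\dot c$ and pay an extra $A$ entry; this is routine given $|\ddot c|\le 1$.

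\textbf{Main obstacle.} In all three claims the crux is the same combinatorial accounting: after using the conditional-centering identity to reduce the fourth/second moment to a sum over ``coincidence patterns'' of indices, one must verify that every surviving pattern carries enough factors of $A_p$-entries (off-diagonal, hence subject to $\mathrm{tr}(A_p^2)=o(p)$) so that, combined with the $O(p)$ row-sum control of \eqref{eq:weak} and the uniform boundedness of the centered coordinates, the total is $o(p^2)$ and not merely $O(p^2)$. The terms created by the dependence of $\mathbf b$ on the remaining coordinates (via derivatives of $\dot c$) are the ones that require care, but they are strictly lower order because each such dependence contributes an extra $A_p$ entry.
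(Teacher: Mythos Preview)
Your overall approach is correct and is essentially the same machinery the paper relies on, but you are re-deriving from scratch what the paper simply imports: the paper's proof of this lemma cites \cite[(4.40),(4.41)]{yan} for \eqref{eq:claim1} and \eqref{eq:claim2}, and then obtains \eqref{eq:claim3} in one line from the algebraic bound
\[
\Big|f(\bbeta)-f(\mathbf b)-\sum_{i=1}^p\theta_i(\beta_i-b_i)\Big|
\le \frac{1}{\sigma^2}\Big|\sum_{i=1}^p m_i(\bbeta)(\beta_i-b_i)\Big|+\frac{1}{2\sigma^2}\big|\bbeta^{\mathrm T}A\bbeta-\mathbf b^{\mathrm T}A\mathbf b\big|.
\]
Your conditional-centering bookkeeping for \eqref{eq:claim1}--\eqref{eq:claim2} is precisely the technique behind those cited estimates (and is the same device the paper itself uses in its Lemma~\ref{lem:cond_center}), so the content is the same; you are just unpacking the black box.

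One small point worth noting: for \eqref{eq:claim3} you go further than the paper and observe an exact identity rather than a triangle-inequality bound. A direct computation gives
\[
f(\bbeta)-f(\mathbf b)-\sum_{i=1}^p\theta_i(\beta_i-b_i)=\frac{1}{2\sigma^2}(\bbeta-\mathbf b)^{\mathrm T}A(\bbeta-\mathbf b),
\]
with a plus sign (you wrote minus). This is harmless for the second-moment bound, and in fact since $(\bbeta-\mathbf b)^{\mathrm T}A(\bbeta-\mathbf b)=2\sum_i m_i(\bbeta)(\beta_i-b_i)-(\bbeta^{\mathrm T}A\bbeta-\mathbf b^{\mathrm T}A\mathbf b)$, any two of \eqref{eq:claim1}--\eqref{eq:claim3} imply the third, so your route and the paper's are equivalent reorderings of the same three estimates.
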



\begin{lemma}\label{lem:cond_center}
Suppose $\phi:[-1,1]\mapsto \R$ is a bounded measurable function, and $A_p$ satisfies \eqref{eq:mean_field} and \eqref{eq:weak}. Then for any ${\bf c}\in [-1,1]^p$ we have
$$\limsup_{p\to\infty}\frac{1}{p}\E_{\mu} \left[\sum_{i=1}^pc_i\Big\{\phi(\beta_i)-\E_{\mu} [\phi(\beta_i)|\beta_k, k\ne i]\Big\}\right]^2 <\infty.$$
\end{lemma}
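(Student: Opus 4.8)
The plan is to expand the square and exploit the conditional-centering structure. Write $\bar{\phi}_i := \phi(\beta_i) - \E_\mu[\phi(\beta_i)\mid \beta_k, k\neq i]$, so that we must bound $\frac{1}{p}\E_\mu\big[\sum_i c_i \bar\phi_i\big]^2 = \frac{1}{p}\sum_{i,j} c_i c_j \E_\mu[\bar\phi_i \bar\phi_j]$. The key observation is that, by the tower property, $\E_\mu[\bar\phi_i \mid \beta_k, k\neq i] = 0$; hence for the diagonal terms $\E_\mu[\bar\phi_i^2] \le \E_\mu[\phi(\beta_i)^2] \le \|\phi\|_\infty^2$, contributing at most $\|\phi\|_\infty^2 \sum_i c_i^2 = O(p)$ since $c_i\in[-1,1]$. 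The real content is in the off-diagonal terms $\E_\mu[\bar\phi_i\bar\phi_j]$ for $i\neq j$, which would vanish if $\beta_i$ and $\beta_j$ were conditionally independent given the rest; the point is to show they are small \emph{on average} under the mean-field conditions \eqref{eq:mean_field}--\eqref{eq:weak}.

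The main step is a covariance estimate. For $i \neq j$, I would condition on $(\beta_k)_{k\neq i}$ to write $\E_\mu[\bar\phi_i\bar\phi_j] = \E_\mu\big[\bar\phi_j \cdot \E_\mu[\bar\phi_i \mid \beta_k, k\neq i]\big] = 0$ — wait, this is identically zero pointwise only if $\bar\phi_j$ is measurable with respect to $(\beta_k)_{k\neq i}$, which fails precisely when $j$... no: $\bar\phi_j$ depends on $\beta_j$ and on $\E_\mu[\phi(\beta_j)\mid \beta_\ell,\ell\neq j]$, which depends on $\beta_i$. So $\E_\mu[\bar\phi_i\bar\phi_j]$ is genuinely nonzero. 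Instead I would bound it via a perturbation argument: $\E_\mu[\phi(\beta_i)\mid \beta_k,k\neq i] = \int \phi(z)\,\d\pi_{(\theta_i,d_i)}(z)$ with $\theta_i = (z_i - m_i(\bbeta))/\sigma^2$ depending on $\beta_j$ only through the single term $A_{ij}\beta_j/\sigma^2$. Using Lemma \ref{lemma:elementary}(i) (the derivative of such integrals in the natural parameter is a variance-type quantity, hence bounded) together with Lemma \ref{lemma:G_stability} / Lemma \ref{lemma:elementary}(v)-style Lipschitz bounds, the fluctuation of $\E_\mu[\phi(\beta_i)\mid \beta_k,k\neq i]$ as $\beta_j$ varies is $O(|A_{ij}|)$. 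This yields a bound of the form $|\E_\mu[\bar\phi_i\bar\phi_j]| \lesssim |A_{ij}|$ (up to a factor involving second-order terms, which one controls similarly), so that $\sum_{i\neq j}|c_ic_j|\,|\E_\mu[\bar\phi_i\bar\phi_j]| \lesssim \sum_{i\neq j}|A_{ij}| \le \sum_i \big(\sum_j |A_{ij}|^2\big)^{1/2} p^{1/2}$, but this is too lossy.

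The cleaner route, and the one I expect actually works, is to reuse Lemma \ref{lemma:taylor1}: the bounds \eqref{eq:claim1}--\eqref{eq:claim3} are exactly covariance-decoupling estimates of the same flavor, and the argument there (Taylor expansion of $\dot c$ around the conditional-mean vector $\mathbf b$, controlling the remainder by $\mathrm{tr}(A_p^2) = o(p)$) transfers verbatim with $\phi$ in place of the identity. Concretely, I would set $u_i := \E_\mu[\phi(\beta_i)\mid\beta_k,k\neq i]$, note $u_i$ is a smooth ($C^1$, uniformly Lipschitz) function of the local field $\theta_i = (z_i - m_i(\bbeta))/\sigma^2$, and expand $\sum_i c_i(\phi(\beta_i) - u_i)$; the cross terms telescope into quadratic forms in $A_p$ whose expectations are $O(\mathrm{tr}(A_p^2)) + O(p) = O(p)$ by the same computation as in the proof of Lemma \ref{lemma:taylor1}. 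The main obstacle is the bookkeeping in this transfer: unlike the identity function, $\phi$ is merely bounded measurable, so one cannot Taylor-expand $\phi$ itself — but one never needs to, since only $u_i$ (which \emph{is} smooth in $\theta_i$, being an integral against the exponential family $\pi_{(\theta_i,d_i)}$) gets expanded, and the $\phi(\beta_i)$ terms are handled by the conditional-centering identity $\E_\mu[(\phi(\beta_i)-u_i)\,g((\beta_k)_{k\neq i})] = 0$ for any bounded $g$. So the hard part is organizing the expansion so that every term is either (a) killed by conditional centering, (b) bounded by $\|\phi\|_\infty^2\sum_i c_i^2 = O(p)$, or (c) a quadratic form controlled by $\mathrm{tr}(A_p^2)=o(p)$ together with \eqref{eq:weak}; I expect this to go through but to require care in matching each term to one of these three buckets.
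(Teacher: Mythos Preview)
Your overall plan is the same as the paper's: expand the square, use conditional centering to kill the ``leading'' cross term, and control the remainder by a perturbation of the local field $\theta_j$ in the single coordinate $A_{ij}\beta_i/\sigma^2$. Where you go astray is in the execution of the perturbation step. You first obtain (correctly) that the fluctuation of $t_j:=\E_\mu[\phi(\beta_j)\mid \beta_k,k\ne j]$ in $\beta_i$ is $O(|A_{ij}|)$, deduce $|\E_\mu[\bar\phi_i\bar\phi_j]|\lesssim |A_{ij}|$, and then sum absolute values to get $\sum_{i\ne j}|A_{ij}|$. That last quantity is \emph{not} controlled by either \eqref{eq:mean_field} or \eqref{eq:weak}; under those hypotheses you only get $\sum_{i\ne j}|A_{ij}|\le p\sqrt{\mathrm{tr}(A_p^2)}=o(p^{3/2})$, which after dividing by $p$ is $o(\sqrt p)$, not $O(1)$. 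So the first route fails for exactly the reason you suspected.

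The fix is not to abandon the perturbation argument but to \emph{sum before you bound}. Concretely: for $i\ne j$ write $t_j^{(i)}$ for $t_j$ with $\beta_i$ frozen (i.e.\ with $\theta_j$ replaced by $\theta_j^{(i)}:=(z_j-\sum_{k\ne i}A_{jk}\beta_k)/\sigma^2$). Then $\phi(\beta_j)-t_j^{(i)}$ is $\sigma(\beta_k:k\ne i)$-measurable, so $\E_\mu[\bar\phi_i(\phi(\beta_j)-t_j^{(i)})]=0$ by conditional centering, and
\[
\E_\mu[\bar\phi_i\bar\phi_j]=\E_\mu\big[\bar\phi_i\,(t_j^{(i)}-t_j)\big].
\]
Now Taylor-expand $\ell(x):=\int\phi\,d\pi_{(x,d_j)}$ to get $t_j^{(i)}-t_j=-\sigma^{-2}A_{ij}\beta_i\,\ell'(\theta_j)+O(A_{ij}^2)$. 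Summing over $i\ne j$ \emph{first}, the first-order contribution is
\[
\E_\mu\Big[\sum_i v_i\sum_{j}A_{ij}u_j\Big],\qquad v_i:=c_i\beta_i\bar\phi_i,\ \ u_j:=c_j\ell'(\theta_j),
\]
with $|v_i|,|u_j|\le C\|\phi\|_\infty$, and this bilinear form is $O(p)$ directly by \eqref{eq:weak}. The second-order contribution is bounded by $\sum_{i,j}A_{ij}^2=\mathrm{tr}(A_p^2)=o(p)$. This is precisely the tripartition (a)--(c) you described at the end; the only missing ingredient was the explicit ``freeze $\beta_i$ in $t_j$'' decomposition, and the realization that \eqref{eq:weak} applies to the \emph{summed} bilinear form, not to the individual entries $|A_{ij}|$. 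Your fallback appeal to Lemma~\ref{lemma:taylor1} as a black box does not work as stated---those estimates concern the specific quadratic forms $\bbeta^{\mathrm T}A\bbeta$ and $\sum_i m_i(\bbeta)(\beta_i-b_i)$, not a general $\phi$---but the underlying mechanism (freeze one coordinate, Taylor-expand, sum, then bound via \eqref{eq:weak} and $\mathrm{tr}(A_p^2)$) is indeed the same.
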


\noindent 
We defer the proofs of these lemmas to the end of this section, and establish Theorem \ref{thm:main}, given these lemmas. 

\begin{proof}[Proof of Theorem \ref{thm:main}]

\textit{Proof of Part (i). } 
Since $\text{tr}(A^2)=o(p)$, \cite[Theorem 4]{yan} implies 
\begin{align}
\log Z_p- \sup_{Q= \prod_{i=1}^{p} Q_i }\Big[  - \frac{1}{2\sigma^2} ( \mathbb{E}_{Q}[\mathbf{X}])^{\mathrm{T}} A ( \mathbb{E}_{Q}[\mathbf{X}])+ \frac{1}{\sigma^2} \mathbf{z}^{\mathrm{T}} \mathbb{E}_{Q}[\mathbf{X}]-\sum_{i=1}^pD(Q_i \| \pi_i)\Big] = o(p), \nonumber 
\end{align}
where $\mathbb{E}_Q[\mathbf{X}]$ is the mean vector of $\mathbf{X} \sim Q = \prod_{i=1}^{p} Q_i$. The desired conclusion then follows upon using Lemma \ref{lemma:naive_approx}.
\\ 

\noindent 
\noindent 
\textit{Proof of Part (ii).} 
With $f(\bbeta) = -\frac{1}{2\sigma^2} \bbeta^{\mathrm{T}} A \bbeta + \frac{1}{\sigma^2} \mathbf{z}^{\mathrm{T}} \bbeta$ as in Lemma \ref{lemma:taylor1}, define 
\begin{align}
C_p(\varepsilon)  = \Big\{ \mathbf{u} \in [-1,1]^{p} : f(\mathbf{u}) - \sum_{i=1}^p \Big(u_i h(u_i,d_i) -c(h(u_i,d_i),d_i) \Big) < \sup_{\mathbf{u} \in [-1,1]^p} M_p(\mathbf{u}) - p \varepsilon \Big\}. \nonumber 
\end{align} 
Note that it suffices to establish that for all $\varepsilon>0$, $\mu({\bf b}\in C_p(\varepsilon)) \to 0$ as $p \to \infty$. To this end,  define the event 
\begin{align}
E_p(\varepsilon/2) = \Big\{ \bbeta \in [-1,1]^p:  | f(\bbeta) - f(\mathbf{b}) - \sum_{i=1}^{p} \theta_i (\beta_i - b_i) | \leq \frac{p \varepsilon}{2} \Big\}. \nonumber 
\end{align}
Note that \eqref{eq:claim3}, in combination with Chebychev inequality, implies that $\mu(E_p(\varepsilon/2)^{c}) \to 0$ as $p \to \infty$. Therefore, 
\begin{align}
\mu( C_p(\varepsilon)) \leq \mu(C_p (\varepsilon) \cap E_p (\varepsilon/2)) + \mu(E_p(\varepsilon/2)^{c}) = \mu(C_p (\varepsilon) \cap E_p (\varepsilon/2)) + {o(1)} .  \label{eq:prob_bound_int1} 
\end{align} 
Now, we have, 
\begin{align}
\mu(C_p (\varepsilon) \cap E_p (\varepsilon/2)) &= \frac{1}{Z_p } \int_{C_p (\varepsilon) \cap E_p (\varepsilon/2)} \exp(f(\bbeta)) \prod_{i=1}^p \pi_i(\mathrm{d} \beta_i)\nonumber \\
&\leq \frac{\exp( p\varepsilon/2)}{Z_p } \int_{C_p (\varepsilon) \cap E_p (\varepsilon/2)} \exp\Big[ f(\mathbf{b}) + \sum_{i=1}^{p} \theta_i (\beta_i - b_i) \Big]  \prod_{i=1}^p \pi_i(\mathrm{d} \beta_i)\nonumber \\ 
&\leq \frac{\exp( - p\varepsilon/2 + \sup_{\mathbf{u} \in [-1,1]^p} M_p(\mathbf{u}) )}{Z_p } \int_{C_p (\varepsilon) \cap E_p (\varepsilon/2)} \exp\Big[ \sum_{i=1}^{p} (\theta_i \beta_i - c(\theta_i,d_i)) \Big] \prod_{i=1}^p \pi_i(\mathrm{d} \beta_i) , \nonumber 
\end{align}
where the first inequality follows using the definition of $E_p(\varepsilon/2)$, while the last inequality follows from the definition of $C_p(\varepsilon)$. Using the display above in combination with \eqref{eq:mf_conclusion}, it suffices to establish that 
\begin{align}
\log \int_{C_p (\varepsilon) \cap E_p (\varepsilon/2)} \exp\Big[ \sum_{i=1}^{p} (\theta_i \beta_i - c(\theta_i,d_i)) \Big]  \prod_{i=1}^p \pi_i(\mathrm{d} \beta_i)= o(p).   \nonumber 
\end{align}
 This argument would be relatively straight-forward if the $\theta_i$ were fixed constants independent of $\bbeta$, as 
 \begin{align}
 \int_{C_p (\varepsilon) \cap E_p (\varepsilon/2)} \exp\Big[ \sum_{i=1}^{p} (\theta_i \beta_i - c(\theta_i,d_i)) \Big]  \prod_{i=1}^p \pi_i(\mathrm{d} \beta_i) \leq \int_{[-1,1]^p} \exp\Big[ \sum_{i=1}^{p} (\theta_i \beta_i - c(\theta_i,d_i)) \Big]  \prod_{i=1}^p \pi_i(\mathrm{d} \beta_i) = 1. \nonumber 
 \end{align} 
 We caution the reader that this is not the case, and the $\theta_i$ are themselves functions of $\bbeta$, as specified in \eqref{eq:local_fields}. 

To overcome this issue, we proceed as follows. Fix $\delta>0$, and let $\mathscr{D}_p(\delta)$ be a $\sqrt{p} \delta$ net of the set $\{A \bbeta, \bbeta \in [-1,1]^{p}\}$ in the euclidean metric, satisfying $\lim_{p \to \infty} \frac{1}{p} \log | \mathscr{D}_p(\delta) | =0$. Under the assumption $\mathrm{tr}(A^2) = o(p)$, such a net was constructed in \cite[Lemma 3.4]{basakmukherjee}. Thus for every $\bbeta \in [-1, 1]^{p}$, there exists $\mathbf{p} \in \mathscr{D}_p(\delta)$ such that $\| A \bbeta - \mathbf{p} \|_2 \leq \delta \sqrt{p}$. For any $\mathbf{p} \in \mathscr{D}_p(\delta)$, we set 
\begin{align}
\mathcal{P}(\mathbf{p}) = \{ \bbeta \in [-1,1]^{p}: \| A \bbeta - \mathbf{p} \|_2 \leq \delta \sqrt{p} \}. \nonumber 
\end{align}
For $\bbeta\in\mathcal{P}(\mathbf{p})$, setting $\theta_i^{\mathbf{p}} = \frac{z_i - p_i}{\sigma^2}$, we have, 
\begin{align}
\sum_{i=1}^{p} (\theta_i - \theta_i^{\mathbf{p}})^2 = \frac{1}{\sigma^4} \sum_{i=1}^{p} (m_i (\bbeta) - p_i)^2= \frac{1}{\sigma^4} \| A \bbeta - \mathbf{p} \|_2^2 \leq \frac{p\delta}{\sigma^4}, \nonumber 
\end{align}
where the last inequality uses the definition of $\mathcal{P}(\mathbf{p})$. This gives, 
\begin{align}
\Big| \sum_{i=1}^{p} (\theta_i - \theta_i^{\mathbf{p}} ) \beta_i  \Big| \leq \frac{p \sqrt{\delta}}{\sigma^2},\quad &
\Big| \sum_{i=1}^{p} \Big(c(\theta_i,d_i) - c(\theta_i^{\mathbf{p} },d_i ) \Big) \Big| \leq \sum_{i=1}^{p} | \theta_i - \theta_i^{\mathbf{p}} | \leq \frac{p \sqrt{\delta}}{\sigma^2}. \nonumber 
\end{align}
where the first inequality follows from Cauchy-Schwarz, and the second inequality follows from the smoothness of $c(\cdot, \cdot)$. 
Thus we have, 
\begin{align}
&\int_{C_p (\varepsilon) \cap E_p (\varepsilon/2)} \exp\Big[ \sum_{i=1}^{p} (\theta_i \beta_i - c(\theta_i,d_i)) \Big]  \prod_{i=1}^p \pi_i(\mathrm{d} \beta_i)\nonumber \\
&\leq  \sum_{\mathbf{p} \in \mathscr{D}_p(\delta)} \int_{\mathcal{P}(\mathbf{p}) } \exp\Big[ \sum_{i=1}^{p} (\theta_i \beta_i - c(\theta_i,d_i)) \Big]\prod_{i=1}^p  \pi_i(\mathrm{d}\beta_i). \nonumber \\
&\leq \exp \Big( \frac{ 2p \sqrt{\delta}}{\sigma^2} \Big) \int_{[-1,1]^{p} } \exp\Big[ \sum_{i=1}^{p} (\theta_i^{\mathbf{p}}  \beta_i - c(\theta_i^{\mathbf{p}},d_i )) \Big]  \prod_{i=1}^p  \pi_i(\mathrm{d}\beta_i)= \exp \Big( \frac{ 2p \sqrt{\delta}}{\sigma^2} \Big) .  \nonumber
\end{align}
This concludes the proof, as $\delta>0$ is arbitrary. 
\\

\noindent 
\textit{Proof of Part (iii).} 
First, note that it suffices to show the result with $\zeta(x,y) = x^r y^s$ for any $r,s \in \mathbb{N}$. Thus we fix $r, s \in \mathbb{N}$ in the rest of the proof. 
Using Lemma \ref{lem:cond_center} with $\phi(x)=x^r$ and $c_i=(\frac{i}{p})^s$ it follows that for any $\varepsilon>0$, 
$$\mu \Big( \Big|  \frac{1}{p}\sum_{i=1}^p \beta_i^r \Big(\frac{i}{p}\Big)^s-\frac{1}{p}\sum_{i=1}^p H(b_i,d_i)\Big(\frac{i}{p}\Big)^s  \Big| > \varepsilon \Big) \to 0,$$
where $H$ is defined as in \eqref{eq:H_defn}.
To complete the proof, it thus suffices to show that 
\begin{align}\label{eq:2021_1}
\mu \Big( \Big|  \frac{1}{p}\sum_{i=1}^p H(b_i,d_i)\Big(\frac{i}{p}\Big)^s-\frac{1}{p}\sum_{i=1}^p H(\hat{u}_i,d_i)\Big(\frac{i}{p}\Big)^s \Big| > \varepsilon  \Big) \to 0. 
\end{align}
Fixing $ K<\infty$ and setting $d_K(i):=d_i 1\{|d_i|\le K\}$, using Lemma \ref{lemma:elementary} Part (vi)  we have
\begin{align}\label{eq:2021_2}
\frac{1}{p}\left|\sum_{i=1}^p H(\hat{u}_i,d_i)\Big(\frac{i}{p}\Big)^s-\sum_{i=1}^pH(\hat{u}_i,d_K(i))\Big(\frac{i}{p}\Big)^s\right|\le \frac{2}{p}\sum_{i=1}^p d_i1\{|d_i|>K\}.
\end{align}
which goes to $0$ as $p\to\infty$ followed by $K\to\infty$, using the uniform integrability assumption on $\frac{1}{p}\sum_{i=1}^p\delta_{d_i}$.
Also, with $\delta>0$ and setting 
\begin{align*}
\hat{u}_\delta(i):=&\hat{u}_i\text{ if }|\hat{u}_i|\le 1-\delta,\\
:=&1-\delta\text{ if }\hat{u}_i>1-\delta,\\
:=&-1+\delta\text{ if }\hat{u}_i<-1+\delta,
\end{align*}
we have
\begin{align}\label{eq:2021_3}
\frac{1}{p}\left|\sum_{i=1}^p H(\hat{u}_i,d_K(i))\Big(\frac{i}{p}\Big)^s-\sum_{i=1}^p H(\hat{u}_\delta(i),d_K(i))\Big(\frac{i}{p}\Big)^s\right| \le &\sup_{x\in [1-\delta,1], |y|\le K} |H(x,y)-H(1-\delta,y)|+ \nonumber \\
&\sup_{x\in [-1,1+\delta], |y|\le K}|H(x,y)-H(-1+\delta,y)|. 
\end{align}
Fixing $K>0$, we now  claim that the RHS of \eqref{eq:2021_3} converges to $0$ as $\delta\to 0$. Given this claim, it follows from \eqref{eq:2021_2} and \eqref{eq:2021_3} that
$$\limsup_{K\to\infty}\limsup_{\delta\to 0}\limsup_{p\to\infty}\Big|\frac{1}{p}\sum_{i=1}^pH( \hat{u}_i,d_i)\Big(\frac{i}{p}\Big)^s-\frac{1}{p}\sum_{i=1}^pH(\hat{u}_\delta(i),d_K(i))\Big(\frac{i}{p}\Big)^s\Big|=0.$$
A similar argument with $\hat{u}_i$ replaced by $b_i$ gives
$$\limsup_{K\to\infty}\limsup_{\delta\to 0}\limsup_{p\to\infty}\Big|\frac{1}{p}\sum_{i=1}^pH(b_i,d_i)\Big(\frac{i}{p}\Big)^s-\frac{1}{p}\sum_{i=1}^pH(b_\delta(i),d_K(i))\Big(\frac{i}{p}\Big)^s\Big|=0,$$
where $b_\delta(i):=b_i1\{|b_i|\le 1-\delta\}$. 

It suffices to show that for every $\delta,K$ fixed we have 
$$\mu \Big( \Big|\frac{1}{p}\sum_{i=1}^pH(\hat{u}_\delta(i),d_K(i))\Big(\frac{i}{p}\Big)^s-\frac{1}{p}\sum_{i=1}^pH(b_\delta(i),d_K(i))\Big(\frac{i}{p}\Big)^s\Big| > \varepsilon \Big)  \to 0.$$
But this follows on noting that 
$$\sup_{|x|\le 1-\delta, |y|\le K}\Big|\frac{\partial H(x,y)}{\partial x}\Big|
=:M<\infty,$$
and so  
$$\frac{1}{p}\left|\sum_{i=1}^p H(\hat{u}_\delta(i),d_K(i))\Big(\frac{i}{p}\Big)^s- \sum_{i=1}^p H(b_\delta(i),d_K(i))\Big(\frac{i}{p}\Big)^s\right|\le \frac{M}{p}\sum_{i=1}^p | \hat{u}_\delta(i)-b_\delta(i)|\le M\sqrt{\frac{1}{p}\sum_{i=1}^p \Big[\hat{u}_i-b_i \Big]^2},$$
which converges to zero in probability under the posterior distribution $\mu_{\mathbf{y}, \mathbf{X}}(\cdot)$. 
Here the last estimate uses part (ii) of this Theorem.
\\

To complete the argument, it thus remains to verify the claim involving \eqref{eq:2021_3}, for which it suffices to verify continuity of the function $(x,y)\mapsto H(x,y)$ at $x = \pm 1$, uniformly for $y\in [-K,K]$. By symmetry, it suffices to show that
$$\lim_{\delta\to 0} \sup_{x\in [1-\delta,1],|y|\le K}| H(x,y)-1|\to 0.$$

Suppose this is not true. Then there exists sequences $\{x_k\}_{k\ge 1}, \{y_k\}_{k\ge 1}$ with $\lim_{k\to\infty}x_k=1$, and $|y_k|\le K$, such that $|H(x_k,y_k)-1|>\varepsilon$ for all $k$, for some $\varepsilon>0$. Without loss of generality, by passing to a subsequence, we can assume that $y_k$ converges to $y\in [-K,K]$. Using Lemma \ref{lemma:elementary} Part (iii) we have that $\pi_{(h(x_k,y_k),y_k)}$ converges weakly to $\delta_1$, the point mass at $1$. Consequently, using DCT we have 
$$H(x_k,y_k)=\int_{[-1,1]} x^r d\pi_{(h(x_k,y_k),y_k)}(z)\to 1,$$ a contradiction. This verifies the claim, and hence completes the proof of part (iii).

\end{proof}

\noindent 
Next we turn to the proof of Lemmas \ref{lemma:taylor1} and \ref{lem:cond_center}. 

\begin{proof}[Proof of Lemma \ref{lemma:taylor1}]
Since \eqref{eq:claim1}  and \eqref{eq:claim2} follow immediately from \cite[(4.40)]{yan} and \cite[(4.41)]{yan} respectively, it suffices to prove \eqref{eq:claim3}. But this follows upon observing that
\begin{align*}
\Big|f(\bbeta)-f({\bf b})-\sum_{i=1}^p \theta_i(\beta_i-b_i)\Big|
\le  \frac{1}{\sigma^2}\Big|\sum_{i=1}^pm_i(\bbeta)(\beta_i-b_i) \Big|+\frac{1}{2\sigma^2}|\bbeta^{\mathrm{T}} A \bbeta-\mathbf {b}^{\mathrm{T}} A \mathbf{b}|,
\end{align*}
and using \eqref{eq:claim1} and \eqref{eq:claim2}.

\end{proof}

\begin{proof}[Proof of Lemma~\ref{lem:cond_center}]

For any $i,j\in [p]$ with $i\ne j$, setting $$t_i:=\E[\phi(\beta_i)|\beta_k, k\ne i]=\int_{[-1,1]} \phi(z) d\pi_{(\theta_i,d_i)}(z), \quad \theta_i:=\frac{z_i-\sum_{k=1}^pA_{ik}\beta_k}{\sigma^2}$$ we have
\begin{align}\label{eq:cond_center}
\E_{\mu}\Big[\phi(\beta_i)-t_i\Big] \Big[\phi(\beta_j)-t_j \Big]=\E_{\mu}\Big[\phi(\beta_i)-t_i\Big] \Big[\phi(\beta_j)-t_j^{(i)}\Big]+\E_{\mu}\Big[\phi(\beta_i)-t_i\Big] \Big[t_j^i-t_j\Big]
\end{align}
where $$t_j^{(i)}:=\int_{[-1,1]} \phi(z) d\pi_{(\theta_j^i,d_j)}(z),\quad \theta_j^{(i)}:=\frac{z_j-\sum_{k\ne i}A_{jk}\beta_k}{\sigma^2}.$$
Since $t_j^{(i)}$ does not depend on $\beta_i$, we have
\begin{align*}
\E_{\mu}\Big[\phi(\beta_i)-t_i\Big] \Big[\phi(\beta_j)-t_j^{(i)}\Big]=\E_{\mu}\left( \Big[\phi(\beta_j)-t_j^{(i)} \Big] \E_{\mu}\Big[\phi(\beta_i)-t_i\Big|\beta_k, j\ne i\Big] \right)=0.
\end{align*}
For estimating the second term in the RHS of \eqref{eq:cond_center}, setting $$\ell (x):=\int_{[-1,1]} \phi(z) d\pi_{(x,d_i)}(z)$$ a Taylor's series expansion gives
$$t_j^{(i)}-t_j= \ell(\theta_j^{(i)})-\ell(\theta_j)=-\frac{A_{ij}}{\sigma^2}\beta_i \ell'(\theta_j)+\frac{A_{ij}^2}{2\sigma^4}\beta_i^2\ell''(\xi_j^{(i)}).$$

Using the last two displays and summing over $i,j$ give
\begin{align*}
\left|\sum_{i\ne j}c_i c_j\E_{\mu}\Big[\phi(\beta_i)-t_i\Big]\Big[\phi(\beta_j)-t_j\Big] \right|\le & \left|\sum_{i\ne j}c_ic_j\frac{A_{ij}}{\sigma^2} \beta_i  \ell'(\theta_j)\Big[\phi(\beta_i)-t_i\Big]\right|+\left|\sum_{i,j=1}^pc_ic_j\frac{A_{ij}^2}{2\sigma^4}\beta_i^2 \ell''(\xi_j^{(i)})\right|\\
\le &\frac{2\lVert \phi \rVert_\infty }{\sigma^2} \sup_{{\bf u}\in [-1,1]^p}\sum_{i=1}^p \Big|\sum_{j=1}^p A_{ij}u_j\Big|+\frac{1}{2\sigma^4}\sum_{i,j=1}^p A_{ij}^2,
\end{align*}
where the last estimate uses the fact that $\lVert \ell^{(r)}\rVert_\infty\le 1$ for $r= 1,2$. The desired conclusion follows upon using the given hypothesis on $A_p$.

\end{proof}

\subsection{Proof of Theorem~\ref{thm:var_conv_new}}
\label{subsec:thm2_proof} 
We start with a proof of Lemma \ref{lem:off_diagonal}. 

\begin{proof}[Proof of Lemma \ref{lem:off_diagonal}]
Without loss of generality assume that the samples $(Z_1,\cdots,Z_p)$ are arranged in increasing order of variance, i.e. increasing order of $\{D_p(i,i)\}_{i=1}^p$. Let $\{\delta_p : p \geq 1\}$ be a positive sequence converging to $0$, such that 
\[\frac{1}{\delta_p^{4}}\sum_{i, j=1}^{p} A_p(i,j)^2=o(p).\]
The existence of such a sequence follows from the assumption \eqref{eq:mean_field}. 
Let \[q:=\arg\min \{i\in [p]:D_p(i,i)\ge \delta_p\},\quad r:=\arg\max \{i\in [p]:D_p(i,i)\le \frac{1}{\delta_p}\}.\] 
Recall that if $Z \sim \mathcal{N}(0,1)$, $\mathbb{E}|Z| =\sqrt{\frac{2}{\pi}}$. This implies 
\begin{eqnarray}\label{eq:pd}
\begin{split}
&\mathbb{E}\sup_{{\bf u}\in [-1,1]^p}|\sum_{i=1}^{q-1} Z_i u_i|\le \mathbb{E}\sum_{i=1}^{q-1}|Z_i| = \sqrt{\frac{2}{\pi}} \sum_{i=1}^{q-1} \sqrt{D_p(i,i)}\le \sqrt{\frac{2\delta_p}{\pi}} p=o(p),\\
&\mathbb{E}\sup_{{\bf u}\in [-1,1]^p}|\sum_{i=r+1}^p Z_i u_i|\le \mathbb{E}\sum_{i=r+1}^p|Z_i|\le\sqrt{\frac{2}{\pi}} \sum_{i=1}^p \sqrt{D_p(i,i)} \mathbf{1}\Big(D_p(i,i) \geq \frac{1}{\delta_p} \Big)\le \sqrt{\frac{2\delta_p}{\pi} }\sum_{i=1}^p D_p(i,i)=o(p),
\end{split}
\end{eqnarray}
where we use $\sum_{i=1}^{p} D_p(i,i) = O(p)$. 
For $q\le i\le r$, setting $Y_i:=\frac{Z_i}{\sqrt{D_p(i,i)}}$,  
let  
 $\Gamma_p$ denote the $(r-q+1\times r-q+1)$ dimensional covariance matrix of the vector ${\bf Y}:=(Y_{q},\cdots,Y_r)^\top$.
Define $\mathcal{E}=(\xi_{q},\cdots,\xi_{r})^{\top}:=\Gamma_p^{-1/2}{\bf Y}$, and note that $(\xi_q,\cdots,\xi_r)\stackrel{i.i.d.}{\sim} \mathcal{N}(0,1)$. 
Setting  $v_i:=u_i\sqrt{D_p(i,i)}$ and ${\bf v}:=(v_q,\cdots,v_r)$, we have
\begin{align}\label{eq:represent}\sum_{i=q}^ru_iZ_i=\sum_{i=q}^r v_iY_i={\bf v}^{\top}\Gamma^{1/2}\mathcal{E}={\bf v}^\top \mathcal{E}+{\bf v}^\top B\mathcal{E}=\sum_{i=q}^r \xi_i \sqrt{D_p(i,i)}u_i+{\bf v}^\top B\mathcal{E},
\end{align}
where $B:=\Gamma_p^{1/2}-{\bf I}_{r-q+1}$. 
We now claim that
\begin{align}\label{eq:vdelta}
\frac{1}{p}\sup_{{\bf v}\in \frac{1}{\delta_p}[-1,1]^{r-q+1}}{\bf v}^\top B\mathcal{E}\stackrel{P}{\rightarrow}0.
\end{align}
Given \eqref{eq:vdelta}, it follows from \eqref{eq:represent} that
\begin{align}\label{eq:vdelta2}
\frac{1}{p}\sup_{{\bf u}\in [-1,1]^{r-q+1}}\Big|\sum_{i=q}^r u_iZ_i-\sum_{i=q}^r u_i\sqrt{D_p(i,i)} \xi_i\Big|\stackrel{P}{\rightarrow}0.
\end{align}
Finally with $\{\xi_i\}_{[p]\backslash [q,r]}\stackrel{i.i.d.}{\sim} \mathcal{N}(0,1)$ independent of $\{\xi_i\}_{i=q}^r$, using arguments similar to the derivation of \eqref{eq:pd} we get
\begin{align}
\begin{split}
\label{eq:pd2}
\mathbb{E}\sup_{{\bf u}\in [-1,1]^p}\Big|\sum_{i=1}^{q-1}u_i\sqrt{D(i,i)}\xi_i\Big|\le &\sqrt{\frac{2\delta_p}{\pi}} p=o(p)\\
\mathbb{E}\sup_{{\bf u}\in [-1,1]^p}\Big|\sum_{i=r+1}^{p}u_i\sqrt{D(i,i)}\xi_i\Big|\le &\sqrt{\frac{2\delta_p}{\pi}} \sum_{i=1}^pD_p(i,i)=o(p).
\end{split}
\end{align}
Combining \eqref{eq:pd}, \eqref{eq:vdelta2} and \eqref{eq:pd2} the desired conclusion follows.
\\

It thus remains to verify \eqref{eq:vdelta}. To this effect, note that $\Gamma_p(i,i)=1$, and $\Gamma_p(i,j)=\frac{A_p(i,j)}{\sqrt{D_p(i,i)D_p(j,j)}}$. Setting $C_\delta(i,j):=\Gamma_p(i,j)1\{i\ne j\}$, and using Spectral Theorem, we have,  \[C_\delta=\sum_{\ell=q}^r \lambda_\ell \psi_\ell \psi_\ell^{\top}=\Psi \Lambda \Psi^{\top}.\]
Observe that $\Gamma_p$ is positive semidefinite, and has eigenvalues $1 + \lambda_{\ell}$ with $\lambda_{\ell} \geq -1$. 
Then we have  
\begin{align}\label{eq:mean_field_A}
\sum_{\ell=q}^r\lambda_\ell^2=\sum_{i,j=1}^pC_\delta(i,j)^2\le \frac{1}{\delta_p^{2 }}\sum_{i, j=1}^{p} A_p(i,j)^2.
\end{align}
Finally, the matrix $B$ can expressed as
\[B=({\bf I}_{r-q+1}+C_\delta)^{1/2}-{\bf I}_{r-q+1}=\sum_{\ell=q}^r\mu_\ell\psi_\ell \psi_\ell^\top=:\Psi \widetilde{\Lambda} \Psi^{\top},\]
where $\mu_\ell:=\sqrt{1+\lambda_\ell}-1$, and $\widetilde{\Lambda}$ is a diagonal matrix with entries $\{\mu_\ell\}_{\ell=q}^r$.
Consequently, 
\begin{align*}
\mathbb{E} \sup_{{\bf v}\in \frac{1}{\delta_p}[-1,1]^{r-q+1} }|{\bf v}^\top B \mathcal{E}|=\frac{1}{\delta_p}\mathbb{E}\lVert B \mathcal{E}\rVert_1\le & \frac{\sqrt{p}}{\delta_p}\mathbb{E} \lVert B \mathcal{E}\rVert_2=\frac{\sqrt{p}}{\delta_p}\mathbb{E} \lVert \Psi \widetilde{\Lambda}  \mathcal{F}\rVert_2=\frac{\sqrt{p}}{\delta_p}\mathbb{E}\sqrt{\sum_{\ell=q}^r \mu_\ell^2 \mathcal{F}_\ell^2},
\end{align*}
where $\mathcal{F}_\ell:=\psi_\ell^{\top}\mathcal{E}$ are i.i.d. $\mathcal{N}(0,1)$ for $q\le \ell \le r$. By Jensen's inequality, the last expression is bounded by \[\frac{\sqrt{p}}{\delta_p}\sqrt{\sum_{\ell=q}^r \mu_\ell^2} \le C\frac{\sqrt{p}}{\delta_p}\sqrt{\sum_{\ell=q}^r\lambda_\ell^2}= C\frac{\sqrt{p}}{\delta_p} \sqrt{\sum_{i,j=1}^{p}  C_{\delta}(i,j)^2} \le C \frac{\sqrt{p}}{\delta_p^2} \sqrt{\sum_{i, j=1}^{p} A_p(i,j)^2}\]
where $C:=\sup_{x \geq -1 }\Big|\frac{\sqrt{1+x}-1}{x}\Big|<\infty$, and the last bound uses \eqref{eq:mean_field_A}. The last term above is $o(p)$ by the choice of $\delta_p$, and so we have verified \eqref{eq:vdelta}. This completes the proof of the Lemma.
\end{proof} 

To establish Theorem~\ref{thm:var_conv_new}, we need the following definitions. 
\begin{definition}
\label{def:prob_dist} 
Let $\xi_1, \cdots, \xi_p \sim \mathcal{N}(0,1)$ be iid random variables obtained from Lemma \ref{lem:off_diagonal}. 
Let $X \sim U([0,1])$ be independent of the $\xi_i$ variables. Given $\mathbf{u} \in [-1,1]^p$ set $Z = \xi_i$ and $U= u_i$  if $X \in [\frac{(i-1)}{p} , \frac{i}{p} ]$. Define ${\tilde{L}}_{ p}^{(\mathbf{u})}$ to be the joint distribution of $(X,Z,U)$. 
\end{definition}

Fix $W\in \mathcal{W}$, and $\phi,\psi$ are $L^1$ functions on $[0,1]$. Let $\tilde{\mathscr{F}}_{2, 4}$ denote the space of all joint distributions $(X,Z,U) \sim \nu$ such that $X \sim  U([0,1])$, $\mathbb{E}_{\nu}[Z^2 ] \leq  4$, $|U| \leq 1$. 
Define the functional $\tilde{\mathcal{G}}_{W, \phi, \psi} : \tilde{\mathscr{F}}_{2,4}  \to \mathbb{R} \cup \{ - \infty\}$ such that 
\begin{align}
\tilde{\mathcal{G}}_{W,\phi,\psi}(\nu) = \frac{1}{\sigma^2} \Big[ - \frac{1}{2} \mathbb{E}[W(X_1, X_2) U_1 U_2] + \mathbb{E}[\phi(X) U ] + \mathbb{E}[\sqrt{\psi(X)} U Z] \Big] - \mathbb{E}\Big[G\Big(U, \frac{\psi(X)}{\sigma^2} \Big) \Big], \label{eq:functional} 
\end{align}  
where  $(X_1, Z_1, U_1), (X_2, Z_2, U_2)$ are iid copies from $\nu$. 
Finally,  let $\tilde{\mathscr{F}}$ denote the space of all probability measures $\nu$ on $\R^3$, such that if $(X,Z,U)\sim  \nu$, then we have 
\begin{align}
 X \sim U([0,1]), Z \sim \mathcal{N}(0,1), X \perp \!\!\! \perp Z,  |U| \leq 1 \, \mathrm{a.s.}  \label{eq:domain} 
\end{align} 
We note that $\tilde{\mathscr{F}} \subset \tilde{\mathscr{F}}_{2,4}$, an observation that will be helpful in our subsequent analysis.

\noindent 
The following stability estimates will be crucial in our proof of Theorem~\ref{thm:var_conv_new}. The proof is deferred to the Appendix. 

\begin{lemma}\label{lem:rate2}
\begin{itemize}
\item[(i)]  We have, for $W, W' \in \mathcal{W}$, $\phi,\psi, \phi', \psi' \in L^1([0,1])$, 
\begin{align}
\sup_{\nu \in \tilde{\mathscr{F}}_{2,4}} |\tilde{\mathcal{G}}_{W, \phi , \psi}(\nu) - \tilde{\mathcal{G}}_{W', \phi', \psi'}(\nu)|  \lesssim  \| W- W'\|_{\square} + \| \phi - \phi' \|_1 + \| \psi - \psi'\|_1. \nonumber 
\end{align}

\item[(ii)]
Suppose  the following assumptions hold:

\begin{enumerate}
\item[(a)]
$W_k,W\in \mathcal{W}$ is such that $d_\square(W_k,W)\to 0$.

\item[(b)]
$\phi_k, \phi\in \mathcal{L}$ is such that $\int_{[0,1]}|\phi_k(x)-\phi(x)|dx \to 0$.

\item[(c)]
$\psi_k,\psi\in L_1[0,1]$ is such that $\int_{[0,1]}|\psi_k(x)-\psi(x)|dx\to 0$.

\end{enumerate}

Then we have
\begin{align}
\sup_{\nu \in \tilde{\mathscr{F}}_{2,4} } | \tilde{\mathcal{G}}_{W_k, W_k \cdot \phi_k + \phi_k \psi_k, \psi_k}(\nu) - \tilde{\mathcal{G}}_{W, W \cdot \phi + \phi \psi, \psi}(\nu)| \to 0 
\end{align}
as $k \to \infty$.  
\end{itemize} 
\end{lemma}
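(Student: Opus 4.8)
The plan is to prove both stability estimates by decomposing the functional $\tilde{\mathcal{G}}_{W,\phi,\psi}$ into its constituent pieces and bounding the change in each piece separately, uniformly over $\nu \in \tilde{\mathscr{F}}_{2,4}$. Recall that for $(X_1,Z_1,U_1),(X_2,Z_2,U_2)$ i.i.d.\ copies of $\nu$, the functional is a sum of a quadratic term $-\tfrac{1}{2\sigma^2}\mathbb{E}[W(X_1,X_2)U_1U_2]$, a linear term $\tfrac{1}{\sigma^2}\mathbb{E}[\phi(X)U]$, a ``noise'' term $\tfrac{1}{\sigma^2}\mathbb{E}[\sqrt{\psi(X)}UZ]$, and an entropy term $-\mathbb{E}[G(U,\psi(X)/\sigma^2)]$. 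For part (i), I would handle the four pieces as follows. For the quadratic term, the key observation is that $U_1,U_2 \in [-1,1]$ and $X_1,X_2$ are marginally uniform, so writing $f(x) := \mathbb{E}[U_1 \mid X_1 = x]$ (and similarly $g$), we have $\mathbb{E}[W(X_1,X_2)U_1U_2] = \int_{[0,1]^2} W(x,y) f(x) g(y)\, dx\, dy$ by independence of the two copies; since $\|f\|_\infty, \|g\|_\infty \le 1$, the characterization \eqref{eq:cut} of the cut norm as the $L^\infty \mapsto L^1$ operator norm gives $|\mathbb{E}[(W-W')(X_1,X_2)U_1U_2]| \le \|W-W'\|_{\infty \mapsto 1} \le 4\|W-W'\|_\square$. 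For the linear term, $|\mathbb{E}[(\phi-\phi')(X)U]| \le \mathbb{E}|\phi(X)-\phi'(X)| = \|\phi-\phi'\|_1$ since $X$ is uniform and $|U|\le 1$. For the noise term, write $\mathbb{E}[(\sqrt{\psi(X)}-\sqrt{\psi'(X)})UZ]$ and bound $|\sqrt{a}-\sqrt{b}| \le \sqrt{|a-b|}$ together with Cauchy--Schwarz in $(X,Z)$ and $\mathbb{E}[Z^2]\le 4$ to get a bound of the form $2\|\psi-\psi'\|_1^{1/2}$; alternatively, if one wants a clean $\|\psi-\psi'\|_1$ dependence, one must instead absorb this term with the help of the extra factor $U$ and Jensen, but the square-root bound already suffices for the qualitative statement. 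For the entropy term, Lemma \ref{lemma:G_stability} gives $\sup_{u,d}|\partial G/\partial d| \le \tfrac12$, so $|G(U,\psi(X)/\sigma^2) - G(U,\psi'(X)/\sigma^2)| \le \tfrac{1}{2\sigma^2}|\psi(X)-\psi'(X)|$, whence the entropy contribution is bounded by $\tfrac{1}{2\sigma^2}\|\psi-\psi'\|_1$. Summing the four bounds yields part (i) (with the understanding that the $\psi$-dependence may appear as $\|\psi-\psi'\|_1 + \|\psi-\psi'\|_1^{1/2}$, which is still $\to 0$ whenever $\|\psi-\psi'\|_1 \to 0$, and in fact one can replace the square-root bound by a Lipschitz bound using that $\psi,\psi'$ are uniformly integrable along the relevant sequences; I would remark on this rather than belabor it).

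For part (ii), I would first reduce to part (i) plus an approximation argument, since the only genuinely new feature is that convergence of $W$ is in cut norm $d_\square$ (not in $L^1$), so there is an implicit relabeling of $[0,1]$ (a measure-preserving bijection) achieving the infimum defining $d_\square$. The point is that $\tilde{\mathcal{G}}_{W,\phi,\psi}(\nu)$ is invariant under simultaneously relabeling the $X$-coordinate of $\nu$ and applying the same measure-preserving map to $W$, $\phi$, $\psi$; but here $\phi$ and $\psi$ also change, and they converge only in $L^1$, not under relabelings. So instead I would argue directly: given $\varepsilon>0$, pick $L^1$-bounded, say bounded, approximations $\bar W$ of $W$ etc.; but more cleanly, one uses that $d_\square(W_k,W)\to 0$ means there exist measure-preserving maps $\sigma_k$ with $\|W_k^{\sigma_k} - W\|_\square \to 0$, and separately $\|\phi_k - \phi\|_1 \to 0$, $\|\psi_k - \psi\|_1 \to 0$. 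The subtlety is that applying $\sigma_k$ to $W_k$ forces us to also apply $\sigma_k$ to $\phi_k,\psi_k$ inside the coupled functional $\tilde{\mathcal{G}}_{W_k, W_k\cdot\phi_k + \phi_k\psi_k, \psi_k}$, and $\phi_k^{\sigma_k}$ need not converge. Here the saving grace is the specific coupled form: the term $W_k \cdot \phi_k$ denotes $x \mapsto \int W_k(x,y)\phi_k(y)\,dy$, so in the functional the linear term $\tfrac1{\sigma^2}\mathbb{E}[(W_k\cdot\phi_k)(X)U] = \tfrac1{\sigma^2}\int\int W_k(x,y)\phi_k(y) f_\nu(x)\,dx\,dy$ is itself a cut-norm-type pairing, and one can re-derive the whole estimate treating $(W_k,\phi_k,\psi_k)$ jointly: the linear-in-$U$ part and the quadratic part together form $\int\int W_k(x,y)[\tfrac{-1}{2}f(x)f(y) + \phi_k(y)f(x)]$, which is controlled by $\|W_k - W\|_\square$ after bounding $\|\phi_k\|_1$ uniformly, plus a term $\int W(x,y)(\phi_k(y)-\phi(y))f(x)$ controlled by $\|W\|_{\infty\mapsto 1}$-type bounds times $\|\phi_k - \phi\|_1$ (need $W$ to be, say, in $L^\infty$ here, or approximate $W$ by a bounded graphon and control the tail by uniform integrability). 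The remaining $\phi_k\psi_k$ contribution and the entropy term are handled exactly as in part (i). So the proof of (ii) is: (a) bound $\|W_k - W\|_\square \to 0$ and use it for the quadratic and $W_k\cdot\phi_k$-linear pieces after replacing $\phi_k$ by $\phi$ at the cost of $O(\|W\|_\infty \|\phi_k-\phi\|_1)$ (with a truncation of $W$ to handle unboundedness, the error being controlled by uniform integrability of $W$ which follows since $W \in L^1$ via approximation); (b) bound $\|\phi_k\psi_k - \phi\psi\|_1 \le \|\phi_k\|_\infty\|\psi_k-\psi\|_1 + \|\psi\|_\infty\|\phi_k-\phi\|_1$ (again with truncation if these are not bounded) for the $\phi_k\psi_k$-linear piece; (c) use $\sup|\partial G/\partial d|\le\tfrac12$ and $\|\psi_k-\psi\|_1\to 0$ for the entropy piece; (d) use $\|\psi_k-\psi\|_1 \to 0$ and Cauchy--Schwarz with $\mathbb{E}[Z^2]\le4$ for the $\sqrt{\psi}$-noise piece.

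The main obstacle will be part (ii), specifically handling the cut-norm (rather than $L^1$) convergence of $W_k$ in the presence of the relabeling ambiguity, combined with the fact that none of $W$, $\phi$, $\psi$ is assumed bounded — only $L^1$. The clean way around both issues is a truncation/approximation scheme: fix a threshold $T$, replace $W$ by $W\mathbf{1}\{|W|\le T\}$ and similarly for $\phi,\psi$; on the truncated quantities all the bounds above are uniform in $\nu$ (with constants depending on $T$), and the truncation errors are controlled uniformly in $\nu$ by $\int_{|W|>T}|W| + \int_{|\phi|>T}|\phi| + \int_{|\psi|>T}|\psi|$, which $\to 0$ as $T\to\infty$ since $W,\phi,\psi\in L^1$; one also needs the analogous tails of $W_k,\phi_k,\psi_k$ to be uniformly small, which follows because $L^1$ convergence (for $\phi_k,\psi_k$) gives uniform integrability of the sequence, and for $W_k$ one uses that $d_\square$ convergence together with... here one actually needs to be slightly careful, as cut-norm convergence does not imply $L^1$-tightness in general; but the hypotheses of Theorem \ref{thm:var_conv_new} and Lemma \ref{lemma:G_stability} arise from concrete matrix sequences where $W_{pA_p}$ has controlled $L^1$ norm, so in the application this tightness is available, and I would either assume it as part of the hypotheses or note that it follows from $\mathrm{tr}(A_p^2)=o(p)$ plus the $O(p)$ trace bound. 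Then one sends $T\to\infty$ after $k\to\infty$. I would present part (i) in full detail since it is short and mechanical, and present part (ii) via this truncation reduction to part (i), flagging the uniform-integrability point as the one place requiring care.
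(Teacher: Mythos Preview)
Your Part (i) matches the paper's argument exactly: decompose into the quadratic, linear, noise, and entropy pieces and bound each, using the $L^\infty \to L^1$ characterization of cut norm for the quadratic term, $|U|\le1$ for the linear term, Cauchy--Schwarz with $\mathbb{E}[Z^2]\le4$ and $(\sqrt{a}-\sqrt{b})^2 \le |a-b|$ for the noise term, and the $\partial G/\partial d$ bound from Lemma~\ref{lemma:G_stability} for the entropy term. (Your observation that the $\psi$-dependence comes out as $\|\psi-\psi'\|_1^{1/2}$ rather than $\|\psi-\psi'\|_1$ is correct; the paper's displayed bound \eqref{eq:c_stability} appears to have the square root dropped, though this is immaterial for how the lemma is used.)

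For Part (ii), your approach works but is more elaborate than needed because you misread $d_\square(W_k,W)$ as the cut \emph{distance} (infimum over relabelings). In this paper $d_\square$ simply denotes the cut \emph{norm} $\|W_k-W\|_\square$ (see Definition~\ref{def:cut} and its invocation in the paper's proof of Part (ii) and in the corollaries), so there is no measure-preserving bijection to contend with, and your entire discussion of relabelings and of $L^1$-tightness of the sequence $W_k$ is unnecessary. The paper's route is shorter: first replace $(W_k,\psi_k)$ by $(W,\psi)$ everywhere at cost $\lesssim \|W_k-W\|_\square + \|\psi_k-\psi\|_1$, noting (as you do) that the linear piece $\mathbb{E}[(W_k\cdot\phi_k)(X)U] = \int\!\!\int W_k(x,y)\phi_k(y)f_\nu(x)\,dx\,dy$ is itself a cut-norm pairing since $\|\phi_k\|_\infty\le1$; then, with $W$ and $\psi$ now fixed, show $W\cdot\phi_k + \phi_k\psi \to W\cdot\phi + \phi\psi$ in $L^1$ via dominated convergence with integrable dominator $2\int|W(\cdot,y)|\,dy + 2\psi$. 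Your truncation scheme would also succeed, but the paper avoids truncating $W_k$ altogether because only the limit $W$ enters the DCT step. The one implicit hypothesis your more careful reading flushes out is that $\|\phi_k\|_\infty$ must be uniformly bounded (the paper's undefined ``$\phi_k,\phi\in\mathcal{L}$'' is presumably meant to encode this); this holds automatically in all the applications, where $\phi_k=w_{\bbeta_0}$ takes values in $[-1,1]$.
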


\begin{lemma}\label{lemma:sup_equiv} 
We have, for any $W \in \mathcal{W}$, $g, \psi \in L^1([0,1])$, 
\begin{align}
\sup_{ \nu \in \tilde{\mathscr{F}} } \tilde{\mathcal{G}}_{W,g,\psi} (\nu) = \sup_{F \in \mathscr{F}}  \mathcal{G}_{W,g,\psi} (F). \nonumber 
\end{align} 
Further, both the suprema are attained. 
\end{lemma}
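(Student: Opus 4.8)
The plan is to prove the two variational problems have the same value by exhibiting, for any $F \in \mathscr{F}$, a measure $\nu_F \in \tilde{\mathscr{F}}$ with $\tilde{\mathcal{G}}_{W,g,\psi}(\nu_F) = \mathcal{G}_{W,g,\psi}(F)$, and conversely extracting from any $\nu \in \tilde{\mathscr{F}}$ a function $F_\nu \in \mathscr{F}$ with $\mathcal{G}_{W,g,\psi}(F_\nu) \geq \tilde{\mathcal{G}}_{W,g,\psi}(\nu)$. For the first direction, given $F$ simply let $(X,Z) \sim U[0,1] \otimes N(0,1)$ and set $U = F(X,Z)$; since $F$ is bounded measurable into $[-1,1]$, the law $\nu_F$ of $(X,Z,U)$ lies in $\tilde{\mathscr{F}}$, and comparing Definition \ref{def:limit_prob} with \eqref{eq:functional} (noting $g = W\cdot\phi + \phi\psi$ so that $\phi$ plays the role of the first argument only in the abstract functional; here we use the version with $g$ directly) term by term gives equality of the two functionals. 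Hence $\sup_{\nu\in\tilde{\mathscr{F}}} \tilde{\mathcal{G}} \geq \sup_{F\in\mathscr{F}} \mathcal{G}$.

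For the reverse inequality, take $\nu \in \tilde{\mathscr{F}}$ with $(X,Z,U)\sim\nu$, $X\perp Z$, $X\sim U[0,1]$, $Z\sim N(0,1)$, $|U|\le 1$. Define $F_\nu(x,z) := \mathbb{E}_\nu[U \mid X = x, Z = z]$, a regular conditional expectation, which is bounded measurable into $[-1,1]$, so $F_\nu \in \mathscr{F}$. The linear terms in $U$ are unchanged under replacing $U$ by $F_\nu(X,Z)$: for $\mathbb{E}[g(X)U]$ and $\mathbb{E}[\sqrt{\psi(X)}UZ]$ this is the tower property since the multipliers are $\sigma(X,Z)$-measurable, and for the quadratic term $\mathbb{E}[W(X,X')U U']$ with $(X',Z',U')$ an independent copy, conditional independence given $(X,Z),(X',Z')$ lets us replace $UU'$ by $F_\nu(X,Z)F_\nu(X',Z')$ in expectation. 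Finally, for the entropy term, $G(\cdot, d)$ is convex in its first argument by Lemma \ref{lemma:G_stability} (indeed $\partial^2 G/\partial u^2 > 0$), and it extends lower-semicontinuously and convexly to $[-1,1]$ by Lemma \ref{lemma:elementary}(iv); hence by the conditional Jensen inequality $\mathbb{E}[G(F_\nu(X,Z), \psi(X)/\sigma^2)] \leq \mathbb{E}[G(U, \psi(X)/\sigma^2)]$, and since this term enters with a minus sign we get $\mathcal{G}_{W,g,\psi}(F_\nu) \geq \tilde{\mathcal{G}}_{W,g,\psi}(\nu)$. Taking suprema gives the opposite inequality, so the two values coincide.

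It remains to argue that both suprema are attained. I would do this on the $\tilde{\mathscr{F}}$ side and transfer. Equip the space of probability measures on $\mathbb{R}^3$ with the weak topology; $\tilde{\mathscr{F}}$ is not weakly closed (the Gaussian marginal constraint is, but one wants compactness), so instead work with $\tilde{\mathscr{F}}_{2,4}$ restricted to measures with $X\sim U[0,1]$, $|U|\le 1$, and $\mathbb{E}[Z^2]\le 4$: this set is tight, hence relatively compact, and the constraints $X\sim U[0,1]$, $|U|\le 1$ are preserved under weak limits while $\mathbb{E}[Z^2]\le 4$ is preserved by lower semicontinuity of second moments — but one must check the optimizing sequence does not lose mass, i.e. the limit still has $Z\sim N(0,1)$. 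The cleaner route: first establish, via an argument like the reverse inequality above combined with a truncation/Gaussianization of $Z$, that the supremum over $\tilde{\mathscr{F}}_{2,4}$ equals the supremum over $\tilde{\mathscr{F}}$ equals the supremum over $\mathscr{F}$; then show the $\mathscr{F}$-functional $\mathcal{G}_{W,g,\psi}$ attains its max by a direct compactness argument — bounded measurable functions $F:[0,1]\times\mathbb{R}\to[-1,1]$ can be identified with a weak-$*$ compact subset of $L^\infty([0,1]\times\mathbb{R}, d x \otimes \gamma)$ (where $\gamma$ is the standard Gaussian), the linear terms are weak-$*$ continuous, the quadratic term $-\frac12\mathbb{E}[W(X,X')F(X,Z')F(X',Z')]$ — note the asymmetry in the $Z$-arguments makes this not literally a quadratic form in $F$ but a bilinear expression that is still weak-$*$ continuous after integrating out — and $-\mathbb{E}[G(F,\psi/\sigma^2)]$ is weak-$*$ upper semicontinuous because $G(\cdot,d)$ is convex and lower semicontinuous (here using Lemma \ref{lemma:elementary}(iv) and convexity of $G$ in $u$). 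Then the Weierstrass theorem gives the maximizer.

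The main obstacle I anticipate is the attainment claim, specifically handling the entropy term $\mathbb{E}[G(U, \psi(X)/\sigma^2)]$ near the boundary $U = \pm 1$, where $G$ equals a Kullback--Leibler divergence against a degenerate measure and can be $+\infty$; one must verify lower semicontinuity there (which Lemma \ref{lemma:elementary}(iv) provides) and that this does not obstruct compactness — the values $\pm 1$ are exactly the points where $G$ is finite again (it equals $\dkl(\pi_{\pm\infty}\|\pi_{(0,d)})$), so the function $u\mapsto G(u,d)$ on $[-1,1]$ is a genuine lower-semicontinuous convex function with possibly infinite values on an interior interval only if the prior $\pi$ has restricted support; under the standing assumption that $\{-1,1\}$ lie in the support of $\pi$ and $\pi$ is otherwise general, I expect $G(\cdot,d)$ to be finite on all of $[-1,1]$, which simplifies matters, but the argument should be written to tolerate $G \equiv +\infty$ on part of the interior without breaking upper semicontinuity of $-\mathbb{E}[G]$. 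A secondary technical point is making the conditional-expectation manipulations rigorous when $\psi(X)$ or $g(X)$ is merely $L^1$, not bounded — this requires only that $F_\nu$ is bounded (which it is) so that all the integrals converge, and $\|\partial G/\partial d\|_\infty \le \frac12$ from Lemma \ref{lemma:G_stability} controls the $G$ term uniformly.
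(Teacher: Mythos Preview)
Your argument for the equality of the two suprema is correct and in fact more direct than the paper's. The paper also forms the conditional mean $F(x,v)=\mathbb{E}[U\mid X=x,V=v]$ with $V=\Phi(Z)$ (equivalent to your $F_\nu$ composed with $\Phi^{-1}$), obtains the same Jensen bound on the entropy term, but then performs an additional monotone rearrangement of $F(x,\cdot)$ in its second argument and invokes the Hardy--Littlewood inequality to increase the cross term $\mathbb{E}[\sqrt{\psi(X)}\,F(X,V)\,\Phi^{-1}(V)]$. As your argument shows, this rearrangement step is not needed: your $F_\nu$ already lies in $\mathscr{F}$ and satisfies $\mathcal{G}_{W,g,\psi}(F_\nu)\ge \tilde{\mathcal{G}}_{W,g,\psi}(\nu)$ directly, since conditioning on $(X,Z)$ leaves all the linear and bilinear terms unchanged by the tower property and independence of the two copies.

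For attainment, the paper takes a shorter path than the one you outline. It works on the $\tilde{\mathscr{F}}$ side: $\tilde{\mathscr{F}}$ is tight (the $(X,Z)$ marginal is fixed and $|U|\le 1$) and closed under weak limits, hence weakly compact; and $\tilde{\mathcal{G}}_{W,g,\psi}$ is upper semicontinuous on $\tilde{\mathscr{F}}$, since the three ``energy'' terms are continuous by $L^1$ approximation of $W$, $g$, $\sqrt{\psi}$ (uniform integrability of $\sqrt{\psi(X)}UZ$ comes for free because the $(X,Z)$ marginal is fixed), while the entropy term is lower semicontinuous by Lemma~\ref{lemma:elementary}(iv) and Fatou. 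Weierstrass then gives a maximizer $\nu_0\in\tilde{\mathscr{F}}$, and your own $F_{\nu_0}$ is a maximizer of $\mathcal{G}_{W,g,\psi}$. Your proposed weak-$*$ argument on $\mathscr{F}\subset L^\infty([0,1]\times\mathbb{R},\,dx\otimes\gamma)$ can also be made to work, but verifying weak-$*$ continuity of the quadratic $W$-term on the unit ball requires an extra approximation step (e.g.\ replacing $W$ by a continuous kernel to obtain a compact operator), so the paper's route is more economical.
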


\begin{remark} 
\label{rem:case_analysis} 
Depending on whether $D_{\mathrm{KL}}(\pi_{\infty} \| \pi)$ and $D_{\mathrm{KL}}(\pi_{-\infty} \| \pi)$ are infinity or not, there are four possible cases. In our subsequent proofs, we consider the case $D_{\mathrm{KL}}(\pi_{\infty} \| \pi) < \infty$ and $D_{\mathrm{KL}}(\pi_{-\infty} \| \pi) = \infty$, noting that other cases follow by natural modifications. 
\end{remark} 

\begin{lemma}\label{lemma:tightness} 
Let $d(\cdot, \cdot)$ be the $2$-Wasserstein distance on $\mathrm{Pr}([0,1] \times \mathbb{R} \times [-1,1])$. For any $\delta>0$, set 
\begin{align}
\mathscr{B}_{d}( \tilde{\mathscr{F}}, \delta) = \{ \nu \in \mathrm{Pr}([0,1] \times \mathbb{R} \times [-1,1])  : \inf_{\nu' \in \tilde{\mathscr{F}}} d ( \nu , \nu') < \delta\}. \nonumber 
\end{align} 
Then there exists a sequence $\delta_p \to 0$ as $p \to \infty$ such that setting
\begin{align}
F_p = \{  \forall \mathbf{u} \in [-1,1]^p , \,\,\, \tilde{L}_p^{(\mathbf{u})}  \in \mathscr{B}_{d}( \tilde{\mathscr{F}}, \delta_p )   \}  \nonumber 
\end{align} 
we have, $\mathbb{P} ( F_p | \mathbf{X} ) = 1- o(1)$.
\end{lemma}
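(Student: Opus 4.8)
### Proof plan for Lemma~\ref{lemma:tightness}

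The plan is to show that the empirical measures $\tilde L_p^{(\mathbf u)}$ are, uniformly over $\mathbf u \in [-1,1]^p$, close in $2$-Wasserstein distance to an element of $\tilde{\mathscr F}$, and that the ``defect'' vanishes with high probability. Recall that a generic $\nu \in \tilde{\mathscr F}$ is characterized by three constraints on $(X,Z,U)\sim\nu$: (a) $X\sim U[0,1]$, (b) $Z\sim N(0,1)$, (c) $X\perp\!\!\!\perp Z$, together with $|U|\le 1$ which always holds for $\tilde L_p^{(\mathbf u)}$ since $u_i\in[-1,1]$. By construction in Definition~\ref{def:prob_dist}, under $\tilde L_p^{(\mathbf u)}$ the first marginal $X$ is exactly $U[0,1]$, and conditionally on $X$ falling in the $i$-th block, $Z=\xi_i$ and $U=u_i$ are deterministic; so the only failure of membership in $\tilde{\mathscr F}$ is that the $Z$-marginal is the discrete measure $\frac1p\sum_i\delta_{\xi_i}$ rather than $N(0,1)$, and that $X$ and $Z$ are not independent. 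The key point is that \emph{the bad set is a single explicit coupling defect depending only on $(\xi_1,\dots,\xi_p)$, not on $\mathbf u$}, which is what makes the ``$\forall\,\mathbf u$'' statement possible.

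First I would construct an explicit candidate $\nu'\in\tilde{\mathscr F}$ by a monotone rearrangement. Assume (as already done in the proof of Lemma~\ref{lem:off_diagonal}) that the indices are sorted so that the $\xi_i$ are increasing; let $\Phi$ be the standard normal cdf and define $\tilde\xi_i$ so that block $i$ is ``assigned'' the normal quantile mass over $[(i-1)/p, i/p]$. Concretely, take $\nu'$ to be the law of $(X, Z', U)$ where on block $i$ one sets $Z'=\Phi^{-1}(X)$ (the ``continuum'' normal variable) and $U=u_i$; then $Z'\sim N(0,1)$, $X\perp\!\!\!\perp$ nothing-to-check since $U$ is a deterministic function of $X$ only — wait, that still is not independence of $X,Z'$. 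The cleaner route is the standard one: couple $\tilde L_p^{(\mathbf u)}$ with the measure $\nu'$ that has $X\sim U[0,1]$, $Z = \Phi^{-1}(\lceil pX\rceil/p)$ replaced by $Z\sim N(0,1)$ \emph{independent} of $X$ via the obvious block-wise optimal transport, and bound the $W_2$ cost by a quantity that does not involve $\mathbf u$. In this coupling the $U$-coordinate is transported at zero cost (both sides use $u_{\lceil pX\rceil}$ on matching blocks), the $X$-coordinate is left fixed, and the entire cost is in the $Z$-coordinate: it equals $\frac1p\sum_{i=1}^p (\xi_i - m_i)^2$ plus a $1/p$-order within-block term, where $m_i = p\int_{(i-1)/p}^{i/p}\Phi^{-1}(t)\,dt$ is the ``ideal'' block value. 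Hence, for \emph{every} $\mathbf u$,
\begin{align*}
\inf_{\nu'\in\tilde{\mathscr F}} d\big(\tilde L_p^{(\mathbf u)},\nu'\big)^2 \;\le\; \frac{1}{p}\sum_{i=1}^p (\xi_i - m_i)^2 + \frac{C}{p} \;=:\; \Delta_p^2,
\end{align*}
a random variable free of $\mathbf u$.

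Second, I would show $\Delta_p \stackrel{P}{\to}0$; then the lemma follows by taking $\delta_p$ to be any deterministic sequence with $\delta_p\to0$ slowly enough that $\mathbb P(\Delta_p>\delta_p)\to 0$ (e.g. $\delta_p = \sqrt{\mathbb E\Delta_p^2}^{\,1/2}$, or via a diagonal argument), so that $F_p = \{\Delta_p \le \delta_p\}$ has probability $1-o(1)$ conditionally on $\mathbf X$ — note the $\xi_i$ are $N(0,1)$ regardless of $\mathbf X$, so the conditional and unconditional statements coincide. To see $\Delta_p\to 0$: the $\tilde\xi_i := \Phi^{-1}(i/p)$ (or $m_i$) are exactly the normal quantiles, so $\frac1p\sum_i (\tilde\xi_i - m_i')^2$ for any reasonable choice is the squared $W_2$ distance between the empirical cdf of $i/p$ pushed through $\Phi^{-1}$ and $N(0,1)$, which is $O(1/p)$ deterministically; the genuinely random part is $\frac1p\sum_i(\xi_i - \tilde\xi_i)^2$ where $\xi_{(i)}$ are the \emph{order statistics} of $p$ iid standard normals and $\tilde\xi_i$ their population quantiles. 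Controlling this is a classical fact: $\frac1p\sum_i (\xi_{(i)} - \Phi^{-1}(i/p))^2 = W_2^2(\hat F_p, N(0,1)) \to 0$ a.s. by the $L^2$ law of large numbers / Glivenko–Cantelli plus a tail truncation for the Gaussian (or one may cite a quantitative $W_2$ rate such as $O(\log p/p)$ for the empirical measure of iid Gaussians). I would spell this out via a truncation at level $\sqrt{2\log p}$ for the extreme order statistics and Csörgő–Révész-type quantile bounds, or simply invoke $W_2(\hat F_p,N(0,1))\to 0$ in probability.

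The main obstacle is purely the bookkeeping in the first step: verifying that the block-wise transport can be carried out so that its cost genuinely does not depend on $\mathbf u$ (one must transport the $U$-coordinate ``for free'' by matching block $i$ of the source to block $i$ of the target, which forces the $Z$-transport to be the induced one rather than the $W_2$-optimal one between $\frac1p\sum\delta_{\xi_i}$ and $N(0,1)$ — this is fine and only loses a constant, but needs to be stated carefully), and checking that the resulting $\nu'$ really lies in $\tilde{\mathscr F}$, i.e. has $Z$-marginal exactly $N(0,1)$ and $X\perp\!\!\!\perp Z$. One builds $\nu'$ so that, conditionally on $X$ in block $i$, $Z$ is distributed as $N(0,1)$ restricted and renormalized to $[\Phi^{-1}((i-1)/p),\Phi^{-1}(i/p)]$; then $Z\sim N(0,1)$ unconditionally, while $X\mid Z$ is uniform on its block, giving the required independence after the obvious check. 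Everything else — the concentration $\Delta_p\to0$ and the extraction of $\delta_p$ — is routine.
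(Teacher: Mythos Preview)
The paper omits the proof of this lemma, calling it straightforward, so there is nothing to compare against directly. Your overall strategy is the right one: the $(X,Z)$-marginal of $\tilde L_p^{(\mathbf u)}$ does not depend on $\mathbf u$, so the distance to $\tilde{\mathscr F}$ can be bounded by a single random quantity $\Delta_p$ depending only on $(\xi_1,\dots,\xi_p)$, and one then shows $\Delta_p\stackrel{P}{\to}0$. That reduction is the whole content of the lemma, and you have it.

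There is, however, a concrete gap in your explicit construction. Your candidate $\nu'$ sets, conditionally on $X'\in[(i-1)/p,i/p]$, $Z'$ to be $N(0,1)$ restricted to $[\Phi^{-1}((i-1)/p),\Phi^{-1}(i/p)]$. You then say ``$X\mid Z$ is uniform on its block, giving the required independence after the obvious check.'' This is false: given $Z'=z$, the block index $\lceil p\Phi(z)\rceil$ is determined, so $X'\mid Z'$ is uniform on \emph{that} block, not on all of $[0,1]$. Hence $X'\not\perp Z'$ and your $\nu'$ is not in $\tilde{\mathscr F}$. More generally, any coupling with $X'=X$ forces $Z'\perp X$ in the $\nu'$-marginal, and since $Z=\xi_{\lceil pX\rceil}$ is a deterministic function of $X$, this makes $Z'\perp Z$, so $\mathbb E[(Z-Z')^2]\not\to 0$.

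The clean fix is to decouple the $U$-coordinate from the construction entirely. Observe that $\tilde{\mathscr F}$ imposes \emph{no} constraint on $U'$ beyond $|U'|\le 1$. Hence, given \emph{any} coupling $\pi$ of the bivariate $(X,Z)$-marginal $\mu_{XZ}$ of $\tilde L_p^{(\mathbf u)}$ with $U[0,1]\otimes N(0,1)$, you may extend it by setting $U'=U=u_{\lceil pX\rceil}$; the resulting $(X',Z',U')$-marginal lies in $\tilde{\mathscr F}$ and the coupling cost is exactly the $\pi$-cost. Thus
\[
\sup_{\mathbf u\in[-1,1]^p}\inf_{\nu'\in\tilde{\mathscr F}} d\bigl(\tilde L_p^{(\mathbf u)},\nu'\bigr)^2 \;\le\; W_2^2\bigl(\mu_{XZ},\,U[0,1]\otimes N(0,1)\bigr)=:\Delta_p^2,
\]
which is indeed independent of $\mathbf u$. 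Now $\mu_{XZ}$ is the law of $(X,\xi_{\lceil pX\rceil})$; by the law of large numbers $\frac1p\sum_i f(i/p,\xi_i)\to\int_0^1\mathbb E[f(x,\xi)]\,dx$ for bounded Lipschitz $f$, giving weak convergence to the product, and $\frac1p\sum_i\xi_i^2\to 1$ gives convergence of second moments, so $\Delta_p\stackrel{P}{\to}0$. Your extraction of $\delta_p$ is then fine. You do not need order statistics or quantile-process estimates at all.
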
 

\noindent
The proof of Lemma \ref{lemma:tightness} is straightforward, and is thus omitted.  

\begin{lemma}\label{lemma:semicontinuity} 
Let $\{\mathbf{u}_p : p \geq 1\} \in [-1,1]^{p}$ be such that $\tilde{L}_{p}^{(\mathbf{u}_p)}$ converges weakly to $\nu_0 \in \tilde{\mathscr{F}}$. Then for any $W \in \mathcal{W}$, $g, \psi \in L^1$,
\begin{align}
\limsup_{p \to \infty} \tilde{\mathcal{G}}_{W,g,\psi}(\tilde{L}_p^{(\mathbf{u}_{p} )}) \leq \tilde{\mathcal{G}}_{W,g,\psi}(\nu_0). \nonumber 
\end{align} 
\end{lemma}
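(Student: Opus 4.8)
The plan is to establish upper semicontinuity term-by-term in the functional $\tilde{\mathcal{G}}_{W,g,\psi}$. Recall
\[
\tilde{\mathcal{G}}_{W,g,\psi}(\nu) = \frac{1}{\sigma^2}\Big[ -\tfrac{1}{2}\mathbb{E}[W(X_1,X_2)U_1U_2] + \mathbb{E}[g(X)U] + \mathbb{E}[\sqrt{\psi(X)}\,UZ]\Big] - \mathbb{E}\Big[G\Big(U,\tfrac{\psi(X)}{\sigma^2}\Big)\Big].
\]
First I would observe that along the sequence $\nu_p := \tilde{L}_p^{(\mathbf{u}_p)}$ the first-coordinate marginal is always exactly $U[0,1]$ (by construction in Definition~\ref{def:prob_dist}), and the second-coordinate marginal of each $\nu_p$ is the empirical measure $\frac1p\sum_i\delta_{\xi_i}$ of i.i.d.\ $N(0,1)$'s, which converges weakly to $N(0,1)$ and has uniformly bounded (indeed, with high probability convergent) second moments; the limit $\nu_0\in\tilde{\mathscr{F}}$ has $Z\sim N(0,1)$. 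Since $|U|\le 1$ and $X\in[0,1]$ along the whole sequence and in the limit, the only real subtlety is the lack of boundedness/continuity of the functions $W$, $g$, $\psi$, $\sqrt{\psi}$, and $G$.

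The linear-in-$\nu$ terms $\mathbb{E}[g(X)U]$ and $\mathbb{E}[\sqrt{\psi(X)}UZ]$ I would handle by truncation: approximate $g$ (resp.\ $\sqrt{\psi}$) in $L^1[0,1]$ by bounded continuous functions $g_K$ (resp.\ continuous truncations), control the error uniformly over $\nu\in\tilde{\mathscr{F}}_{2,4}$ using the $L^1$-stability bound of Lemma~\ref{lem:rate2}(i) together with $|U|\le1$ and (for the $Z$ term) $\mathbb{E}_\nu[Z^2]\le4$ and Cauchy--Schwarz, and then pass to the limit for the bounded-continuous surrogate by weak convergence of $\nu_p\to\nu_0$. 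For these terms we in fact get continuity, not merely semicontinuity. The quadratic term $-\tfrac{1}{2\sigma^2}\mathbb{E}[W(X_1,X_2)U_1U_2]$ is handled the same way after noting it depends only on the $(X,U)$-marginal of $\nu^{\otimes 2}$: approximate $W$ in cut norm (equivalently, since $|U|\le1$, use the $L^\infty\mapsto L^1$ characterization \eqref{eq:cut}) by a bounded continuous symmetric kernel $W_K$, bound the error uniformly over $\tilde{\mathscr{F}}_{2,4}$ via Lemma~\ref{lem:rate2}(i), and use that $\nu_p\to\nu_0$ weakly implies $\nu_p\otimes\nu_p\to\nu_0\otimes\nu_0$ weakly so the $W_K$-surrogate converges.

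The genuinely delicate term — and the main obstacle — is $-\mathbb{E}[G(U,\psi(X)/\sigma^2)]$, because $G(\cdot,d)$ is only \emph{lower} semicontinuous on $[-1,1]$ (it blows up / is defined as a KL divergence at $u=\pm1$, cf.\ Lemma~\ref{lemma:elementary}(iv)), and $\psi$ may be unbounded. The point is that lower semicontinuity of $G$ is exactly what we want, since it enters with a minus sign: if $G(\cdot,d)$ is l.s.c.\ and bounded below (it is nonnegative), then $(u,x)\mapsto -G(u,\psi(x)/\sigma^2)$ is upper semicontinuous in $u$, and a standard Portmanteau-type argument (write $-G$ as an infimum of bounded continuous functions, or use that for l.s.c.\ nonnegative $G$ and weakly convergent $\nu_p\to\nu_0$ one has $\liminf \mathbb{E}_{\nu_p}[G]\ge\mathbb{E}_{\nu_0}[G]$) gives $\limsup_p(-\mathbb{E}_{\nu_p}[G])\le -\mathbb{E}_{\nu_0}[G]$. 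To deal with the unboundedness of $\psi$ I would first truncate $\psi$ at level $K$, using $\sup_{u,d}|\partial G/\partial d|\le\tfrac12$ from Lemma~\ref{lemma:G_stability} to bound the resulting error by $\tfrac{1}{2\sigma^2}\|\psi - \psi\wedge K\|_1$ (plus, on the sequence side, the uniform integrability of $\frac1p\sum_i\delta_{\psi(i/p)}$ that follows from $w_{\mathbf D}\to\psi$ in $L^1$), reducing to the case of bounded $d$; there $G$ restricted to $[-1,1]\times[0,K]$ is l.s.c.\ and we apply the argument above, then send $K\to\infty$. Combining the three pieces and the truncation limits yields $\limsup_p\tilde{\mathcal{G}}_{W,g,\psi}(\nu_p)\le\tilde{\mathcal{G}}_{W,g,\psi}(\nu_0)$.
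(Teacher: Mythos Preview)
Your plan is correct and mirrors the paper's proof: show actual convergence of the three ``energy'' terms by approximating $W$, $g$, and $\sqrt{\psi}$ by bounded continuous surrogates (using the cut-norm/$L^1$ stability of Lemma~\ref{lem:rate2}(i) and, for the $Z$-term, Cauchy--Schwarz with $\mathbb{E}_{\nu_p}[Z^2]\le4$), and then handle the entropy term via lower semicontinuity of $G$ and Fatou. One small correction on the last term: truncating $\psi$ at level $K$ does not make $(x,u)\mapsto G(u,\psi_K(x)/\sigma^2)$ jointly lower semicontinuous, since $\psi_K$ remains merely measurable in $x$; what you actually need (and what the paper implicitly uses, though its proof is equally terse here) is to approximate $\psi$ in $L^1$ by a \emph{continuous} $\psi^{(\ell)}$---exactly as you already do for $g$ and $\sqrt\psi$ in the other terms---and control the resulting error via $|\partial G/\partial d|\le\tfrac12$ from Lemma~\ref{lemma:G_stability}, after which the integrand is jointly l.s.c.\ and your Portmanteau/Fatou step goes through.
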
 

\begin{lemma}\label{lemma:lower_bound_argument} 
Suppose we are in the setting of Theorem \ref{thm:var_conv_new}. 
Fix $F \in \mathscr{F}$ and $\delta, K >0$ and $p \geq 1$. Let $V_i \sim U([(i-1)/p, i/p])$ be independent of $\xi_i$ arising in Lemma \ref{lem:off_diagonal}. Set $d_i^{K} = d_i \mathbf{1}(|d_i| \leq K)$ and define 
\begin{align} 
\tilde{u}_i = \begin{cases} 
F(V_i, \xi_i) & \textrm{if} \,\,\,  F(V_i, \xi_i)  \geq -1+ \delta,  \\
\dot{c}(0, d_i^{K}) & \textrm{o.w.} 
\end{cases} \label{eq:u_defn} 
\end{align}
Then we have, for any $\varepsilon>0$
\begin{align}
\limsup_{K \to \infty} \limsup_{\delta \to 0} \limsup_{p \to \infty} \mathbb{P}\Big[  \Big| \frac{1}{p} \widetilde{M}_p(\mathbf{u}) - \mathcal{G}_{W,g,\psi}(F)  \Big| > \varepsilon  | \mathbf{X}\Big] =0. \nonumber  
\end{align} 
\end{lemma}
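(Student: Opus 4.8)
## Proof Proposal for Lemma \ref{lemma:lower_bound_argument}

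The plan is to show that the randomly constructed vector $\mathbf{u} = (\tilde u_i)$ essentially realizes the value $\mathcal{G}_{W,g,\psi}(F)$ when plugged into $\widetilde{M}_p/p$, by a sequence of approximations controlled by the three nested limits $p\to\infty$, then $\delta\to 0$, then $K\to\infty$. First I would unpack $\widetilde{M}_p(\mathbf{u})/p$ into its constituent pieces: the quadratic form $-\frac{1}{2\sigma^2 p}\mathbf{u}^{\mathrm{T}}A\mathbf{u}$, the Gaussian linear term $\frac{1}{\sigma^2 p}\sum_i u_i\sqrt{D_p(i,i)}\xi_i$, the signal term $\frac{1}{\sigma^2 p}\bbeta_0^{\mathrm{T}}\mathbf{X}^{\mathrm{T}}\mathbf{X}\mathbf{u}$, and the entropy term $-\frac1p\sum_i G(u_i,d_i)$. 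The goal is to match each of these to the corresponding term in $\mathcal{G}_{W,g,\psi}(F) = \frac{1}{\sigma^2}\big(-\frac12\mathbb{E}[W(X,X')F(X,Z')F(X',Z')] + \mathbb{E}[g(X)F(X,Z)] + \mathbb{E}[\sqrt{\psi(X)}F(X,Z)Z]\big) - \mathbb{E}[G(F(X,Z),\psi(X)/\sigma^2)]$, recalling that $g(x) = \int W(x,y)\phi(y)dy + \psi(x)\phi(x)$.

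The key steps, in order: (1) Show that replacing $\tilde u_i$ by $F(V_i,\xi_i)$ throughout changes $\widetilde M_p(\mathbf{u})/p$ by an amount that vanishes in the iterated limit — this is where the truncation at $-1+\delta$ and the bounded-variance truncation $d_i^K$ enter; the set of indices on which $\tilde u_i \ne F(V_i,\xi_i)$ has an empirical density controlled by $\mathbb{P}(F(X,Z) < -1+\delta)$ which $\to 0$ as $\delta\to 0$ since $F$ is $[-1,1]$-valued and, a.s., $F(X,Z) > -1$ outside the event where we've excised the problematic degenerate limit (this connects to Remark \ref{rem:case_analysis}: the case $D_{\mathrm{KL}}(\pi_{-\infty}\|\pi) = \infty$ forces us to keep $u_i$ bounded away from $-1$ to keep $G(u_i,d_i)$ finite, while near $+1$ it is fine). (2) For the quadratic term, use that $\text{tr}(A_p^2) = o(p)$ together with $d_\square(W_{pA_p},W)\to 0$: the empirical vector $w_{\mathbf{u},p}$ converges to $F(X,Z)$ in the appropriate sense, and by the counting lemma / cut-norm continuity of quadratic forms, $\frac1{p^2}\mathbf{u}^{\mathrm{T}}(pA_p)\mathbf{u} \to \mathbb{E}[W(X,X')F(X,Z)F(X',Z')]$; here one must be careful that the two coordinates of the quadratic form use independent Gaussian randomness $Z,Z'$, which is exactly why $F(X,Z')F(X',Z')$ appears in $\mathcal{G}$ — the point is that for $i\ne j$, $\xi_i$ and $\xi_j$ are independent, so conditionally the $i$-th and $j$-th factors involve independent noise; a law-of-large-numbers / U-statistic argument (or a martingale/concentration bound) gives convergence in probability. (3) For the linear Gaussian term, a straightforward second-moment computation shows $\frac1p\sum_i u_i\sqrt{D_p(i,i)}\xi_i$ concentrates around its expectation, and using $w_{\mathbf D}\to\psi$ in $L^1$ plus the independence of $\xi_i$ from $V_i$, the limit is $\mathbb{E}[\sqrt{\psi(X)}F(X,Z)Z]$. (4) For the signal term $\frac1{\sigma^2 p}\bbeta_0^{\mathrm{T}}\mathbf{X}^{\mathrm{T}}\mathbf{X}\mathbf{u} = \frac1{\sigma^2 p}\bbeta_0^{\mathrm{T}}(A_p + D_p)\mathbf{u}$, combine $w_{\bbeta_0}\to\phi$, $d_\square(W_{pA_p},W)\to 0$, and $w_{\mathbf D}\to\psi$ to get the limit $\frac1{\sigma^2}\mathbb{E}[(\int W(X,y)\phi(y)dy)F(X,Z)] + \frac1{\sigma^2}\mathbb{E}[\psi(X)\phi(X)F(X,Z)] = \frac1{\sigma^2}\mathbb{E}[g(X)F(X,Z)]$. (5) For the entropy term, $\frac1p\sum_i G(u_i, d_i)$: using the uniform Lipschitz bound $|\partial G/\partial d|\le \frac12$ from Lemma \ref{lemma:G_stability} to replace $d_i$ by $d_i^K$ up to an error $\frac1{2p}\sum_i d_i\mathbf{1}(|d_i|>K)\to 0$ (by uniform integrability of the empirical measure of $\{d_i\}$, which follows from assumption (ii) of Theorem \ref{thm:var_conv_new}), and then a law of large numbers over the i.i.d.-like block structure gives $\mathbb{E}[G(F(X,Z),\psi(X)/\sigma^2)]$; the truncation at $-1+\delta$ keeps $G(\tilde u_i, d_i^K)$ bounded so dominated convergence applies.

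The main obstacle I anticipate is step (2), the convergence of the quadratic form $\frac{1}{p^2}\mathbf{u}^{\mathrm{T}}(pA_p)\mathbf{u}$ when $\mathbf{u}$ is a \emph{random} vector built from the $\xi_i$'s. Cut-norm convergence $d_\square(W_{pA_p},W)\to 0$ gives control of $\int W_{pA_p}(x,y)f(x)g(y)\,dx\,dy$ for fixed bounded test functions, but here both test functions are the random profile $w_{\mathbf{u},p}$, which is correlated with the kernel's discretization through the index; moreover we need to pass from the bilinear pairing against $w_{\mathbf{u},p}\otimes w_{\mathbf{u},p}$ (same noise in both slots) to the expectation $\mathbb{E}[W(X,X')F(X,Z')F(X',Z')]$ (independent noise). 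The resolution is to note $\text{tr}(A_p^2) = o(p)$ kills the diagonal-type contribution $\sum_i A_p(i,j)^2 \cdot(\text{stuff})$, so that off-diagonal terms dominate and the $i\ne j$ independence of $(\xi_i,V_i)$ versus $(\xi_j,V_j)$ lets a conditional second-moment / variance bound (conditioning on $\mathbf{X}$, i.e.\ on $A_p$) show the quadratic form concentrates around $\frac{1}{p^2}\sum_{i\ne j}(pA_p)(i,j)\,\bar F_i \bar F_j$ where $\bar F_i = \mathbb{E}[F(V_i,\xi_i)]$, and this in turn converges to the stated expectation by cut-norm continuity applied to the now-deterministic profile $(\bar F_i)$. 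I would also need the a priori bound $\text{tr}(A_p^2)=o(p)$ to ensure $\|pA_p\|$ in cut norm stays bounded (via (ii) and the relation $\|W_{pA_p}\|_\square \le \|W_{pA_p}\|_1$, bounded by $\sum_{i\ne j}|pA_p(i,j)|/p^2$, which combined with $\text{tr}(A_p^2)=o(p)$ and \eqref{eq:weak}-type control keeps things finite). All other steps are routine second-moment concentration plus the $L^1$-continuity lemmas (Lemma \ref{lem:rate2}) already available.
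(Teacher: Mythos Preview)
Your proposal is correct and follows essentially the same route as the paper: decompose $\widetilde M_p(\mathbf u)/p$ term by term, use law-of-large-numbers/second-moment concentration to replace the random entries $\tilde u_i$ by their expectations, and invoke the cut-norm and $L^1$ convergence assumptions of Theorem~\ref{thm:var_conv_new} together with the $d$-Lipschitz bound on $G$ from Lemma~\ref{lemma:G_stability} to pass to the limiting functional. The only noteworthy difference is in the quadratic form: the paper first truncates $W$ to a bounded kernel $W^{(K)}=W\mathbf 1(|W|\le K)$ (so that the corresponding matrix entries satisfy $|\mathcal M_p(i,j)|\le K/p^2$) and then applies a trivial WLLN, whereas you propose to control the variance directly via $\mathrm{tr}(A_p^2)=o(p)$ (hence $\|A_p\|_2^2\le \mathrm{tr}(A_p^2)=o(p)$). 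Both arguments are valid; the paper's truncation is slightly cleaner bookkeeping, while your variance bound avoids introducing an extra truncation parameter.
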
 

\noindent
We prove Theorem~\ref{thm:var_conv_new} assuming Lemma \ref{lemma:sup_equiv}, \ref{lemma:semicontinuity} and \ref{lemma:lower_bound_argument}. The corresponding proofs are deferred to the end of the section. 

\begin{proof}[Proof of Theorem~\ref{thm:var_conv_new}]
Using Lemma~\ref{lem:off_diagonal}, it suffices to prove that 
\begin{align}
\frac{1}{p} \sup_{\mathbf{u} \in [-1,1]^p} \widetilde{M}_p({\bf u}) \stackrel{P|\mathbf{X}}{\to}  \sup_{F \in \mathscr{F}}  \mathcal{G}_{W,g,\psi} (F). \nonumber 
\end{align} 

\vspace{2pt}
\noindent
\textbf{Upper bound:} 
Note that 
\begin{align}
\frac{1}{p}  \widetilde{M}_p({\bf u})\stackrel{(b)}{=} & \tilde{\mathcal{G}}_{W_{p A_p},  W_{p A_p} \cdot w_{\bbeta_0} + w_{\bbeta_0} w_{\mathbf{D}}, w_{\sigma^2 \mathbf{d}} } (\tilde{L}_p^{(\mathbf{u})} ) \nonumber \\ 
\stackrel{(c)}{=} &  \tilde{\mathcal{G}}_{W,  g, \psi } (\tilde{L}_p^{(\mathbf{u})} )  +  \mathcal{E}_p^{(2)} ({\bf u}), \nonumber
\end{align} 
where $\sup_{\mathbf{u} \in [-1,1]^p}   |\mathcal{E}_p^{(2)} ({\bf u})|   \stackrel{P|\mathbf{X}}{\to} 0$ as $p \to \infty$. Here, $(b)$ uses the definition \eqref{eq:functional}, and $(c)$ uses Lemma~\ref{lem:rate2} part (ii). To invoke Lemma \ref{lem:rate2}, we use the fact that 
\begin{align}
\mathbb{P}[ \forall \mathbf{u} \in [-1,1]^p, \tilde{L}_p^{(\mathbf{u})} \in \tilde{\mathscr{F}}_{2,4} ] = \mathbb{P} \Big[ \sum_{i=1}^{p}  \xi_i^2 \leq 4 p \Big] \to 1 \label{eq:event_equiv} 
\end{align} 
as $p \to \infty$.


To complete the upper bound, invoking Lemma \ref{lemma:sup_equiv}, it suffices to establish that for all $\varepsilon>0$ 
\begin{align}
\mathbb{P}\Big[ \sup_{\mathbf{u} \in [-1,1]^p} \tilde{\mathcal{G}}_{W,g,\psi}(\tilde{L}_p^{(\mathbf{u})}) <  \sup_{\nu \in \tilde{\mathscr{F}}} \tilde{\mathcal{G}}_{W,g,\psi}(\nu) + \varepsilon | \mathbf{X} \Big] = 1 - o(1) . \label{eq:thm2_conclusion} 
\end{align} 
We first turn to the proof of \eqref{eq:thm2_conclusion} by contradiction. Suppose there exists  $\varepsilon>0$, such that setting  
\begin{align}
E_p  =  \Big\{  \sup_{\mathbf{u} \in [-1,1]^p} \tilde{\mathcal{G}}_{W,g,\psi}(\tilde{L}_p^{(\mathbf{u})}) > \sup_{\nu \in \tilde{\mathscr{F}}} \tilde{\mathcal{G}}_{W,g,\psi}(\nu)  + \varepsilon \Big\}  . \nonumber 
\end{align} 
we have $\limsup \, \mathbb{P}[ E_p | \mathbf{X} ] >0$. 
Using Lemma \ref{lemma:tightness}, we have, $\limsup \mathbb{P}[ F_p \cap E_p ] >0$. Therefore, there exists a subsequence $p_k \to \infty$ along which $E_{p_k} \cap F_{p_k} \neq \emptyset$. 

Consequently, there exists $\bm{\xi}_{p_k} := (\xi_{1, p_k} , \cdots, \xi_{p_k, p_k}) \in \mathbb{R}^{ p_k}$, $\bm{\xi}_{p_k} \in E_{p_k} \cap F_{p_k}$ and $\mathbf{u}_{p_k} \in [-1,1]^{p_k}$ such that 
\begin{align}
\tilde{\mathcal{G}}_{W,g,\psi}(\tilde{L}_p^{(\mathbf{u}_{p_k} )}) > \sup_{\nu \in \tilde{\mathscr{F}}} \tilde{\mathcal{G}}_{W,g,\psi}(\nu)  + \varepsilon, \,\,\,\,\, \tilde{L}_{p_k}^{(\mathbf{u}_{p_k})}  \in \mathscr{B}_{d}( \tilde{\mathscr{F}}, \delta_p ) . \nonumber
\end{align}


This in turn implies that the sequence $\{ \tilde{L}_{p_k}^{(\mathbf{u}_{p_k})} : k \geq 1\} $  is tight, and hence has a subsequence converging weakly to $\nu_0 \in \tilde{\mathscr{F}}$. Assuming without loss of generality that $\tilde{L}_{p_k}^{(\mathbf{u}_{p_k})} \stackrel{d}{\to} \nu_0$, the desired contradiction follows once we show that 
\begin{align}
\limsup_{k \to \infty} \tilde{\mathcal{G}}_{W,g,\psi}(\tilde{L}_p^{(\mathbf{u}_{p_k} )}) \leq \tilde{\mathcal{G}}_{W,g,\psi}(\nu_0). \label{eq:contradiction1} 
\end{align} 

But this follows from Lemma \ref{lemma:semicontinuity}.

\vspace{5pt} 
\noindent
\textbf{Lower Bound:} 
 It suffices to show that for all $\varepsilon >0$, 
\begin{align}
\mathbb{P}\Big[ \frac{1}{p} \sup_{\mathbf{u} \in [-1,1]^p} \widetilde{M}_p({\bf u}) > \sup_{F \in \mathscr{F}}  \mathcal{G}_{W,g,\psi} (F) - \varepsilon | \mathbf{X} \Big] = 1 -o(1). \nonumber 
\end{align} 
To this end, let $F \in \mathscr{F}$ satisfy 
\begin{align}
\mathcal{G}_{W,g,\psi} (F) > \sup_{F \in \mathscr{F}}  \mathcal{G}_{W,g,\psi} (F) - \frac{\varepsilon}{2}. \nonumber 
\end{align} 
Note that if $\mathbb{E}[G(F(X, Z), \psi(X)] = \infty$, then the lower bound is trivial. Thus we assume, without loss of generality, that $\mathbb{E}[G(F(X, Z), \psi(X)] < \infty$. Next, fixing $K>0$, we have, 
\begin{align}
\mathcal{G}_{W,g,\psi_K} (F) = \mathcal{G}_{W,g,\psi} (F) + o_K(1), \label{eq:psi_trunc} 
\end{align} 
where $\psi_K(x) := \psi(x) \mathbf{1}(\psi(x) \leq K)$, and we have used   Lemma \ref{lem:rate2} Part (i). Further, setting $d_i^{K} = \min\{ d_i , K \}$, a similar argument gives
\begin{align}
& \tilde{\mathcal{G}}_{W_{p A_p},  W_{p A_p} \cdot w_{\bbeta_0} + w_{\bbeta_0} w_{\mathbf{D}}, w_{\sigma^2 \mathbf{d}} } (\tilde{L}_p^{(\mathbf{u})} ) \nonumber \\
 &=  \tilde{\mathcal{G}}_{W_{p A_p},  W_{p A_p} \cdot w_{\bbeta_0} + w_{\bbeta_0} w_{\mathbf{D}}, w_{\sigma^2 \mathbf{d}^{K}} } (\tilde{L}_p^{(\mathbf{u})} ) + o_K(1). \label{eq:tildeG-trunc}
\end{align}
  For any $p \geq 1$, let $\xi_1, \cdots, \xi_p$ denote the iid $\mathcal{N}(0,1)$ random variables arising in the optimization problem $M_p$. For each $i \in [p]$, generate $V_i \sim U([(i-1)/p, i/p])$ independent of each other, and of the $\xi_i$ variables. 
Fixing $\delta>0$, we set 
\begin{align} 
\tilde{u}_i = \begin{cases} 
F(V_i, \xi_i) & \textrm{if} \,\,\,  F(V_i, \xi_i)  \geq -1+ \delta,  \\
\dot{c}(0, d_i^{K}) & \textrm{o.w.} 
\end{cases}  \label{eq:u-tilde_defn} 
\end{align} 

\noindent 
Using Lemma \ref{lemma:lower_bound_argument}, for any $\varepsilon>0$, there exists $\delta, K>0$ (depending on $\varepsilon$), such that with probability at least $1- \varepsilon$, 
\begin{align}
\frac{1}{p} \sup_{\mathbf{u} \in [-1,1]^p} \widetilde{M}_p({\bf u}) \geq \frac{1}{p} \widetilde{M}_p(\tilde{\mathbf{u}}) \geq \mathcal{G}_{W,g,\psi} (F) -  \varepsilon \geq   \sup_{F \in \mathscr{F}}  \mathcal{G}_{W,g,\psi} (F) - 2 \varepsilon. \nonumber 
\end{align}  

\noindent
As $\varepsilon>0$ is arbitrary, this completes the proof of the lower bound. 
\end{proof} 

\noindent
It remains to prove Lemma \ref{lemma:sup_equiv}, \ref{lemma:semicontinuity} and \ref{lemma:lower_bound_argument}. We establish each result in turn. 

\begin{proof}[Proof of Lemma \ref{lemma:sup_equiv}]
To begin, note that for $F \in \mathscr{F}$, $(X,Z, F(X,Z)) \sim \nu  \in \tilde{\mathscr{F}}$. Therefore, 
\begin{align} 
\sup_{ \nu \in \tilde{\mathscr{F}} } \tilde{\mathcal{G}}_{W,g,\psi} (\nu) \geq  \sup_{F \in \mathscr{F}}  \mathcal{G}_{W,g,\psi} (F). \nonumber 
\end{align} 
We now turn to the reverse inequality.  Fix $(X,Z,U) \sim \nu \in \tilde{\mathscr{F}}$. 
Setting $V:= \Phi(Z)$ note that $X,V$ are $U([0,1])$ iid. Define $F(X,V) = \mathbb{E}[U|X,V]$. Note that 
\begin{align}
\mathbb{E}[W(X, X') U U' ] &= \mathbb{E}[W(X, X') F(X, V) F(X', V')] \nonumber \\
 \mathbb{E}[g(X) U ] &= \mathbb{E}[g(X) F(X,V)] ,\nonumber \\
 \mathbb{E} \Big[G\Big(U, \frac{\psi(X)}{\sigma^2} \Big) \Big] &\geq \mathbb{E} \Big[G \Big(F(X,V), \frac{\psi(X)}{\sigma^2} \Big) \Big], \nonumber 
\end{align}  
where the final step follows from Jensen's inequality and the observation that $\frac{\partial^2}{\partial u^2} G(u,d) \geq 0$ from Lemma \ref{lemma:G_stability}. This implies 
\begin{align}
\tilde{\mathcal{G}}_{W,g,\psi} (\nu) 
&\leq \frac{1}{\sigma^2} \Big[ - \frac{1}{2} \mathbb{E}[W(X, X') F(X,V) F(X,V')] + \mathbb{E}[ g(X) F(X,V) ] + \mathbb{E}[\sqrt{\psi(X)} F(X,V) \Phi^{-1}(V)] \Big] \nonumber \\
&- \mathbb{E}\Big[G \Big(F(X,V), \frac{\psi(X)}{\sigma^2} \Big) \Big]. \nonumber 
\end{align} 
Finally, noting that $\Phi^{-1}$ is strictly increasing, we have, 
\begin{align}
\mathbb{E}[\sqrt{\psi(X)} F(X,V) \Phi^{-1}(V)] \leq \mathbb{E}[\sqrt{\psi(X)} F^*(X,V) \Phi^{-1}(V)] , \nonumber 
\end{align} 
where for each $x \in [0,1]$, $F^*(x,\cdot)$ is the monotone rearrangment of $F(x, \cdot)$. This last step follows from Hardy-Littlewood inequality. Observe that as the Uniform distribution is invariant under measure preserving transformations, the other terms in the functional above are unchanged by this rearrangement. Thus 
\begin{align}
 \tilde{\mathcal{G}}_{W,g,\psi} (\nu) \leq \mathcal{G}_{W,g,\psi} (F^*). \nonumber 
\end{align} 
Since $F^* \in \mathscr{F}$, the proof is complete. 

Finally, we show that both suprema are attained. To this end, observe that by Lemma \ref{lemma:elementary} Part (iv),  
 $\tilde{\mathcal{G}}_{W,g,\psi}(\cdot)$ is upper semi-continuous. Moreover, $\tilde{\mathscr{F}}$ is compact under weak topology---this follows as the collection $\tilde{\mathscr{F}}$ is tight, and thus pre-compact by Prokhorov's theorem. Thus a maximum exists. To see that $\mathcal{G}_{W,\phi,\psi}(\cdot)$ attains it's maximum, start with a maximizer $\nu_0$ of $\tilde{\mathcal{G}}_{W,\phi,\psi}(\cdot)$, and take $F_0^*$ as obtained in the proof above. 
\end{proof} 

\begin{proof}[Proof of Lemma \ref{lemma:semicontinuity}]
Observe that we can approximate $W$ in $L^1$ by a  continuous function $W^{(\ell)}$. 
This implies 
\begin{align}
\mathbb{E}_{\tilde{L}_p^{(\mathbf{u})}} [W(X_1,X_2) U_1 U_2] &= \mathbb{E}_{\tilde{L}_p^{(\mathbf{u})}} [W^{(\ell)}(X_1,X_2) U_1 U_2]  + o_\ell(1) \nonumber \\
&\stackrel{p \to \infty}{\to} \mathbb{E}_{\nu_0} [W^{(\ell)}(X_1,X_2) U_1 U_2] + o_{\ell}(1) = \mathbb{E}_{\nu_0} [W(X_1,X_2) U_1 U_2] + o_{\ell}(1). \nonumber 
\end{align} 
Upon sending $\ell \to \infty$, we conclude that $\mathbb{E}_{\tilde{L}_p^{(\mathbf{u})}} [W(X_1,X_2) U_1 U_2] \to  \mathbb{E}_{\nu_0} [W(X_1,X_2) U_1 U_2]$. A similar argument shows that 
%
\begin{align}
\mathbb{E}_{\tilde{L}_p^{(\mathbf{u})}}[g(X) U] \stackrel{p \to \infty}{\to} \mathbb{E}_{\nu_0}[g(X)  U].\nonumber 
\end{align} 
Next, we note that we can approximate $\psi$ in $L^1$ by a sequence of  continuous functions $\{\psi^{(\ell)}: \ell \geq 1\}$, and thus, by Cauchy-Schwarz
\begin{align}
| \mathbb{E}_{\tilde{L}_p^{(\mathbf{u})}} [ \sqrt{\psi(X)} U Z] - \mathbb{E}_{\tilde{L}_p^{(\mathbf{u})}} [ \sqrt{\psi^{(\ell)}(X)} U Z] | &\leq \mathbb{E}_{\tilde{L}_p^{(\mathbf{u})}} [|\sqrt{\psi}^{(\ell)}(X) - \sqrt{\psi(X)}|\cdot|Z|] \nonumber \\
&\leq \sqrt{\mathbb{E}_{\tilde{L}_p^{(\mathbf{u})}} [(\sqrt{\psi^{(\ell)}(X)} - \sqrt{\psi(X)})^2] \mathbb{E}_{\tilde{L}_p^{(\mathbf{u})}} [ Z^2]}  \nonumber\\
&\leq \sqrt{2 \int_{0}^{1} | \psi^{(\ell)} (x) - \psi(x) | \mathrm{d}x} = o_{\ell}(1). \nonumber 
\end{align} 
The last inequality uses that $\tilde{L}_{p}^{(\mathbf{u}_{p}) } \in \tilde{\mathscr{F}}_{2,4}$. 
 This implies 
\begin{align}
\mathbb{E}_{\tilde{L}_p^{(\mathbf{u})}} [ \sqrt{\psi(X)} U Z] &= \mathbb{E}_{\tilde{L}_p^{(\mathbf{u})}} [ \sqrt{\psi^{(\ell)}(X)} U Z] + o_{\ell}(1) \nonumber \\
&\stackrel{p \to \infty}{\to} \mathbb{E}_{\nu_0} [ \sqrt{\psi^{(\ell)}(X)} U Z] + o_{\ell}(1)  \nonumber \\
&= \mathbb{E}_{\nu_0} [ \sqrt{\psi(X)} U Z]  + o_{\ell}(1).  \nonumber 
\end{align} 
Sending $\ell \to \infty$, we have $\mathbb{E}_{\tilde{L}_p^{(\mathbf{u})}} [ \sqrt{\psi(X)} U Z] \to \mathbb{E}_{\nu_0} [ \sqrt{\psi(X)} U Z]$ as $p \to \infty$.  

The desired conclusion follows once we establish that 
\begin{align}
\liminf_{p \to \infty} \mathbb{E}_{\tilde{L}_p^{(\mathbf{u})}}\Big [G \Big(U, \frac{\psi(X)}{\sigma^2} \Big)  \Big] \geq \mathbb{E}_{\nu_0} \Big[G \Big(U, \frac{\psi(X)}{\sigma^2}  \Big) \Big]. \nonumber 
\end{align}
%
%
But this follows on using Fatou's lemma and the lower semicontinuity of $G \Big(\cdot, \frac{ \psi(X)}{\sigma^2} \Big)$  (Lemma \ref{lemma:elementary} part (iv)). 
\end{proof}

\begin{proof}[Proof of Lemma \ref{lemma:lower_bound_argument}] 
The definition of $\mathbf{u}$ in \eqref{eq:u_defn} implies 
\begin{align}
G(u_i, d_i^{K} ) = \begin{cases}
G(F(V_i, \xi_i), d_i^{K}) & \textrm{if} \,\,\, F(V_i, \xi_i) \geq  -1 + \delta, \\
0 &\textrm{o.w.} 
\end{cases} \nonumber 
\end{align} 
Observe that 
\begin{align}
\frac{1}{p} \sum_{i=1}^{p} G(u_i,  d_i^{K} ) = &\frac{1}{p} \sum_{i=1}^{p} G( F(V_i, \xi_i) , d_i^K) \mathbf{1}(  F(V_i, \xi_i) \geq -1 + \delta)  \label{eq:var_exp4} 
\end{align} 
Now, the function $G : [-1+ \delta,1] \times [0, K] \to \mathbb{R}^+$ is bounded, 
and thus by Chebychev inequality, 
\begin{align}
&\frac{1}{p} \sum_{i=1}^{p} G( F(V_i, \xi_i), d_i^K) \mathbf{1}(  F(V_i, \xi_i)  \geq -1 + \delta) \nonumber \\
-& \frac{1}{p} \sum_{i=1}^{p} \mathbb{E}\Big[ G( F(V_i, \xi_i) , d_i^{K}) \mathbf{1}( F(V_i, \xi_i) \geq -1 + \delta) \Big] \stackrel{P|\mathbf{X}}{\to} 0 . \label{eq:var_exp1} 
\end{align} 
Also, note that 
\begin{align}
& \frac{1}{p} \sum_{i=1}^{p} \mathbb{E}\Big[ G( F(V_i, \xi_i) , d_i^K) \mathbf{1}(  F(V_i, \xi_i)  \geq -1 + \delta) \Big] \nonumber \\
  &= \mathbb{E}[G(F(X,Z), w_{\mathbf{d}^K,p}(X)) \mathbf{1}(  F(X, Z)  \geq -1 + \delta)] , \nonumber \\
  &= \mathbb{E}\Big[G\Big(F(X,Z), \frac{\psi_K(X) }{\sigma^2} \Big) \mathbf{1}(  F(X, Z)  \geq -1 + \delta) \Big]  +o(1)  \label{eq:var_exp3} 
\end{align} 
where $X \sim U([0,1])$ and $Z \sim \mathcal{N}(0,1)$ are independent. The last display uses Lemma \ref{lemma:G_stability} along with the fact that $w_{\mathbf{d}^K,p} \to \psi_K/\sigma^2$ in $L^1$ for almost every $K>0$. 
Now, the assumptions $\mathbb{E}[G(F(X, Z), \psi(X)/\sigma^2)] < \infty$ and $D_{\mathrm{KL}}(\pi_{-\infty} \| \pi) = \infty$ together imply $ \mathbb{P}[F(X,Z) = -1] = 0$. In combination with Dominated Convergence Theorem, this implies 
\begin{align}
\lim_{\delta \to 0} \mathbb{E} \Big[G \Big(F(X,Z), \frac{\psi_K(X)}{\sigma^2}  \Big) \mathbf{1}(  F(X, Z)  \geq -1 + \delta) \Big] =  \mathbb{E}\Big[G \Big(F(X,Z),  \frac{\psi_K(X)}{\sigma^2}  \Big) \Big]. \label{eq:var_exp2} 
\end{align} 
Combining \eqref{eq:var_exp4}, \eqref{eq:var_exp1}, \eqref{eq:var_exp3} and \eqref{eq:var_exp2}, for any $\varepsilon>0$, we have, 
\begin{align}
\limsup_{\delta \to 0} \limsup_{p \to \infty} \mathbb{P} \Big[ \Big| \frac{1}{p} \sum_{i=1}^{p} G(u_i,  d_i^{K} )  - \mathbb{E} \Big[G\Big(F(X,Z), \frac{\psi_K(X)}{\sigma^2} \Big) \Big ] \Big| > \varepsilon | \mathbf{X} \Big] =0. \label{eq:G_lim} 
\end{align} 
%
%
%
We now claim that for any $\varepsilon>0$, 
\begin{align}
\begin{split} \label{eq:var_otherterms} 
\limsup_{\delta \to 0} \limsup_{p \to \infty} \mathbb{P}\Big[ \Big| \frac{1}{p} \sum_{i,j=1}^{p} A_p(i,j) u_i u_j  - \mathbb{E}[W(X_1, X_2) F(X_1, Z_1) F(X_2, Z_2)] \Big|  > \varepsilon | \mathbf{X} \Big] = 0.  \\
\limsup_{\delta \to 0} \limsup_{p \to \infty} \mathbb{P}\Big[ \Big| {\bf u}^{\mathrm{T}} A_p \bbeta_0+{\bf u}^{\mathrm{T}} D_p \bbeta_0 - \mathbb{E}[g(X) F(X,Z)] \Big|  > \varepsilon | \mathbf{X} \Big] = 0.  \\
\limsup_{\delta \to 0} \limsup_{p \to \infty} \mathbb{P}\Big[ \Big| \sum_{i=1}^p u_i \sqrt{D_p(i,i)} \xi_i - \mathbb{E}[\sqrt{\psi(X)} UZ] \Big|  > \varepsilon | \mathbf{X} \Big] = 0. 
\end{split} 
\end{align} 

We first turn to the quadratic form. We have, 
\begin{align}
\frac{1}{p} \sum_{ij=1}^{p} A_p(i,j) u_i u_j & = \mathbb{E}_{\tilde{L}_p^{(\mathbf{u})}} [W_{pA_p}(X_1, X_2) U_1 U_2]  \nonumber \\
&\stackrel{(a)}{=} \mathbb{E}_{\tilde{L}_p^{(\mathbf{u})}} [W(X_1, X_2) U_1 U_2]  +o(1) \nonumber  \\
&\stackrel{(b)}{=} \mathbb{E}_{\tilde{L}_p^{(\mathbf{u})}} [W^{(K)}(X_1, X_2) U_1 U_2]  + o_K(1). \label{eq:quad_1} 
\end{align} 
Here, the step (a) uses Assumption (a), and step (b) uses $W^{(K)} := W \mathbf{1}(|W| \leq K) \stackrel{L^1}{\to} W$. We have, setting $\mathcal{M}_p(i,j) := \int_{[(i-1)/p, i/p]} \int_{[(j-1)/p, j/p]} W^{(K)}(x,y) \mathrm{d}x \mathrm{d}y$, 
\begin{align}
\mathbb{E}_{\tilde{L}_p^{(\mathbf{u})}} [W^{(K)}(X_1, X_2) U_1 U_2] &= \sum_{i,j=1}^{p} \mathcal{M}_{p}(i,j) u_i u_j  = \sum_{i,j=1}^{p} \mathcal{M}_{p}(i,j) \mathbb{E}[u_i] \mathbb{E}[ u_j] + o_P(1) \label{eq:quad_2} 
\end{align} 
by the weak law of large numbers, and the observation $| \mathcal{M}_p(i,j)| \leq K/p^2$. Here, the $o_P(1)$ term converges to zero in probability under the joint distribution of $\{(V_i,\xi_i): 1 \leq i \leq p\}$.  Further, we have,  
\begin{align}
\Big|\sum_{i,j=1}^{p} \mathcal{M}_{p}(i,j) \mathbb{E}[u_i] \mathbb{E}[ u_j] - \sum_{i,j=1}^{p} \mathcal{M}_p(i,j) \mathbb{E}[F(V_i, Z)] \mathbb{E}[F(V_j,Z)] \Big| &\leq    2 K  \mathbb{P}[ F(X,Z) \leq -1 +\delta], \label{eq:quad_3} 
\end{align} 
where $X \sim U([0,1])$ and $Z \sim \mathcal{N}(0,1)$ are independent. Finally, 
\begin{align}
\sum_{i,j=1}^{p} \mathcal{M}_p(i,j) \mathbb{E}[F(V_i, Z)] \mathbb{E}[F(V_j,Z)]  &= \mathbb{E}[W_{\mathcal{M}_p}(X,X') F(X, Z) F(X', Z')] \nonumber \\
&\stackrel{(a)}{=} \mathbb{E}[W^{(K)}(X,X') F(X, Z) F(X', Z')] + o(1) \nonumber \\
&\stackrel{(b)}{=} \mathbb{E}[W(X,X') F(X, Z) F(X', Z')] + o_K(1) \label{eq:quad_4} 
\end{align} 
where (a) uses $\|W_{\mathcal{M}_p} - W^{(K)} \|_1 \to 0$ as $p \to \infty$ and (b) uses $\|W - W^{(K)} \|_1 \to 0$ as $K \to \infty$. Combining \eqref{eq:quad_1}, \eqref{eq:quad_2}, \eqref{eq:quad_3}, \eqref{eq:quad_4}, the first conclusion of \eqref{eq:var_otherterms} follows upon sending $p \to \infty$, $\delta \to 0$ and $K \to \infty$. 

The other conclusions of \eqref{eq:var_otherterms} follow using analogous arguments, and are thus omitted. 
\end{proof} 

\subsection{Proof of Theorem~\ref{thm:pure_state} and related results} 
\label{subsec:proof_thm3}
The following continuity statement will be critical for the proof of Theorem \ref{thm:pure_state}. The proof is deferred to the Appendix. 
\begin{lemma}
\label{lem:map_cont} 
Define a map $m : \mathrm{Pr}\Big([0,1] \times \mathbb{R} \times [-1,1] \Big) \times \mathcal{W} \times [0,1] \to \mathbb{R}$, $(\mu , W, x) \mapsto \mathbb{E}_{(X,Z,U) \sim \mu} [W(x,X') U']$. 
\begin{itemize}
\item[(i)] Fix $x \in [0,1]$, $\mu \in \mathrm{Pr}\Big([0,1] \times \mathbb{R} \times [-1,1] \Big)$, and $W, W' \in \mathcal{W}$. For any $\varepsilon>0$, set $S(\varepsilon)= \{ x : |m(\mu, W, x) - m(\mu, W', x) | > \varepsilon\}$. For $X \sim U([0,1])$, we have, 
\begin{align}
\mathbb{P}[X \in S(\varepsilon)] \leq \frac{2}{\varepsilon} \|W - W' \|_{\square}. \nonumber 
\end{align} 


\item[(ii)] For any $W \in \mathcal{W}$, 
if $\nu_p \stackrel{w}{\to} \nu$, then 
\begin{align}
\sup_{x \in [0,1]} \Big| m(\nu_p, W, x) -  m(\nu, W, x)  \Big| \to 0 \nonumber 
\end{align} 
as $p \to \infty$. 
\end{itemize} 
\end{lemma}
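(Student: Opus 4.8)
For part (i), the plan is to bound the Lebesgue measure of $S(\varepsilon)$ directly using the defining property of the cut norm. Write $D(x) := m(\mu,W,x) - m(\mu,W',x) = \mathbb{E}_{(X,Z,U)\sim\mu}[(W-W')(x,X')\,U']$. I would split $[0,1]$ into $S^+(\varepsilon) = \{x : D(x) > \varepsilon\}$ and $S^-(\varepsilon) = \{x : D(x) < -\varepsilon\}$. For the set $S^+(\varepsilon)$, consider the test functions $f = \mathbf{1}_{S^+(\varepsilon)}$ and $g(y) = \mathbb{E}[U' \mid X' = y]$ (the conditional mean of $U$ given the first coordinate under $\mu$), which satisfies $\|g\|_\infty \le 1$ since $|U|\le 1$ a.s. Then
\begin{align*}
\varepsilon\,|S^+(\varepsilon)| \le \int_{S^+(\varepsilon)} D(x)\,\mathrm{d}x = \int_{[0,1]^2} (W-W')(x,y) f(x) g(y)\,\mathrm{d}x\,\mathrm{d}y \le \|W - W'\|_{\infty \mapsto 1} \le 4\|W-W'\|_\square.
\end{align*}
Hmm, this gives a factor $4$; to land the cleaner constant $2$ one uses instead that the signed cut norm controls $\sup_{S,T}|\int_{S\times T}|$, and $g$ can be decomposed into its positive and negative parts, each bounded by $1$, so $\int_{S^+\times[0,1]}(W-W')(x,y)g(y)\,\mathrm{d}y\,\mathrm{d}x = \int_{S^+\times T^+} - \int_{S^+\times T^-} \le 2\|W-W'\|_\square$ where $T^\pm$ are the sign sets of $g$. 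The same bound applies to $S^-(\varepsilon)$, and adding the two gives $\mathbb{P}[X\in S(\varepsilon)] = |S(\varepsilon)| \le \frac{2}{\varepsilon}\|W-W'\|_\square$ as claimed. I would double-check whether the intended constant really is $2$ or whether $4$ suffices for the downstream use; the argument is robust to that.

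For part (ii), the plan is a standard $L^1$-approximation-plus-weak-convergence argument, uniform in $x$. First I would handle the case of a continuous (hence bounded and uniformly continuous) kernel $W^{(\ell)}$: for such a kernel the map $(x, \nu) \mapsto m(\nu, W^{(\ell)}, x)$ is jointly continuous, and since $x$ ranges over the compact set $[0,1]$, weak convergence $\nu_p \to \nu$ upgrades to $\sup_{x}|m(\nu_p,W^{(\ell)},x) - m(\nu,W^{(\ell)},x)| \to 0$ by a routine compactness/equicontinuity argument (the family $\{m(\cdot,W^{(\ell)},x)\}_{x}$ is uniformly equicontinuous in the measure argument because $W^{(\ell)}$ is uniformly continuous, and pointwise convergence plus equicontinuity on a compact set gives uniform convergence). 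Then I would use part (i) — or rather its $L^\infty$-analogue — to pass from $W^{(\ell)}$ to a general graphon $W$: choosing $W^{(\ell)} \to W$ in $L^1$, we have for every probability measure $\rho$ with $|U|\le 1$ that $\sup_x |m(\rho,W,x) - m(\rho,W^{(\ell)},x)| \le \sup_x \int |W - W^{(\ell)}|(x,y)\,\mathrm{d}y$, which is \emph{not} uniformly small in $x$ in general. This is the subtlety: $L^1$ convergence of $W^{(\ell)}$ only controls $\int_{[0,1]}\sup_x|\cdots|$ after integrating, not the sup.

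Consequently the cleanest route for part (ii) is to avoid demanding uniform-in-$x$ control of the approximation error and instead observe that the conclusion $\sup_x|m(\nu_p,W,x)-m(\nu,W,x)|\to 0$ should be read together with the way the lemma is actually invoked (via part (i), on all but a small-measure set of $x$). The main obstacle is precisely reconciling the $\sup_x$ in the statement with the fact that $L^1$-density of continuous functions does not give $L^\infty$ control; I would resolve it by either (a) strengthening the hypothesis to weak convergence of $\nu_p$ \emph{together with} a mild regularity assumption on $W$ (e.g. $W$ such that $x\mapsto W(x,\cdot)$ is $L^1$-continuous, which holds for all the graphons appearing in the applications), under which the approximation error \emph{is} uniformly small; or (b) replacing $\sup_{x\in[0,1]}$ in the statement by "for all $x$ outside a set of arbitrarily small measure," which is all that is used in the proof of Theorem \ref{thm:pure_state}. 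Modulo this reconciliation, the proof is: approximate $W$ by continuous $W^{(\ell)}$, apply joint continuity and compactness of $[0,1]$ for the continuous kernel, and control the remainder via part (i). The bookkeeping is routine; the only real decision point is which of the two fixes above to adopt.
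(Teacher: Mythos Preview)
Your approach to part (i) is essentially the paper's: split $S(\varepsilon)$ into $S^+(\varepsilon)$ and $S^-(\varepsilon)$, apply a Markov-type inequality, and bound the resulting integral via the cut norm using $f=\mathbf{1}_{S^\pm}$ and $g(y)=\mathbb{E}_\mu[U'\mid X'=y]$. Your care about the constant is warranted; the paper writes $\frac{1}{\varepsilon}\lVert W-W'\rVert_\square$ per half without spelling out how a general $g$ with $\lVert g\rVert_\infty\le 1$ reduces to indicators, so the exact constant is a bit loose in both treatments and, as you note, irrelevant downstream.

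For part (ii) the two arguments diverge in technique but land in the same place. The paper does not use your equicontinuity/compactness route; instead it invokes the Skorokhod representation theorem to couple $(X_p,Z_p,U_p)\sim\nu_p$ and $(X,Z,U)\sim\nu$ so that they converge almost surely, and then for a continuous approximant $W'$ bounds
\[
|m(\nu_p,W',x)-m(\nu,W',x)|\le \lVert W'\rVert_\infty\,\mathbb{E}|U_p-U|+\sup_{|y-z|\le |X_p-X|}|W'(x,y)-W'(x,z)|,
\]
which is $o(1)$ uniformly in $x$ by dominated convergence and uniform continuity of $W'$. This Skorokhod step is the main technical difference from your proposal; it makes the uniform-in-$x$ convergence for the continuous kernel immediate, whereas your equicontinuity argument requires a separate (though standard) lemma about uniformly bounded equicontinuous test families under weak convergence.

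You have correctly put your finger on the real issue: the passage from the continuous $W'$ back to $W$. The paper's own proof only records the bound $\mathbb{E}_X\big[|m(\mu,W,X)-m(\mu,W',X)|\big]\le \lVert W-W'\rVert_1$, which is an $L^1$-in-$x$ estimate, not an $L^\infty$ one, and then asserts the $\sup_x$ conclusion by ``combining the two displays.'' So the gap you flag is present in the paper as well. Your fix (b) --- downgrading the conclusion to convergence outside a set of arbitrarily small measure (equivalently, convergence in $L^1_x$) --- is exactly what the combination of the paper's two displays actually yields, and it is precisely what is used in the proof of Theorem~\ref{thm:pure_state} part (iii), where $m(\tilde L_p^{(\hat{\mathbf u}_p)},W,X)$ appears inside a bounded integrand under $\mathbb{E}_{\tilde L_p}$. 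In short: your diagnosis is accurate, your proposed resolution (b) matches what the paper's argument genuinely establishes, and the Skorokhod coupling is the one idea the paper uses that your proposal does not.
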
 

\begin{proof}[Proof of Theorem~\ref{thm:pure_state}] 
We break the proof into several steps. Note that using Lemma \ref{lemma:sup_equiv}, both variational problems attain their suprema.  

\begin{itemize}

\item[(i)]
We establish uniqueness of the maximizer assuming \eqref{eq:separate}. 
%
If possible, let $F_1$ and $F_2$ be two distinct optimizers of $\mathcal{G}_{W,g,\psi}$ in $\mathscr{F}$. Therefore, using Theorem \ref{thm:var_conv_new}, 
\begin{align}
\frac{1}{p}\sup_{{\bf u}\in [-1,1]^p}M_p({\bf u}) \stackrel{P|\mathbf{X}}{\longrightarrow} \sup_{F\in \mathscr{F}} \mathcal{G}_{W,g,\psi}(F)  = \mathcal{G}_{W,g,\psi}(F_1) = \mathcal{G}_{W,g,\psi}(F_2). \nonumber  
\end{align} 

Let $\xi_1, \cdots, \xi_p$ be iid $\mathcal{N}(0,1)$ random variables arising in the functional $\widetilde{M}_p(\cdot)$. Further, let $V_i \sim U([(i-1)/p, i/p])$ be independent of the $\xi_i$ variables. Fixing $\delta>0$, following the recipe in \eqref{eq:u_defn}, we define two sequences $\mathbf{u}^{(1)}_{p,\delta,K}$, $\mathbf{u}^{(2)}_{p,\delta,K}$ in $[-1,1]^p$ corresponding to $F_1, F_2$ respectively.  Using Lemma \ref{lemma:lower_bound_argument}, for $i=1,2$, for $\delta, K>0$ we have

\begin{align}
&\mathbb{P}\Big[  \frac{1}{p}M_p(\mathbf{u}^{(i)}_{p,\delta,K}) >  \mathcal{G}_{W,g,\psi}(F_i) - \frac{\delta'}{4}   \Big| \mathbf{X}\Big]  \geq  1 -  \mathcal{E}(\delta,K),  \nonumber 
\end{align} 
where $\limsup_{K \to \infty} \limsup_{\delta \to 0} \mathcal{E}(\delta,K) =0$. 

Moreover, using Theorem \ref{thm:var_conv_new} and the assumption that $F_1$ and $F_2$ are optimizers of the limiting variational problem, we have, for $i=1,2$, 
\begin{align}
\mathbb{P}\Big[  \frac{1}{p}M_p(\mathbf{u}^{*}_p) <  \mathcal{G}_{W,g,\psi}(F_i) + \frac{\delta'}{4} \Big| \mathbf{X} \Big] = 1- o(1). \nonumber 
\end{align} 

On the event $\{ \| \mathbf{u}_{p,\delta,K}^{(i)} - \mathbf{u}_p^* \|_2^2 > p \varepsilon , i=1,2\} \cap \{  \frac{1}{p}M_p(\mathbf{u}^{*}_p) <  \mathcal{G}_{W,g,\psi}(F_i) + \frac{\delta'}{4}\} $, for $i = 1,2$ we have,
\begin{align}
\frac{1}{p} M_p(\mathbf{u}_{p,\delta,K}^{(i)} ) \leq   \frac{1}{p} M_p(\mathbf{u}^{*}_p) - \delta' \leq \mathcal{G}_{W,g,\psi}(F_i) + \frac{\delta'}{4} - \delta' < \mathcal{G}_{W,g,\psi}(F_i) - \frac{\delta'}{4}. \nonumber 
\end{align} 

Using \eqref{eq:separate}, this implies, for all $p$ sufficiently large, $\mathbb{P}[ \| \mathbf{u}_{p,\delta,K}^{(i)} - \mathbf{u}_p^* \|_2^2 > p \varepsilon, i=1,2 | \mathbf{X} ] \leq 3 \mathcal{E}(\delta,K) $. By triangle inequality,  $\mathbb{P}[\| \mathbf{u}_{p,\delta,K}^{(1)} - \mathbf{u}_{p,\delta,K}^{(2)} \|_2^2 > 2 p \varepsilon | \mathbf{X}] \leq 3 \mathcal{E}(\delta,K) $. As $\mathbf{u}_{p,\delta,K}^{(i)} \in [-1, 1]^{p}$, we have, 
\begin{align}
\frac{1}{p}\mathbb{E}[ \| \mathbf{u}_{p,\delta,K}^{(1)} - \mathbf{u}_{p,\delta,K}^{(2)}\|_2^2 ] \leq 2 \varepsilon + 12 \mathcal{E}(\delta,K) . \nonumber 
\end{align} 
Thus, with $X \sim U([0,1])$, $Z \sim \mathcal{N}(0,1)$ independent, we have, 
\begin{align}
&\mathbb{E}[(F_1(X,Z) - F_2(X,Z))^2] \nonumber \\
=& \frac{1}{p} \sum_{i=1}^{p} \mathbb{E}[(F_1(V_i , \xi_i) - F_2 (V_i, \xi_i))^2] \nonumber \\
\leq&  \frac{1}{p}\mathbb{E}[ \| \mathbf{u}_{p,\delta,K}^{(1)} - \mathbf{u}_{p,\delta,K}^{(2)}\|_2^2 ]  + \sum_{a=1,2} \mathbb{P}[F_a(X,Z) \leq -1 + \delta] \nonumber \\
\leq& 2  \varepsilon + 12 \mathcal{E}(\delta,K) + \sum_{a=1,2} \mathbb{P}[F_a(X,Z) \leq -1 + \delta] . \nonumber 
\end{align} 

 Setting $\delta \to 0$ followed by $K \to \infty$, we obtain that $\mathbb{E}[(F_1(X,Z) - F_2(X,Z))^2] \leq 2 \varepsilon$.  Here we use that $F_1,F_2$ are optimizers; thus without loss of generality, $\mathbb{E}[G(F_a(X, Z), \psi(X)] < \infty$ for $a=1,2$, which along with Remark \ref{rem:case_analysis} implies that $ \mathbb{P}[F_a(X,Z) = -1] = 0$.

Finally, note that $\varepsilon>0$ is arbitrary, and thus $F_1=F_2$ a.s.. This establishes the desired uniqueness. 

%

\item[(ii)] As $L_p^{(\mathbf{u}^*_p)}$ and $\tilde{L}_p^{(\mathbf{u}^*_p)}$ are close in weak topology, it suffices to work with $\tilde{L}_p^{(\mathbf{u}^*_p)}$. With $\tilde{\mathbf{u}}_{p,\delta,K}$ as in \eqref{eq:u_defn}, for any $\varepsilon>0$, Lemma \ref{lemma:lower_bound_argument} and \eqref{eq:separate} imply 
\begin{align}
\limsup_{K \to \infty} \limsup_{\delta \to 0} \limsup_{p \to \infty} \mathbb{P}[\| \tilde{\mathbf{u}}_{p,\delta,K} - \mathbf{u}^*_p \|_2^2 > p \varepsilon | \mathbf{X}] =0. \nonumber 
\end{align} 
Recalling the 2-Wasserstein distance $d(\cdot, \cdot)$ from Lemma \ref{lemma:tightness}, we have, 
\begin{align}
 d( \tilde{L}_p^{(\mathbf{u}_p^*)} , \tilde{L}_p^{(\mathbf{u}_{p,\delta,K})} )  \leq  \frac{1}{p} \| \tilde{\mathbf{u}}_{p,\delta,K} - \mathbf{u}^*_p \|_2^2, \nonumber 
\end{align} 
and so it suffices to look at $\tilde{L}_p^{(\mathbf{u}_{p,\delta,K})}$. Let $\eta : \mathbb{R}^3 \to \mathbb{R}$ be bounded continuous. Then 
\begin{align}
\mathbb{E}_{\tilde{L}_p^{(\mathbf{u}_{p,\delta,K})} } [\eta(X,Z,U) ] = \frac{1}{p} \sum_{i=1}^{p} \eta(V_i , \xi_i, \tilde{u}_{p,\delta,K}(i) ) = \frac{1}{p} \sum_{i=1}^{p} \mathbb{E}[\eta(V_i , \xi_i, \tilde{u}_{p,\delta,K}(i) )] + o_P(1) , \nonumber
\end{align} 
where the last equality follows from the law of large numbers and the boundedness of $\eta$. Here, the $o_P(1)$ term converges to zero in probability under the joint distribution of $\{(V_i,\xi_i): 1 \leq i \leq p\}$. Finally, 
\begin{align}
\Big| \frac{1}{p} \sum_{i=1}^{p} \mathbb{E}[\eta(V_i , \xi_i, \tilde{u}_{p,\delta,K}(i) )]-  \mathbb{E}[\eta(X , Z, F(X, Z) \mathbf{1}(  F(X,Z) > -1 + \delta  ) )] \Big| \leq \mathbb{P}[F(X,Z) \leq -1+\delta]. \nonumber 
\end{align} 
This gives, for any $\varepsilon>0$, 
\begin{align}
\limsup_{K \to \infty} \limsup_{\delta \to 0} \limsup_{p \to \infty} \mathbb{P}\Big[  \Big| \mathbb{E}_{\tilde{L}_p^{(\mathbf{u}_{p,\delta,K})} } [\eta(X,Z,U) ] - \mathbb{E}[\eta(X , Z, F(X, Z) )] \Big| > \varepsilon | \mathbf{X}\Big] = 0, \nonumber 
\end{align} 
where we again use that $\mathbb{P}[F(X,Z) = -1] =0$. This completes the proof of Part (ii). 

%
%
%
%

\item[(iii)] For any $p \geq 1$, the function $\widetilde{M}_p(\cdot)$ is upper semicontinuous on $[-1,1]^p$, and thus attains its maximum. Let $\{ \hat{\mathbf{u}}_p : p \geq 1\}$ denote any sequence of global maximizers of $\widetilde{M}_p(\cdot)$. Lemma \ref{lem:off_diagonal} and the separation condition \eqref{eq:separate} together imply that $\frac{1}{p} \| \mathbf{u}_p^* - \hat{\mathbf{u}}_p\|_2^2 \stackrel{P| \mathbf{X}}{\to} 0$. In turn, this implies
\begin{align}
d(\tilde{L}_p^{(\mathbf{u}_p^*)}, \tilde{L}_p^{(\hat{\mathbf{u}}_p)}) \stackrel{P|\mathbf{X}}{\to} 0, \label{eq:wasserstein}
\end{align}
  where $d(\cdot, \cdot)$ denotes the $2$-Wasserstein distance. Thus $\tilde{L}_p^{(\hat{\mathbf{u}}_p)}$ converges weakly in probability to $\nu^*:=(X,Z, F^*(X,Z))$. 
  
 Next, differentiating $\widetilde{M}_p(\cdot)$ at $\mathbf{u} \in (-1,1)^p$, we obtain 
\begin{align}
 \nabla \widetilde{M}_p(\mathbf{u}) = \Big[ \frac{1}{\sigma^2} \Big( - A_p\mathbf{u}  + A_p \bbeta_0 + D_p \bbeta_0 + \textrm{diag}(\sqrt{D_p(i,i)}) \mathbf{\xi} \Big) - h(\mathbf{u}, \mathbf{d}) \Big], \nonumber 
\end{align} 
where $\xi = (\xi_1, \cdots, \xi_p)^{\mathrm{T}}$, and $h(\cdot, \cdot)$ is the vector obtained upon applying $h$ coordinate-wise to the two vectors. 
Note that if $u_i \to +1 $, then $h(u_i,d_i) \to  \infty$, and thus $\frac{\partial}{\partial u_i} \widetilde{M}_p(\mathbf{u}, \mathbf{d}) \to - \infty$. Consequently, $\hat{u}_i < 1$. A similar argument implies $\hat{u}_i > -1$, and thus  $\hat{\mathbf{u}}_p \in (-1,1)^p$. This immediately implies that $\hat{\mathbf{u}}_p$ is a critical point of $\widetilde{M}_p$, and satisfies the fixed point equation 
\begin{align}
\hat{\mathbf{u}}_p = \dot{c}\Big( \frac{1}{\sigma^2} \Big( - A_p \hat{\mathbf{u}}_p + A_p \bbeta_0 + D_p \bbeta_0 +  \textrm{diag}(\sqrt{D_p(i,i)}) \mathbf{\xi}   \Big), \mathbf{d}   \Big). \label{eq:fixed_point} 
\end{align} 
Let $f_1: [0,1] \to \mathbb{R}$ and $f_2: \mathbb{R} \to \mathbb{R}$ be bounded continuous functions. Recalling the function $m$ from Lemma \ref{lem:map_cont}  and setting $g_p = W_{p A_p} \cdot w_{\bbeta_0} + w_{\bbeta_0} w_{\mathbf{D}}$, we have, 
\begin{align}
&\mathbb{E}_{\tilde{L}_p^{(\hat{\mathbf{u}}_p)}}[f_1(X) f_2(Z) U] \nonumber\\
=& \mathbb{E}_{\tilde{L}_p^{(\hat{\mathbf{u}}_p)}}\Big[ f_1(X) f_2(Z)  \dot{c} \Big( \frac{1}{\sigma^2} \Big( - m(\tilde{L}_p^{(\hat{\mathbf{u}}_p)}, W_{p A_p},X ) + g_p(X) + \sqrt{w_{\mathbf{D}}(X)}Z   \Big) , w_{\mathbf{d}}(X)   \Big)    \Big] \nonumber \\
=&\mathbb{E}_{\tilde{L}_p^{(\hat{\mathbf{u}}_p)}}\Big[f_1(X) f_2(Z)  \dot{c} \Big( \frac{1}{\sigma^2} \Big( - m(\tilde{L}_p^{(\hat{\mathbf{u}}_p)}, W_{p A_p},X ) + g(X) + \sqrt{\psi(X)}Z   \Big) , w_{\mathbf{d}}(X)    \Big)  \Big] + \mathcal{E}_p \label{eq:approx1}   
\end{align} 
where we use $\ddot{c}(\theta,d) \leq 1$, and observe that 
\begin{align}
|\mathcal{E}_p| \lesssim  \mathbb{E}_{\tilde{L}_p^{(\hat{\mathbf{u}}_p)}}[| (\sqrt{w_{\mathbf{D}}(X)} - \sqrt{\psi(X)} )Z| ] + \|g_p - g\|_1 \lesssim \| w_{\mathbf{D}} - \psi\|_1 + \|g_p - g\|_1 = o(1). \label{eq:approx2}  
\end{align} 

Define the good event
\begin{align}
\mathscr{E}(\varepsilon) = \{|m(\tilde{L}_p^{(\hat{\mathbf{u}}_p)}, W_{pA_p}, X) - m(\tilde{L}_p^{(\hat{\mathbf{u}}_p)}, W,X)| \leq \varepsilon\} \cap \{ | w_{\mathbf{D}}(X) - \psi(X) | \leq \varepsilon\}. \nonumber 
\end{align} 
Then, upon observing that $\sup_{\gamma_1, \gamma_2} |\frac{\partial^2}{\partial \gamma_1 \partial \gamma_2}  c(\gamma_1, \gamma_2)| \lesssim 1 $ and $ \sup_{\gamma_1, \gamma_2} |\frac{\partial^2}{\partial \gamma_1^2} c(\gamma_1, \gamma_2) | \lesssim 1$, we have,  on the event $\mathscr{E}(\varepsilon)$,
\begin{align}
\Big|  \dot{c} \Big( \frac{1}{\sigma^2} \Big( - m(\tilde{L}_p^{(\hat{\mathbf{u}}_p)}, W_{p A_p},X )  + g(X) + \mathbb{E}[\sqrt{\psi(X)}Z]   \Big) , w_{\mathbf{d}}(X)   \Big)  - \nonumber \\
-  \dot{c} \Big( \frac{1}{\sigma^2} \Big( - m(\tilde{L}_p^{(\hat{\mathbf{u}}_p)}, W,X )  + g(X) + \mathbb{E}[\sqrt{\psi(X)}Z]   \Big) , \frac{\psi(X)} {\sigma^2}  \Big)  \Big| \lesssim \varepsilon. \nonumber
\end{align}

Thus we have, 
\begin{align}\label{eq:approx3}
\begin{split} 
&\Big| \mathbb{E}_{\tilde{L}_p^{(\hat{\mathbf{u}}_p)}}\Big[ f_1(X) f_2(Z)  \dot{c} \Big( \frac{1}{\sigma^2} \Big( - m(\tilde{L}_p^{(\hat{\mathbf{u}}_p)}, W_{p A_p},X ) + g(X) + \sqrt{\psi(X)}Z   \Big) , w_{\mathbf{d}}(X)  \Big)   \Big] \\
&-  \mathbb{E}_{\tilde{L}_p^{(\hat{\mathbf{u}}_p)}}\Big[ f_1(X) f_2(Z)  \dot{c} \Big( \frac{1}{\sigma^2} \Big( - m(\tilde{L}_p^{(\hat{\mathbf{u}}_p)}, W,X ) + g(X) + \sqrt{\psi(X)}Z   \Big) , \frac{\psi(X)}{\sigma^2}  \Big)   \Big]  \Big|   \\
&\lesssim \varepsilon + \mathbb{P}[\mathscr{E}(\varepsilon)^c]   \\
&\lesssim  \varepsilon + \frac{2}{\varepsilon} \|W - W_{pA_p}\|_{\square} + \mathbb{P}\Big[| w_{\mathbf{d}}(X) - \frac{ \psi(X)}{\sigma^2}| > \varepsilon \Big] = O(\varepsilon) + o(1), 
\end{split}  
\end{align}
where the last inequality uses Lemma \ref{lem:map_cont} part (i).

\noindent

Finally, Lemma \ref{lem:map_cont} part (ii) implies that 
\begin{align}
 &\mathbb{E}_{\tilde{L}_p^{(\hat{\mathbf{u}}_p)}}\Big[ f_1(X) f_2(Z)  \dot{c} \Big( \frac{1}{\sigma^2} \Big( -m(\tilde{L}_p^{(\hat{\mathbf{u}}_p)}, W,X ) + g(X) + \sqrt{\psi(X)}Z   \Big) , \frac{\psi(X)}{\sigma^2}    \Big)  \Big] \nonumber\\
 =& \mathbb{E}_{\tilde{L}_p^{(\hat{\mathbf{u}}_p)}}\Big[ f_1(X) f_2(Z)   \dot{c} \Big( \frac{1}{\sigma^2} \Big( - m(\nu^*, W,X ) + g(X) + \sqrt{\psi(X)}Z   \Big) , \frac{\psi(X)}{\sigma^2}    \Big)  \Big] +o(1). \label{eq:approx4}  
\end{align} 



Combining \eqref{eq:approx1}, \eqref{eq:approx2}, \eqref{eq:approx3}, \eqref{eq:approx4}, we have, 
\begin{align}
&\mathbb{E}_{\tilde{L}_p^{(\hat{\mathbf{u}}_p)}}[f_1(X) f_2(Z) U] \nonumber\\ 
=&  \mathbb{E}_{\tilde{L}_p^{(\hat{\mathbf{u}}_p)}}\Big[ f_1(X) f_2(Z)   \dot{c} \Big( \frac{1}{\sigma^2} \Big( - m(\nu^*, W,X ) + g(X) + \sqrt{\psi(X)}Z   \Big) , \frac{\psi(X)}{\sigma^2}     \Big)  \Big]  + o(1) + O(\varepsilon). \nonumber 
\end{align} 
Sending $p \to \infty$, and then $\varepsilon \to 0$, we obtain 
\begin{align}
&\mathbb{E}_{\nu^*}[f_1(X) f_2(Z) U] \nonumber \\
=&  \mathbb{E}_{\nu^*}\Big[ f_1(X) f_2(Z)   \dot{c} \Big( \frac{1}{\sigma^2} \Big( - m(\nu^*, W,X ) + g(X) + \sqrt{\psi(X)}Z   \Big) , \frac{\psi(X)}{\sigma^2}   \Big)  \Big]. \nonumber 
\end{align} 
The desired conclusion follows upon recalling that $(X,Z,F^*(X,Z)) \sim \nu^*$. 
\end{itemize}
\end{proof} 

We next turn to the proof of Lemma~\ref{lemma:pure_state}. To this end, we require the following auxiliary lemma. The proof is deferred to the Appendix. 

\begin{lemma}\label{lem:concave}

Suppose $r_p : [-1,1]^p \to \mathbb{R} \cup \{-\infty\}$ is upper semi-continuous on $[-1,1]^p$, finite and differentiable on $(-1,1)^p$,  and 
$$\sup_{{\bf u}\in (-1,1)^p}\lambda_{\min}(H_p({\bf x}))\le -\eta,$$ where $H_p(\cdot)$ is the Hessian of $r_p(\cdot)$. Then there exists a unique global maximizer ${\bf x}_0\in [-1,1]^p$, and further for any ${\bf x}\in [-1,1]^p$ we have 
\begin{align*}
r_p({\bf x}_0)-r_p({\bf x})\ge \frac{\eta}{2}\lVert {\bf x}-{\bf x}_0\rVert_2^2.
\end{align*}
\end{lemma}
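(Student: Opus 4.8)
The plan is to treat this as a standard exercise in the convex analysis of strongly concave functions, with the only subtlety being that $r_p$ is allowed to take the value $-\infty$ on the boundary of the cube and is only assumed differentiable in the open cube. First I would establish existence of a global maximizer: since $r_p$ is upper semi-continuous on the compact set $[-1,1]^p$, it attains its supremum; I should check the supremum is finite and not $-\infty$, which follows because $r_p$ is finite at any interior point. Call a maximizer $\mathbf{x}_0$. The eigenvalue hypothesis $\lambda_{\min}(H_p(\mathbf{u})) \le -\eta < 0$ on $(-1,1)^p$ says $r_p$ is $\eta$-strongly concave on the open cube, hence strictly concave, which already forces uniqueness of the maximizer once we know it exists; I would record this.

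The core estimate $r_p(\mathbf{x}_0) - r_p(\mathbf{x}) \ge \frac{\eta}{2}\|\mathbf{x}-\mathbf{x}_0\|_2^2$ is immediate for $\mathbf{x}$ in the open cube: along the segment $\gamma(t) = \mathbf{x}_0 + t(\mathbf{x}-\mathbf{x}_0)$, $t \in [0,1]$, the function $\phi(t) := r_p(\gamma(t))$ is twice differentiable with $\phi''(t) = (\mathbf{x}-\mathbf{x}_0)^{\mathrm{T}} H_p(\gamma(t))(\mathbf{x}-\mathbf{x}_0) \le -\eta\|\mathbf{x}-\mathbf{x}_0\|_2^2$, and $\phi'(0) \le 0$ because $\mathbf{x}_0$ is a maximizer of $r_p$ over the cube (if $\mathbf{x}_0$ is interior, $\phi'(0)=0$; if $\mathbf{x}_0$ is on the boundary, the segment points into the cube so the one-sided derivative is $\le 0$). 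Taylor expansion with integral remainder then gives $\phi(1) \le \phi(0) + \frac{1}{2}\sup_t \phi''(t) \le \phi(0) - \frac{\eta}{2}\|\mathbf{x}-\mathbf{x}_0\|_2^2$, which is exactly the claim.

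The one genuine obstacle is extending this to boundary points $\mathbf{x} \in \partial([-1,1]^p)$, where $r_p(\mathbf{x})$ could be $-\infty$ (in which case the inequality is trivial) or finite. If $r_p(\mathbf{x})$ is finite, I would approximate: for $t \in (0,1)$ the point $\mathbf{x}_t := (1-t)\mathbf{x} + t\mathbf{x}_0'$ lies in the open cube for any fixed interior reference point, but more directly, since $\mathbf{x}_0$ is a maximizer and strict concavity holds, I can run the segment argument on $[\varepsilon, 1]$ (so that $\gamma(t)$ stays in the open cube for $t<1$) to get $r_p(\gamma(1-\varepsilon)) \le r_p(\mathbf{x}_0) - \frac{\eta}{2}(1-\varepsilon)^2\|\mathbf{x}-\mathbf{x}_0\|_2^2$, and then let $\varepsilon \to 0$, using upper semi-continuity of $r_p$ at $\mathbf{x}$ in the form $\limsup_{\varepsilon \to 0} r_p(\gamma(1-\varepsilon)) \le r_p(\mathbf{x})$ — wait, that is the wrong direction. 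Instead I would use \emph{lower} semicontinuity along the segment, which is not assumed; so the cleaner route is to note that $\phi(t) = r_p(\gamma(t))$ is concave on $[0,1)$ and bounded above, hence has a limit as $t \uparrow 1$, and that limit is $\ge \limsup$, combined with the fact that a concave function on $[0,1)$ extended by its limit at $1$ satisfies $\phi(1^-) \le \liminf$... The safest formulation: $\phi$ is concave on $[0,1)$, so $\phi(1^-):=\lim_{t\uparrow 1}\phi(t)$ exists in $[-\infty,\infty)$; concavity plus the second-derivative bound on $[0,1)$ gives $\phi(t) \le \phi(0) - \frac{\eta}{2}t^2\|\mathbf{x}-\mathbf{x}_0\|_2^2$ for all $t<1$; letting $t \uparrow 1$ gives $\phi(1^-) \le \phi(0) - \frac{\eta}{2}\|\mathbf{x}-\mathbf{x}_0\|_2^2$; and finally upper semi-continuity of $r_p$ gives $r_p(\mathbf{x}) = r_p(\gamma(1)) \ge \phi(1^-)$ is false in general, so one instead argues that $\mathbf{x}$ is \emph{not} the relevant issue because if $r_p(\mathbf{x}) > \phi(1^-)$ then replacing $\mathbf{x}$ by points near it still... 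I would resolve this by simply observing that if $\phi(1^-) < r_p(\mathbf{x})$, then $r_p$ restricted to the closed segment is not u.s.c. at $1$ unless $r_p(\mathbf{x}) \le \phi(1^-)$, i.e.\ u.s.c.\ gives $r_p(\mathbf{x}) = \phi(1) \le \limsup_{t \uparrow 1}\phi(t) = \phi(1^-)$ since the lim exists; hence $r_p(\mathbf{x}) \le \phi(1^-) \le \phi(0) - \frac{\eta}{2}\|\mathbf{x}-\mathbf{x}_0\|_2^2 = r_p(\mathbf{x}_0) - \frac{\eta}{2}\|\mathbf{x}-\mathbf{x}_0\|_2^2$, as desired. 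This boundary bookkeeping, rather than any deep idea, is where the care is needed.
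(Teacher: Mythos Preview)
Your proposal is correct and follows essentially the same route as the paper: existence from upper semi-continuity on the compact cube, the one-dimensional reduction $\phi(t)=r_p((1-t)\mathbf{x}_0+t\mathbf{x})$ with $\phi''(t)\le -\eta\|\mathbf{x}-\mathbf{x}_0\|_2^2$, the first-order condition $\phi'(0^+)\le 0$ at the maximizer, and extension to boundary $\mathbf{x}$ via upper semi-continuity. The only cosmetic difference is that the paper handles the possibility that $\mathbf{x}_0$ lies on the boundary by expanding $\phi$ from a point $\varepsilon>0$ and sending $\varepsilon\to 0$, whereas you invoke the one-sided derivative directly; and for boundary $\mathbf{x}$ the paper simply says ``extends by upper semi-continuity'' while you spell out the $\phi(1^-)$ argument.
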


\begin{proof}[Proof of Lemma~\ref{lemma:pure_state}]

\textit{Proof of (i)} 
The equation \eqref{eq:ps} implies that the event 
$$\sup_{{\bf u}\in [-1,1]^p:\lVert {\bf u}-{\bf u}_p^*\rVert_2^2>p \varepsilon }\frac{1}{p}\{M_p({\bf u})-M_p({\bf u}_p^*)\}<-\varepsilon \lambda$$
occurs with probability $1- o(1)$.

\textit{Proof of (ii)(a)} 
Differentiating $M_p(\cdot)$, twice, the Hessian is given by 
\begin{align}\label{eq:hessian}
H_p = - \frac{1}{\sigma^2} A_p - \Delta_p,\quad 
\Delta_p(i,i) = \frac{1}{\ddot{c}(h(u_i,d_i),d_i)}. 
\end{align}
Note that $\ddot{c}(h(u,d), d)$ is the variance of a random variable supported on $[-1,1]$, and thus $\ddot{c}(h(u,d),d) \leq 1$. Under the assumption of (i), there exists $\delta>0$ such that for all $p$ large enough and all ${\bf x} \in (-1,1)^p$, 
\begin{align}\label{eq:lower_eigen}
{\bf x}^{\mathrm{T}} H_p(u) {\bf x} = - \frac{1}{\sigma^2}{\bf x}^{\mathrm{T}} A_p {\bf x} - {\bf x}^{\mathrm{}} \Delta_p {\bf x} \leq - \delta \|{\bf x}\|_2^2.  
\end{align}
It then follows from Lemma \ref{lem:concave} that there exists ${\bf u}^*\in [-1,1]^p$ which is a unique optimizer of $M_p(\cdot)$, and further, for any ${\bf u}\in [-1,1]^p$ we have
$$M_p({\bf u}^*)-M_p({\bf u})\ge \frac{\delta}{2} \lVert {\bf u}-{\bf u}^*\rVert_2^2.$$
This verifies \eqref{eq:ps}. 

\textit{Proof of (ii)(b)} 
To begin, note that the prior is absolutely continuous with respect to the Lebesgue measure on $[-1,1]$ and thus $D_{KL}(\pi_{\infty}\| \pi) = D_{KL}(\pi_{-\infty} \| \pi) = \infty$. Thus to maximize $M_p(\cdot)$, it suffices to restrict to $(-1,1)^p$. As in \eqref{eq:lower_eigen}, it suffices to bound the lower eigenvalue of the Hessian, uniformly in $(-1,1)^p$, for which using \eqref{eq:hessian} and the fact that $\lambda_{\min}(\mathbf{X}^{\mathrm{T}}\mathbf{X})$ is bounded away from $0$ 
it suffices to show that $\Delta_p(i,i)\ge d_i$. 
By GHS inequality \cite{ellis1976ghs}, $\ddot{c}(h(u,d_i),d_i) \leq \ddot{c}(0,d_i)$, and thus the desired inequality follows, once we establish that 
\begin{align}\label{eq:ghs_conclude}
\sup_{d\geq 0} d \ddot{c}(0,d) \leq 1.
\end{align}
Consider now a scale family of parametric distributions $\{\mathcal{P}_{\theta}: \theta\geq 0\}$ with 
\begin{align}
\frac{\mathrm{d} \mathcal{P}_{\theta}}{\mathrm{d}x} \propto  \exp(- \theta V(x)) \exp(-dx^2/2). \nonumber 
\end{align}
Since $V(\cdot)$ is even, it follows that the first moment under $\mathcal{P}_\theta$ is $0$ for all $\theta$. Also, since $V(\cdot)$ is an increasing, this family has monotone likelihood ratio in $T(x)=|x|$, and thus the second moment under the law $\mathcal{P}_{\theta}$ is decreasing in $\theta$ for $\theta>0$, and so $$\ddot{c}(0,d)={\sf  Var}_{\theta=1}(Z)\le {\sf Var}_{\theta=0}(Z).$$
Proceeding to bound the RHS of the above display, for $d>0$ we have
\begin{align}
d {\sf Var}_{\theta=0}(Z)= \frac{\int_{-1}^{1} d z^2 \exp(-d z^2/2) \,\mathrm{d}z }{ \int_{-1}^{1} \exp(-dz^2/2) \,\mathrm{d}z}. \nonumber 
\end{align}
We substitute $\sqrt{d} z =t$, so that 
\begin{align}
d{\sf Var}_{\theta=0}(Z) = \frac{\int_{-\sqrt{d}}^{\sqrt{d}}t^2 \exp(-t^2/2) \,\mathrm{d}t}{ \int_{-\sqrt{d}}^{\sqrt{d}} \exp(-t^2/2) \,\mathrm{d}t}. \nonumber 
\end{align}
This is exactly the variance of a truncated standard Gaussian distribution, truncated to the interval $[-\sqrt{d}, \sqrt{d}]$. Indeed, the truncated variance  is $1- \frac{2\sqrt{d} \phi(\sqrt{d})}{2 \Phi(\sqrt{d}) -1} <1$ \cite{kotz1971univariate}, where $\phi(\cdot)$ and $\Phi(\cdot)$ represent the pdf and cdf of the standard Gaussian distribution. 
\end{proof}

\bibliographystyle{alpha}
\bibliography{meanfieldrefs}

\appendix

\section{Some technical lemmas} 
We collect some basic results about exponential families in the first subsection. We prove Lemma \ref{lem:concave} in the next subsection. 

\subsection{Results on exponential families} 

We prove Lemmas \ref{lemma:properties}, \ref{lemma:G_stability} and \ref{lemma:elementary} in this section. 

\begin{proof}[Proof of Lemma \ref{lemma:elementary}]

\begin{itemize}

\item[(i)] This follows by direct computation (see e.g. \cite{lehmann2006theory}). 

\item[(ii)] This follows by direct calculation. 

\item[(iii)] Recall from Definition \ref{def:exp} that 
\begin{align}
\frac{\mathrm{d} \psi_{\gamma} }{\mathrm{d}\pi}(z) = \exp\Big( \gamma_1 z - \frac{\gamma_2}{2} z^2 - c(\gamma)\Big). \nonumber  
\end{align}  
Without loss of generality, we consider the case $\gamma_{1,k} \to \infty$. The case $\gamma_{1,k} \to -\infty$ follows using the same argument, with obvious modifications. Observe that as $\gamma_{1,k} \to \infty$ and $\limsup_{k} |\gamma_{2,k}| < \infty $, for $k$ sufficiently large, the function $\gamma_{1,k} z  - \frac{\gamma_{2,k} }{2} z^2$ is increasing on $[-1,1]$. This implies, for any $\varepsilon>0$, 
\begin{align}
\int_{-1}^{1-\varepsilon} \exp\Big( \gamma_{1,k} z - \frac{\gamma_{2,k} }{2}z^2  \Big) \mathrm{d}\pi(z) \leq \exp\Big( \gamma_{1,k} (1- \varepsilon) - \frac{\gamma_{2,k}}{2}(1-\varepsilon)^2  \Big)  \pi([-1, 1-\varepsilon]). \nonumber 
\end{align} 
On the other hand, 
\begin{align}
\exp(c(\gamma_k)) \geq \int_{1-\frac{\varepsilon}{2}}^{1} \exp\Big( \gamma_{1,k} z - \frac{\gamma_{2,k} }{2}z^2  \Big)\mathrm{d}\pi(z) \geq  \exp\Big( \gamma_{1,k} (1- \frac{\varepsilon}{2}) - \frac{\gamma_{2,k}}{2} (1- \frac{\varepsilon}{2})^2  \Big) \pi([1 - \frac{\varepsilon}{2}, 1]). \nonumber 
\end{align} 
On taking ratios of the above two displays we get
\begin{align*}
\pi_{\gamma_k}([-1,1-\varepsilon])\le \exp\Big(-\frac{\varepsilon}{2} \gamma_{1,k}+\frac{1}{2}\gamma_{2,k}^2\Big) \frac{\pi([-1,1-\varepsilon])}{\pi([1-\frac{\varepsilon}{2}, 1])}
\end{align*}
Also note that the ratio of probabilities under $\pi$ in the RHS is positive, as $\pm 1$ is in the support of $\pi$.
The desired conclusion now follows upon letting $k\to\infty$, upon noting that $\gamma_{1,k}\to\infty$, and $\gamma_{2,k}$ stays bounded.


\item[(iv)] 
The proof follows directly from the lower semicontinuity of KL divergence under weak convergence \cite[Theorem 1]{posner1975random}.

\item[(v)] By definition, 
\begin{align}
H(x,y) = \int_{[-1,1]} z^r \exp\Big( h(x,y) z - \frac{y}{2} z^2 - c(h(x,y),y) \Big) \mathrm{d}\pi(z). \nonumber 
\end{align} 
For $x \in \{-1, 1\}$, $H(x, \cdot)$ is independent of $y$, and thus $\frac{\partial H(x,y)}{\partial y} =0$. We assume henceforth that $x \in (-1,1)$. Differentiating, we obtain, 
\begin{align}
\frac{\partial H(x,y)}{\partial y} =& \int_{[-1,1]} z^r  \Big[ z \frac{\partial h(x,y)}{\partial y} -  \frac{z^2}{2} - \Big( \dot{c}(h(x,y),y) \frac{\partial h(x,y)}{\partial y} + \frac{\partial c}{\partial \gamma_2} \Big|_{(h(x,y),y)}  \Big)  \Big]  \mathrm{d}\pi_{(h(x,y),y)} (z). \nonumber\\
 =& \int_{[-1,1]} z^r  \Big[ (z-x) \frac{\partial h(x,y)}{\partial y} -  \frac{z^2}{2} -  \frac{\partial c}{\partial \gamma_2} \Big|_{(h(x,y),y)}  \Big]  \mathrm{d}\pi_{(h(x,y),y)} (z), \nonumber 
\end{align} 
where the last equality uses $\dot{c}(h(x,y),y)) = x$. 
Therefore, 
\begin{align}
\Big| \frac{\partial H(x,y)}{\partial y} \Big| \leq \Big|\frac{\partial h(x,y)}{\partial y}  \Big|   \mathbb{E}_{\pi_{(h(x,y),y)}}[|Z-x|] + \frac{1}{2} + \frac{1}{2} \leq  \Big|\frac{\partial h(x,y)}{\partial y} \Big| + 1, \label{eq:H_stability_int} 
\end{align} 
where we use $\mathbb{E}_{\pi_{(h(x,y),y)}}[|Z|^r] \leq 1$ for all $ r \geq 1$. Now, differentiating $\dot{c}(h(x,y),y) = x$ in $y$, we have, 
\begin{align} 
\frac{\partial h(x,y)}{\partial y}   =& - \frac{\frac{\partial^2 c}{\partial \gamma_1 \partial \gamma_2} (h(x,y),y) }{\ddot{c}(h(x,y),y)}  \nonumber \\
=& \frac{1}{2} \frac{\mathrm{Cov}_{\pi_{(h(x,y),y)}}(Z,Z^2) }{\mathrm{Var}_{\pi_{(h(x,y),y)}}(Z)} \nonumber \\
=& \frac{1}{2} \frac{\mathbb{E}_{(h(x,y),y)}[(Z-x)(Z^2- x^2)]}{\mathbb{E}_{(h(x,y),y)}[(Z-x)^2] } \nonumber \\
=& \frac{1}{2} \frac{\mathbb{E}_{(h(x,y),y)}[(Z-x)^2 (Z+x)]}{\mathbb{E}_{(h(x,y),y)}[(Z-x)^2] }.  \nonumber
\end{align} 
This implies $\sup_{x \in (-1,1), y \in \mathbb{R}} \Big| \frac{\partial h(x,y)}{\partial y} \Big| \leq 1$, where we use the trivial bound $|Z+x| \leq 2$. The desired conclusion follows by plugging this back into \eqref{eq:H_stability_int}. 

\end{itemize}

\end{proof}

\begin{proof}[Proof of Lemma \ref{lemma:properties}]

\begin{itemize}
\item[(a)] Lemma \ref{lemma:elementary} Part (a)  implies that $\ddot{c}(\gamma_1, \gamma_2) = \mathrm{Var}_{\psi_{\gamma}} (Z)  > 0$ for all $\gamma_1, \gamma_2$. Thus $\dot{c}(\gamma_1, \gamma_2)$ is strictly increasing in $\gamma_1$.  For every $\gamma_2 \in \mathbb{R}$, as $\gamma_1 \to \pm \infty$, $\psi_{\gamma} \stackrel{w}{\to} \psi_{\pm \infty}$ using Lemma \ref{lemma:elementary} Part (c). The desired conclusion follows on noting that 
\begin{align}
\dot{c}(\gamma_1, \gamma_2) = \mathbb{E}_{\psi_\gamma}[Z] \stackrel{\gamma_1 \to \pm \infty}{\to } \pm 1 \nonumber 
\end{align} 
by the Dominated Convergence Theorem. 

\item[(b)] Using Part (a), we know that $\dot{c}(\gamma_1, \gamma_2)$ is strictly increasing in $\gamma_1$, and continuous. Further, if $\gamma_1 \to \pm \infty$, $\dot{c}(\gamma_1, \gamma_2) \to \pm 1$. Thus for any $t \in (-1,1)$, the existence of $h(t,\gamma_2)$ follows by the Intermediate Value Theorem. Finally, we argue that for any fixed $\gamma_2 \in \mathbb{R}$,  as $ t \to 1$,  $h(t, \gamma_2) \to \infty$. The case $ t \to -1$ is similar, and thus omitted. 

We complete the proof by contradiction. Suppose, if possible, that $M:=\lim_{ t \to 1} h(t, \gamma_2) < \infty$. In this case, as $-1$ is in the support of $\pi$, $\pi([-1, 0])>0$. This implies that 
\begin{align}
t=  \dot{c}(h(t, \gamma_2) , \gamma_2) &=   \int_{[-1,1]} z \exp\Big( h(t, \gamma_2)  z - \frac{\gamma_{2} }{2}z^2 - c(h(t,\gamma_2),\gamma_2)  \Big) \mathrm{d}\pi(z) \nonumber \\
&= \int_{[-1,0]}  z \exp\Big( h(t, \gamma_2)  z - \frac{\gamma_{2} }{2}z^2 - c(h(t,\gamma_2),\gamma_2)  \Big) \mathrm{d}\pi(z) +  \nonumber \\
& \int_{(0,1]} z \exp\Big( h(t, \gamma_2)  z - \frac{\gamma_{2} }{2}z^2 - c(h(t,\gamma_2),\gamma_2)  \Big) \mathrm{d}\pi(z) \nonumber \\
& \leq \pi_{(h(t,\gamma_2), \gamma_2)}((0,1]). \nonumber 
\end{align} 
To complete the argument, it suffices to show that $\liminf_{t \to 1} \pi_{(h(t,\gamma_2), \gamma_2)}([-1,0])>0$. But this follows on noting that the function $ t \to c( h(t,\gamma_2) , \gamma_2)$ is non-decreasing on $[0,1]$, and thus 
\begin{align}
\pi_{(h(t,\gamma_2), \gamma_2)}([-1,0]) \geq \exp( -M - \frac{\gamma_2}{2} - c(M, \gamma_2)) \pi([-1,0]). \nonumber 
\end{align} 

\end{itemize}
\end{proof}

\begin{proof}[Proof of Lemma \ref{lemma:G_stability}]
The lemma follows by direct computation. First, note that 
\begin{align}
\frac{\partial G(u,d)}{\partial u}  = h(u,d) + u \frac{\partial h}{\partial u}(u,d) - u \dot{c}(h(u,d),d) = h(u,d), \nonumber 
\end{align} 
where the last equality follows upon noting that $\dot{c}(h(u,d),d) = u$. For the second derivative, note that 
\begin{align*}
\frac{\partial G(u,d)}{\partial d}=&u\frac{\partial h(u,d)}{\partial d}-\dot{c}(h(u,d),d)\frac{\partial h(u,d)}{\partial d}-\frac{\partial }{\partial \gamma_2}c(\gamma_1,\gamma_2)\Big|_{(h(u,d),d)} + \frac{\partial }{\partial \gamma_2}c(\gamma_1,\gamma_2)\Big|_{(0,d)} \\
=&-\frac{\partial }{\partial \gamma_2}c(\gamma_1,\gamma_2)\Big|_{(h(u,d),d)} + \frac{\partial }{\partial \gamma_2}c(\gamma_1,\gamma_2)\Big|_{(0,d)}  \\
=& \frac{1}{2}\int_{[-1,1]} z^2 d\pi_{(h(u,d),d)}(z)dz - \frac{1}{2}\int_{[-1,1]} z^2 d\pi_{(0,d)}(z)dz .
\end{align*}
Finally, note that 
\begin{align}
\frac{\partial^2 G(u,d)}{\partial^2 u}  = \frac{\partial h(u,d)}{\partial u} = \frac{1}{\ddot{c}(h(u,d),d)} >0 \nonumber 
\end{align} 
where the last equality follows from differentiating the equation $\dot{c}(h(u,d),d) = u$. 
\end{proof} 

\subsection{Proof of Lemma~\ref{lem:concave}}

\begin{proof}
Since $r_p(\cdot)$ is upper semi-continuous there exists a global maximizer in $[-1,1]^p$, say $\widetilde{\bf x}_0$. Fixing any ${\bf x}$ in $(-1,1)^p$, consider the function $f(t):=r_p( (1-t) \widetilde{\bf x}_0+t {\bf x})$. Then $f$ is twice differentiable on $(0,1)$, and
$$f''(t)=(\widetilde{\bf x}_0-{\bf x})^{\mathrm{T}}H_p( (1-t) \widetilde{\bf x}_0+ t{\bf x})(\widetilde{\bf x}_0-{\bf x})\le- \eta \lVert \widetilde{\bf x}_0-{\bf x}\rVert_2^2.$$
Consequently, for any $\varepsilon\in (0,1)$ Taylor's expansion implies
$$f(1)\le f(\varepsilon)+(1-\varepsilon)f'(\varepsilon)-\frac{1}{2}\eta (1-\varepsilon)^2 \lVert \widetilde{\bf x}_0-{\bf x}\rVert_2^2.$$
Taking limits as $\varepsilon\to0$ we get
$$f(1)\le f(0)+f'(0+)-\frac{1}{2}\eta  \lVert \widetilde{\bf x}_0-{\bf x}\rVert_2^2\le  f(0)-\frac{1}{2}\eta  \lVert \widetilde{\bf x}_0-{\bf x}\rVert_2^2,$$
where the last inequality uses the fact that $f'(0+)\le 0$, as $t=0$ is a global maximizer of $f$.  The last display is equivalent to
$$r_p({\bf x})\le r_p(\widetilde{\bf x}_0)-\frac{1}{2}\eta  \lVert \widetilde{\bf x}_0-{\bf x}\rVert_2^2.$$
This inequality then extends to $x \in [-1,1]^{p}$ by upper semi-continuity of $r_p$. 
\end{proof} 

\section{Stability Estimates for functionals} 
We prove Lemma \ref{lem:rate2} and \ref{lem:map_cont} in this section. 
\subsection{Proof of Lemma \ref{lem:rate2}} 
We start with the proof of Part (i). 
Define the functional $T_{W, \phi} : \tilde{\mathscr{F}}_{2,4}  \to \mathbb{R}$ 
\begin{align*}
T_{W,\phi,\psi}(\nu) :=-\frac{1}{2} \mathbb{E}[W(X_1, X_2) U_1 U_2] + \mathbb{E}[U_1 \phi(X_1)] + \mathbb{E}[\sqrt{\psi(X_1)} U_1 Z_1] .
\end{align*}
where $(X_1, Z_1, U_1) , (X_2, Z_2, U_2) \sim \nu$ are iid. Similarly define $I_{\psi} : \tilde{\mathscr{F}}_{2,4}  \to \mathbb{R} \cup \{\infty\}$ 
\begin{align}
I_{\psi}(\nu) = \mathbb{E}\Big[G \Big(U,  \frac{\psi(X)}{\sigma^2} \Big) \Big]. \nonumber 
\end{align}

 We observe that $\tilde{\mathcal{G}}_{W,\phi,\psi}(\nu) = \frac{1}{\sigma^2} T_{W,\phi,\psi}(\nu) - I_{\psi}(\nu)$. Further,  
%
%
%
%
\begin{align}
\sup_{\nu \in \tilde{\mathscr{F}}_{2,4}  } |T_{W,\phi,\psi }( \nu) - T_{\tilde{W} , \tilde{\phi}, \psi }(\nu) |\le   \frac{1}{2}\lVert W-\tilde{W}\rVert_\square+\lVert \phi-\tilde{\phi}\rVert_1, \label{eq:a_stability} \\
\sup_{\nu \in \tilde{\mathscr{F}}_{2,4}  } |T_{W,\phi,\psi }( \nu) - T_{W , \phi, \tilde{\psi} }(\nu) | \leq 2 \| \psi - \tilde{\psi} \|_1,  \label{eq:c_stability}  \\
\sup_{\nu \in \tilde{\mathscr{F}}_{2,4}  } | I_{\psi}(\nu) - I_{\tilde{\psi}}(\nu) |\le   \frac{1}{2 \sigma^2}  \lVert \psi-\tilde{\psi}\rVert_1.\label{eq:b_stability} 
\end{align}

%


Indeed, \eqref{eq:a_stability} is immediate from the definition of $T_{W,\phi,\psi} (\cdot)$, and \eqref{eq:b_stability} follows from Lemma \ref{lemma:G_stability} on noting that $\sup_{u \in (-1,1), d \in \mathbb{R}} |\frac{\partial G}{\partial d} (u,d)| \leq \frac{1}{2}$.  Finally,  \eqref{eq:c_stability} follows on noting that by  Cauchy-Schwarz inequality, 
\begin{align}
\mathbb{E}_{\nu}\Big[ | \sqrt{\psi (X)} Z - \sqrt{\tilde{\psi}(X) } Z| \Big] &\leq  \sqrt{ \mathbb{E}_{\nu} \Big[  Z^2  \Big]     \mathbb{E}_{\nu} \Big[ \Big(   \sqrt{\psi (X)} - \sqrt{ \tilde{\psi}(X)} \Big)^{2} \Big] } \leq 2 \| \psi - \tilde{\psi}\|_1,  \nonumber 
\end{align} 
where the last inequality uses $(\sqrt{a} - \sqrt{b})^2 \leq | a -b|$ and $\mathbb{E}_{\nu}[Z^2] \leq 4$. This completes the proof of Part (i). 

Next, we turn to the proof of Part (ii). 
%
Using the definition of cut norm (as in Definition \ref{def:cut}) and  Part (i), we have,
\begin{align*}
\sup_{\nu \in \tilde{\mathscr{F}}_{2,4}  }|\tilde{\mathcal{G}}_{W_k , W_k.\phi_k+ \phi_k \psi_k, \psi_k }(\nu) -\tilde{\mathcal{G}}_{W, W.\phi_k+ \phi_k \psi, \psi } (\nu) |\lesssim   \lVert W_k-W\rVert_\square + \|\psi_k - \psi\|_1 \to 0.
\end{align*}
 Using Part (i) again, it suffices to show that 
\begin{align}
W.\phi_k+ \phi_k \psi \stackrel{L^1}{\to} W.\phi + \phi \psi. \nonumber 
\end{align}

To this end, note that $|W.\phi_k + \phi_k (x) \psi (x) -W.\phi -\phi(x) \psi(x) |$ converges to $0$ in measure, and $$|W.\phi_k(x) + \phi_k(x) \psi (x) -W.\phi(x) - \phi(x) \psi(x)|\le 2\int_{[0,1]}|W(x,y)|dy + 2 \psi,$$ which is an integrable function. This completes the argument using DCT.

\subsection{Proof of Lemma~\ref{lem:map_cont}} 

\begin{proof}[Proof of Lemma~\ref{lem:map_cont}] 
\begin{itemize}
\item[(i)] 
For any $\varepsilon>0$, let $S^+(\varepsilon) = \{x: m(\mu, W,x) - m(\mu, W',x) > \varepsilon\}$. For $X \sim U([0,1])$, we have, 
\begin{align}
\mathbb{P}(X \in S^+(\varepsilon)) &\leq \frac{1}{\varepsilon}  \mathbb{E}\Big[ (m(\mu, W, X) - m(\mu, W', X)) \mathbf{1}_{S^+(\varepsilon)}(X)  \Big] \nonumber \\
&= \frac{1}{\varepsilon} \mathbb{E}_{X, X'}\Big[ (W(X,X') - W'(X,X')) \mathbf{1}_{S^+(\varepsilon)}(X) \mathbb{E}[U'|X']  \Big] \nonumber \\
&\leq  \frac{1}{\varepsilon} \|W - W'\|_{\square} . \nonumber 
\end{align} 
Setting $S^{-}(\varepsilon) = \{ x : m(\mu, W,x) - m(\mu, W',x) < - \varepsilon \}$, the same argument now yields that 
\begin{align}
\mathbb{P}[X \in S^-(\varepsilon)] \leq \frac{1}{\varepsilon} \|W - W'\|_{\square}. \nonumber 
\end{align}

\item[(ii)] Let $(X_p, Z_p, U_p) \sim \nu_p$ and $(X,Z,U) \sim \nu$. Using Skorokhod Embedding Theorem, we assume that $(X_p, Z_p, U_p) \to (X,Z,U)$ a.s. as $p \to \infty$. Since $\int |W(x,y)| \mathrm{d}x \mathrm{d}y < \infty$, for any $\varepsilon>0$ there exists $W'$ continuous such that $\|W - W' \|_1 \leq \varepsilon$. Then we have, 
\begin{align}
\mathbb{E}[|m(\mu, W,X) - m(\mu, W', X)| ] &\leq 
 \|W- W'\|_1 \leq \varepsilon. \nonumber 
\end{align}

Also, 
\begin{align}
|m(\nu_p,W',x) - m(\nu, W', x)| &= \Big| \mathbb{E}[ W'(x,X_p) U_p ] - \mathbb{E}[ W'(x,X) U ] \Big| \nonumber \\
&\leq \Big| \mathbb{E}[ W'(x,X_p) U_p ] - \mathbb{E}[ W'(x,X_p) U ] \Big| + \Big| \mathbb{E}[ W'(x,X_p) U ] - \mathbb{E}[ W'(x,X) U ] \Big| \nonumber\\
&\leq \|W'\|_{\infty} \mathbb{E}[|U_p - U|] + \sup_{x,y,z \in [0,1],|y-z| \leq |X-X_p|} \Big| W'(x,y) - W'(x,z)\Big| =o(1) \nonumber 
\end{align} 
using the uniform continuity of $W'$. The desired conclusion follows upon combining the two displays above. 
\end{itemize} 
\end{proof} 

\section{Proofs of Examples} 
\label{sec:example_proofs} 
We establish Corollaries \ref{cor:det1}-\ref{cor:erdos1} and \ref{cor:reg2}-\ref{cor:det2} in this section. Throughout this section $o_P(1)$ terms converge to zero in probability under the marginal distribution of the design matrix $\mathbf{X}$. 
\subsection{Accuracy of mean-field approximation} 
\begin{proof}[Proof of Corollary \ref{cor:det1}]
This is immediate from Theorem \ref{thm:main}.
\end{proof}

\begin{proof}[Proof of Corollary \ref{cor:reg1}]

With $A_p$ and $D_p$ denoting the off-diagonal and diagonal parts of the matrix ${\bf X}^{\mathrm{T}}{\bf X}$, to invoke Theorem \ref{thm:main} we need to verify that $A_p$ satisfies \eqref{eq:mean_field} and \eqref{eq:weak}, and the empirical measure $\frac{1}{p}\sum_{i=1}^p D_p(i,i)$ is uniformly integrable. We verify these conditions below:

Since $\{{\bf x}_i\}_{1\le i\le n}\stackrel{i.i.d.}{\sim} N(0,\Gamma_p)$, and $${\mathbf X}^{\mathrm T}{\mathbf X}=\frac{1}{n}\sum_{i=1}^n{\bf x}_i{\bf x}_i^{\mathrm T},$$
%
 invoking \cite[Proposition 2.1]{vershynin2012close} gives 
\begin{align}\label{eq:versh}
\Big\lVert{\bf X}^{\mathrm{T}}{\bf X}-\Gamma_p\Big\rVert_2=O_P\Big(\sqrt{\frac{p}{n}}\Big)=o_P(1),
\end{align}
where the last equality uses $p=o(n)$. Noting that
\begin{align*}
\lVert D_p-\Gamma_{p,{\rm diag}}\rVert_2\le \lVert {\bf X}^{\mathrm{T}}{\bf X}-\Gamma_{p}\rVert_2
\end{align*}
gives
\begin{align}\label{eq:versh2}
\lVert A_p-\Gamma_{p,{\rm off}}\rVert_2=o_P(1).
\end{align}
Consequently,  $A_p$ satisfies \eqref{eq:mean_field} with high probability, as ${\rm tr}(\Gamma_{p,{\rm off}}^2)=o(p)$. Further, since $\lVert \Gamma_p\rVert_2=O(1)$, it follows from \eqref{eq:versh} gives that $\lVert A_p\rVert_2=O_P(1)$. Finally, noting that $$\max_{1\le i\le p}|D_p(i,i)|\le \lVert {\bf X}^{\mathrm{T}}{\bf X}\rVert_2=\lVert \Gamma_p\rVert_2+o_P(1)$$
we have $\frac{1}{p}\sum_{i=1}^p \delta_{D_p(i,i)}$ is uniformly integrable with high probability. This completes the proof of the corollary.

%
%
%
%
%
\end{proof}

\begin{proof}[Proof of Corollary \ref{cor:erdos1}]

It suffices to verify the same conditions on the matrices $(A_p,D_p)$ as in Corollary \ref{cor:reg1}.

To this end, note that for any $i\ne j$ we have
\begin{align}\label{eq:rep}
A_p(i,j)=\frac{p}{n}\sum_{k=1}^nB(k,i) B(k,j),
\end{align}
and so
\begin{align*}
\E \sum_{i\ne j} A_p(i,j)^2=&\frac{p^2}{n^2} \sum_{i\ne j} \sum_{k,\ell=1}^n \E[B(k,i) B(k,j) B(\ell,i) B(\ell,j)]\\
=&\frac{p^2}{n^2}\sum_{i\ne j} \sum_{k\ne \ell} \E[B(k,i) B(k,j) B(\ell, i) B(\ell,j)]+\frac{p^2}{n^2}\sum_{i\ne j}\sum_{k=1}^n \E[B(k,i) B(k,j)]\\
\le &\frac{p^2}{n^2}\times \frac{n^2p^2\lambda^4}{p^4}+\frac{p^2}{n^2}\times\frac{np^2\lambda^2}{p^2}=\lambda^4+\frac{\lambda^2 p^2}{n},
\end{align*}
which is $o(p)$ as $p=o(n)$. This verifies \eqref{eq:mean_field}. 

Also, \eqref{eq:rep} gives
$$\frac{1}{p}\E \sum_{i,j=1}^p |A_p(i,j)|=\frac{1}{n}\sum_{k=1}^n \sum_{i\ne j} \E[ B(k,i) B(k,j)]\le \lambda^2,$$
and so \eqref{eq:weak} holds with high probability. Finally we have
$$\E\frac{1}{p}\sum_{i=1}^p |D_p(i,i)|^{2}=\frac{1}{p}\sum_{i=1}^p\E \Big(\frac{p}{n}\sum_{k=1}^n B(k,i)\Big)^2\le \frac{p^2}{n^2}\Big[\frac{n^2 \lambda^2}{p^2}+\frac{n\lambda}{p}\Big]=\lambda^2+o(1),$$
and so $\frac{1}{p}\sum_{i=1}^p D_p(i,i)$ is uniformly integrable. 
\end{proof}

\subsection{Limiting variational formula} 
\begin{proof}[Proof of Corollary \ref{cor:reg2}]

\begin{enumerate}
\item[(a)]
The desired conclusion follows from Theorem \ref{thm:var_conv_new}, once we can verify 
$$ d_{L_1}(w_{{\bf D}},1)=o_P(1),\quad d_\square(W_{pA_p},W)=o_P(1).$$
Here $\mathbf{D} = (D_p(1,1), \cdots, D_p(p,p))$ denotes the diagonal entries of ${\mathbf X}^{\mathrm T}{\mathbf X}$, and $A_p$ denotes the off diagonal part of ${\mathbf X}^{\mathrm T}{\mathbf X}$.
Proceeding to verify the above display, invoking \eqref{eq:versh} we have $D_p(i,i) =1+\frac{1}{p}G(i/p)+o_P(1)$, and so $w_{{\bf D}}\stackrel{L_1}{\to} 1$. Also, since $\Gamma_p(i,j)=\frac{1}{p}G(i/p) G(j/p)$ for all $i\ne j$, it follows 
that $d_\square(W_{p\Gamma_p},W)\to 0$, where we use the almost sure continuity of $G(.,.)$. Since \eqref{eq:versh2} gives $d_\square(W_{pA_p},W_{p\Gamma_p})=o_P(1)$, combining we get
$$d_\square(W_{pA_p},W)\le d_\square(W_{pA_p},W_{p\Gamma_p})+d_\square(W_{p\Gamma_p},W)=o_P(1),$$
 This completes the proof of part (a).

\item[(b)]

We begin by verifying condition \eqref{eq:separate}. To this effect, 
set $$\overline{M}_p({\bf u}):=-\frac{1}{2\sigma^2}\Big[{\bf u}^{\mathrm T}\Gamma_p{\bf u}-2{\mathbf z}^{\mathrm T}{\mathbf u}\Big]-\sum_{i=1}^p G(u_i,d_i),$$ 
and use the fact that $\max_{i\in [p]}\sum_{j\ne i}\Gamma_p(i,j)\le \lambda<\sigma^2$ along with part (a) of Lemma \ref{lemma:pure_state} to get the existence of ${\mathbf u}_p^*$  such that
\begin{align*}
\overline{M}_p({\bf u}_p^*)-\overline{M}_p({\bf u})\ge \lambda \lVert {\bf u}_p^*-{\bf u}\rVert_2^2.
\end{align*}
This in turn shows that for any $\varepsilon>0$ we have
\begin{align}\label{eq:separate_2}
\limsup_{p\to\infty} \sup_{{\bf u}:\lVert {\bf u}-{\bf u}_p^*\rVert_2^2>p\delta}\frac{1}{p}\Big\{\overline{M}_p({\bf u}_p^*)-\overline{M}_p({\bf u})\Big\}<0.
\end{align}
Using \eqref{eq:versh} gives
\begin{align}\label{eq:separate_3}
\frac{1}{p}\sup_{{\bf u}\in [-1,1]^p}\frac{1}{p}\Big[\overline{M}_p({\bf u})-\widetilde{M}_p({\bf u})\Big]=o_P(1).
\end{align}
Given \eqref{eq:separate_2}, and \eqref{eq:separate_3}, it follows that
\begin{align*}
\limsup_{p\to\infty} \sup_{{\bf u}:\lVert {\bf u}-{\bf u}_p^*\rVert_2^2>p\delta}\frac{1}{p}\Big\{{M}_p({\bf u}_p^*-{M}_p({\bf u})\}<0.
\end{align*}
Thus we have verified \eqref{eq:separate}. The desired conclusion then follows from Corollary \ref{cor:eg}.


Part (b)(ii) follows from part (b)(ii) of Lemma \ref{lemma:pure_state} and Corollary \ref{cor:eg}.

%
\end{enumerate}
\end{proof}

\begin{proof}[Proof of Corollary \ref{cor:erdos2}]
\begin{enumerate}
\item[(a)]

The desired conclusion follows from Theorem \ref{thm:var_conv_new}, once we can verify 
$$ d_{L_1}(w_{{\bf D}},\psi)=o_P(1),\quad d_\square(W_{pA_p},W)=o_P(1).$$
Proceeding to verify the above display, note that
$$D_p(i,i):=({\mathbf X}^{\mathrm T}{\mathbf X})_{ii}=\frac{p}{n\sigma^2}\sum_{k=1}^nB_{ki}.$$
Thus, setting $\widetilde{D}_i:=\frac{1}{n}\sum_{k=1}^n G(k/n,i/p)$, Using Chernoff bounds it follows that
$$\P\left(\max_{1\le i\le p}|D_p(i,i)-\widetilde{D}_i|>\delta\right)\le p e^{-c\delta^2 \frac{n}{p}}\le p e^{-c\delta^2 \sqrt{p}}\to 0,$$
and so it follows that
$$d_{L_1}(w_{{\bf D}},\psi)\le d_{L_1}(w_{{\bf D}},  w_{\widetilde{D},p})+d_{L_1}(w_{\widetilde{D},p},\psi)=o_P(1)$$
where the last equality uses the almost sure continuity of $G(.,.)$.
\\

It thus suffices to show that
$$d_\square(W_{pA_p}, W)\to0.$$
To this end, note that for any $i\ne j$ we have
$$A_p(i,j)=\frac{p}{n}\sum_{k=1}^nB(k,i) B(k,j).$$
Using Lemma \ref{lemma:concentration}, 
$$\P\Big(\max_{S,T\subseteq [p]}\Big|\frac{\sum_{i\in S, j\in T}pA_p(i,j)}{p^2}-\frac{\sum_{i\in S, j\in T}p\widetilde{A}_p(i,j)}{p^2} \Big|>\delta \Big)\le 2^{p} e^{-C  \sqrt{n \delta }}.$$
From this, using the condition $p=o(\sqrt{n})$  gives
$$d_\square(W_{pA_p},W_{p\widetilde{A}_p})=o_P(1),$$
which in turn gives
$$d_\square(W_{p{A}_p},W)\le d_\square(W_{pA_p},W_{p\widetilde{A}_p})+d_\square(W_{p\widetilde{A}_p},W)=o_P(1),$$
where the last equality again uses the almost sure continuity of $G(\dot,\dot)$.

\item[(b)]
(i)
Once again, the desired conclusion of part (b) follows from Corollary \ref{cor:eg}, once we verify condition \eqref{eq:separate}.

To this effect, fixing $\delta>0$ and setting $R_i:=\sum_{j\ne i}^p A_p(i,j)$, define a $p\times p$ matrix $B_{p,\delta}$ by setting
$$B_{p,\delta}(i,j):=A_p(i,j) 1\{R(i)\le \sigma^2(1-\delta), R(j)\le \sigma^2(1-\delta)\},$$
and note that
\begin{align*}
\sup_{{\bf u}\in [-1,1]^p} \Big|{\bf u}'A_p{\bf u}_p-{\bf u}'B_{p,\delta}{\bf u}\Big|\le \frac{2}{p}\sum_{i=1}^p R_i 1\{R_i>\sigma^2(1-\delta)\}.
\end{align*}
We now claim that there exists $\delta>0$ such that 
\begin{align}\label{eq:claim_2021}
\limsup_{p\to\infty} \frac{1}{p}\sum_{i=1}^p R_i 1\{R_i>\sigma^2(1-\delta)\}=0.
\end{align}
Given \eqref{eq:claim_2021}, it suffices to show that \eqref{eq:separate} holds for $\overline{M}_p:[-1,1]^p\mapsto \R$ defined by
$$\overline{M}_p({\bf u}):=-\frac{1}{2\sigma^2}\Big[{\bf u}^{\mathrm T}B_{p,\delta}{\bf u}-2{\mathbf z}^{\mathrm T}{\mathbf u}\Big]-\sum_{i=1}^p G(u_i,d_i).$$
But this is immediate from Lemma \ref{lemma:pure_state} part (a), on noting that
$$\max_{i\in [p]}\sum_{j\ne i}B_{p,\delta}(i,j)\le \sigma^2(1-\delta).$$
It thus remains to verify \eqref{eq:claim_2021}. To this effect, recall from part (a) that $W_{pA_p}$ converges to $W$ in the cut metric, which in turn implies $\frac{1}{p}\sum_{i=1}^p \delta_{R_i}$ converges weakly in probability to the law of $S(X)$, 
where $X\sim U[0,1]$ \cite[Theorem 2.16]{borgs2015consistent}. 
This gives$$\limsup_{p\to \infty} \frac{1}{p}\sum_{i=1}^p R_i 1\{R_i>\sigma^2(1-\delta)\}\le \E S(X) 1\{S(X)\ge \sigma^2(1-\delta)\}.$$
It thus suffices to show that there exists $\delta>0$ such that the RHS above is $0$. But this follows on noting that ${\rm ess sup}(S(X))<\sigma^2$. This completes the proof of part (i).

Part (b)(ii) follows from part (b)(ii) of Lemma \ref{lemma:pure_state} and Corollary \ref{cor:eg}, as before.

\end{enumerate}

\end{proof}

\begin{proof}[Proof of Corollary \ref{cor:det2}]
\begin{enumerate}
\item[(a)]

A direct calculation gives ${\mathbf X}^{\mathrm T}{\mathbf X}=\frac{1}{2}{\bf I}+A_p$, where 
\begin{align*}
A_p(i,j)=&\frac{1}{p}\text{ if }i\le \frac{p}{2}, j>\frac{p}{2} \text{ or }i>\frac{p}{2}, j\le \frac{p}{2},\\
=&0\text{ otherwise }.
\end{align*}
It then follows that $d_\square(W_{pA_p},W)\to  W$, and $d_{L_1}(w_{\bf D}, \psi)\to 0$. The desired conclusion then follows from Theorem \ref{thm:var_conv_new} as before.

\item[(b)]
Since $\limsup\limits_{p\to\infty}\max_{i\in [p]}\sum_{j\ne i}|A_p(i,j)|=\frac{1}{2}$, the result is immediate from Corollary \ref{cor:eg} and Lemma \ref{lemma:pure_state}.

\end{enumerate}
\end{proof}

\section{Relevant concentration inequalities} 
%
%

\begin{lemma}
\label{lemma:concentration}
Let $B_{ik} \sim \mathrm{Ber}(G(i/n, j/p)/p)$ are independent random variables with $G(x,y) \leq \lambda$. For any $\delta>0$, there exists $C>0$ (depending on $\delta$) such that 
\begin{align}
\mathbb{P}\Big(\max_{S,T \subset [p]} \Big| \sum_{k \in S, l \in T} (A_p(k, l) - \mathbb{E}[A_p(k,l)] ) \Big|>  p\delta  \Big)  \leq 2^p \exp\Big(- C \sqrt{ n  }  \Big). \nonumber
\end{align} 
\end{lemma}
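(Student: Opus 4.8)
\textbf{Proof proposal for Lemma~\ref{lemma:concentration}.}

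The plan is to combine a union bound over the $2^p \cdot 2^p$ choices of $(S,T)$ with a Bernstein-type concentration inequality for the linear statistic $\sum_{k\in S, l\in T} A_p(k,l)$ around its mean, where we recall that $A_p(k,l) = \frac{p}{n}\sum_{m=1}^n B(m,k)B(m,l)$. Fix $S,T\subset[p]$ and write
\[
Y_{S,T} := \sum_{k\in S, l\in T} A_p(k,l) = \frac{p}{n}\sum_{m=1}^n \Big(\sum_{k\in S} B(m,k)\Big)\Big(\sum_{l\in T}B(m,l)\Big) =: \frac{p}{n}\sum_{m=1}^n \Xi_m,
\]
so that $Y_{S,T}$ is a sum of the $n$ independent nonnegative random variables $\frac{p}{n}\Xi_m$. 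The key point is that each $\Xi_m$ is a product of two (dependent, when $S\cap T\neq\emptyset$) sums of Bernoullis with small means; since $G(x,y)\le \lambda$, we have $\mathbb{E}[\sum_{k\in S}B(m,k)]\le |S|\lambda/p \le \lambda$, and one controls the moments of $\Xi_m$ by Cauchy--Schwarz together with the standard bound $\mathbb{E}[(\sum_{k\in S} B(m,k))^r] \le C_r(\lambda^r + \lambda)$ valid for sums of independent Bernoullis with total mean $O(1)$. This gives a uniform (in $S,T$) bound $\mathbb{E}[\Xi_m^r]\le C_r$, hence a sub-exponential tail for $\Xi_m$ with an absolute Orlicz norm, and therefore each summand $\frac{p}{n}\Xi_m$ has sub-exponential norm $\lesssim \frac{p}{n}$.

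Next I would apply Bernstein's inequality for sums of independent sub-exponential random variables: for $Y_{S,T}-\mathbb{E}[Y_{S,T}] = \sum_{m=1}^n \frac{p}{n}(\Xi_m-\mathbb{E}\Xi_m)$,
\[
\mathbb{P}\Big(\big|Y_{S,T}-\mathbb{E}[Y_{S,T}]\big| > p\delta\Big) \le 2\exp\Big(-c\min\Big\{\frac{p^2\delta^2}{n\cdot (p/n)^2},\ \frac{p\delta}{p/n}\Big\}\Big) = 2\exp\Big(-c\min\{n\delta^2, n\delta\}\Big),
\]
which for fixed $\delta>0$ is at most $2\exp(-c'\, n)$. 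Since $n = \tilde p^2 \gg p$ in the intended application (indeed the statement only needs $n$ large relative to $p$), the bound $2\exp(-c'n) \le 2\exp(-C\sqrt n)$ holds, and a union bound over all $2^p\cdot 2^p = 4^p$ pairs $(S,T)$ gives
\[
\mathbb{P}\Big(\max_{S,T\subset[p]}\big|Y_{S,T}-\mathbb{E}[Y_{S,T}]\big| > p\delta\Big) \le 4^p\cdot 2\exp(-C\sqrt n),
\]
which can be absorbed into $2^p\exp(-C'\sqrt n)$ after adjusting constants (using that $p = o(\sqrt n)$, or more crudely that $4^p e^{-C\sqrt n}\le 2^p e^{-C\sqrt n /2}$ for $n$ large).

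The main obstacle — and the only genuinely nontrivial step — is establishing the uniform moment/sub-exponential bound on $\Xi_m = (\sum_{k\in S}B(m,k))(\sum_{l\in T}B(m,l))$ that does not degrade with $|S|,|T|$. The clean way is: condition on nothing, note $\sum_{k\in S}B(m,k)$ is $\mathrm{Bin}$-like with mean $\mu_S\le\lambda$, so its moment generating function satisfies $\mathbb{E}[e^{t\sum_{k\in S}B(m,k)}] \le \exp(\mu_S(e^t-1)) \le \exp(\lambda(e^t-1))$, i.e. it is stochastically dominated (in the relevant MGF sense) by a $\mathrm{Poisson}(\lambda)$; then $\Xi_m\le \frac12((\sum_S B)^2 + (\sum_T B)^2)$, and the square of a variable with Poisson-type MGF has a sub-exponential tail with norm depending only on $\lambda$. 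Carrying out this domination carefully, and then feeding it into the version of Bernstein's inequality for sub-exponential variables (e.g. Vershynin, \emph{High-Dimensional Probability}, Thm.~2.8.1), completes the argument; the remaining computations are routine bookkeeping of absolute constants.
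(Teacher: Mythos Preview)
Your argument has a genuine gap at the step where you claim $\Xi_m$ is sub-exponential. Writing $\Xi_m \le \tfrac12\big((\sum_{k\in S}B(m,k))^2 + (\sum_{l\in T}B(m,l))^2\big)$, each term is the \emph{square} of a Poisson-dominated variable. A Poisson-type variable $Y$ with mean $\le\lambda$ is indeed sub-exponential, but its square is not: $\mathbb{P}(Y^2>t)=\mathbb{P}(Y>\sqrt t)$ decays only like $\exp(-c\sqrt t\log t)$, so $Y^2$ is merely sub-Weibull with parameter $1/2$. Equivalently, the moment growth is $\mathbb{E}[\Xi_m^r]^{1/r}\asymp r^2$ rather than $\asymp r$; your passage from ``$\mathbb{E}[\Xi_m^r]\le C_r$ uniformly'' to ``sub-exponential'' is precisely where this fails, since it requires $C_r\lesssim (Cr)^r$. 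Bernstein's inequality for sub-exponential summands is therefore not available, and your claimed bound $\exp(-c\min\{n\delta^2,n\delta\})$ is unjustified. The correct concentration for sums of independent sub-Weibull($1/2$) variables gives only $\exp(-C\sqrt{n})$ in the heavy-tail regime---which is exactly the rate stated in the lemma, and the reason $\sqrt n$ rather than $n$ appears in the exponent.

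The paper follows the same broad outline but handles both points differently. First, it reduces the bilinear case $(S,T)$ to the diagonal case $S=T$ via an inclusion--exclusion identity expressing $\sum_{k\in S,l\in T}\varepsilon_{kl}$ as a signed combination of diagonal sums over $S\cup T$, $S\setminus T$, $T\setminus S$, $S\cap T$; this yields the stated $2^p$ prefactor directly rather than $4^p$. (Your absorption of $4^p$ into $2^p e^{-C'\sqrt n}$ via $p=o(\sqrt n)$ is acceptable for the downstream application, so this is minor.) Second, and crucially, for the diagonal sum $\sum_{k,l\in S}A_p(k,l)=\tfrac{p}{n}\sum_m Y_m(S)^2$ with $Y_m(S)=\sum_{k\in S}B(m,k)$, the paper invokes a concentration inequality for sums of \emph{squared} sub-exponentials (equivalently, sub-Weibull($1/2$) summands), which yields the $\exp(-C\sqrt n)$ bound. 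Replacing your Bernstein step with such a sub-Weibull inequality (e.g.\ Kuchibhotla--Chakrabortty) repairs your argument.
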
 

To prove Lemma \ref{lemma:concentration}, we first obsere that if $X$ is sub-exponential, then $X^2$ is sub-Weibull \cite{kuchibhotla2018moving}. 

\begin{lemma}
\label{lemma:orlicz} 
Let $Y_1, \cdots, Y_n$ be independent sub-exponential random variables with common sub-exponential parameter $c>0$.  For any $\delta>0$, there exists $C:=C(\delta)>0$ such that 
\begin{align}
\mathbb{P}\Big[ \Big| \sum_{i=1}^{n}  (Y_i^2 - \mathbb{E}[Y_i^2]) \Big| > n \delta \Big] \leq \exp\Big(- C \sqrt{n} \Big). \nonumber 
\end{align} 
\end{lemma}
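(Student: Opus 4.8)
The plan is to reduce the claim to a standard concentration inequality for sums of independent sub-Weibull (equivalently, sub-exponential-squared) random variables. First I would record the elementary fact that if $Y$ is sub-exponential with parameter $c$, then $Y^2$ is sub-Weibull of order $1/2$; concretely, there is a constant $c' = c'(c)$ such that $\mathbb{E} \exp\big( (|Y^2|/c')^{1/2}\big) \le 2$, which follows immediately by writing $(|Y^2|/c')^{1/2} = |Y|/\sqrt{c'}$ and using the sub-exponential Orlicz bound on $Y$. This places the centered variables $W_i := Y_i^2 - \mathbb{E}[Y_i^2]$ in the sub-Weibull$(1/2)$ class with a uniform Orlicz constant, since centering changes the Orlicz norm only by a universal multiplicative factor.

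Next I would invoke a Bernstein-type tail bound for sums of independent centered sub-Weibull$(\alpha)$ random variables with $\alpha = 1/2$ — for instance the deviation inequality from \cite{kuchibhotla2018moving} (or the general result of Merlev\`ede--Peligrad--Rio / Hao--Abbasi type bounds). Such a bound gives, for a sum of $n$ such variables with common Orlicz constant $K$,
\begin{align}
\mathbb{P}\Big[ \Big| \sum_{i=1}^n W_i \Big| > t \Big] \le 2 \exp\Big( - c_1 \min\Big\{ \frac{t^2}{n K^2}, \Big(\frac{t}{K}\Big)^{1/2} \Big\} \Big) \nonumber
\end{align}
for an absolute constant $c_1>0$. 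Setting $t = n\delta$, the first term in the minimum is of order $n\delta^2/K^2$ and the second is of order $(n\delta/K)^{1/2} = \sqrt{n}\cdot \sqrt{\delta/K}$; for all $n$ large the second term dominates the exponent's growth rate (indeed the first term is linear in $n$, but that only makes the bound stronger), so in either case the exponent is at least $C(\delta)\sqrt{n}$ for a suitable $C(\delta)>0$. Absorbing the factor $2$ into the constant (valid for $n$ large, and trivially adjustable for small $n$) yields the stated bound $\exp(-C\sqrt{n})$.

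The only mild subtlety — and the step I would be most careful about — is making sure the sub-Weibull Orlicz constant $K$ is genuinely uniform in $i$ and independent of $n$: this is where the hypothesis of a \emph{common} sub-exponential parameter $c$ is used, and it guarantees that both the passage $Y_i \mapsto Y_i^2$ and the centering step produce constants depending only on $c$. Everything else is a direct citation of an off-the-shelf concentration bound, so no lengthy computation is needed. (I note that Lemma \ref{lemma:concentration} then follows by a union bound over the $\le 2^{2p}$ choices of $(S,T)$ applied to the representation $A_p(k,l) = \frac{p}{n}\sum_{m} B(m,k)B(m,l)$, after expressing $\sum_{k\in S, l\in T} A_p(k,l)$ as $\frac{p}{n}\sum_m \big(\sum_{k\in S} B(m,k)\big)\big(\sum_{l \in T} B(m,l)\big)$ and bounding each summand by a sub-exponential-squared quantity; this is the intended downstream use of the present lemma.)
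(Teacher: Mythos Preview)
Your proposal is correct and follows essentially the same approach as the paper: both establish that $Y_i^2$ is sub-Weibull of order $1/2$ and then invoke the concentration inequality of \cite{kuchibhotla2018moving} for sums of centered sub-Weibull variables, specializing to $t = n\delta$. Your route to the sub-Weibull property via the Orlicz-norm identity $(|Y^2|/c')^{1/2} = |Y|/\sqrt{c'}$ is in fact slightly cleaner than the paper's direct tail-probability calculation, but the overall structure is identical.
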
 

\begin{proof}
We first claim that $Y_i^2$ are sub-Weibull, i.e., there exists a constant $C'>0$ (depending only on $c$) such that 
\begin{align}
\mathbb{P}[| Y_i^2 - \mathbb{E}[Y_i^2] | >   t] \leq 2 \exp(- C'\sqrt{t}). \label{eq:sub_weibull} 
\end{align}  
We establish the upper tail deviation bound. A similar argument works for the lower tail, and is thus omitted. As the variables $Y_i$ have a common sub-exponential constant, $\max_{i\leq n} \mathbb{E}[Y_i^2]$ is uniformly bounded in $n$. 
For any $t>0$ with $\sqrt{t}  \geq (\sqrt{2}-1)^2 \max_{i \leq n} \mathbb{E}^2[Y_i]$, 
\begin{align}
\mathbb{P}[Y_i^2 > \mathbb{E}[Y_i^2] + t] &= \mathbb{P}[Y_i - \mathbb{E}[Y_i]  > \sqrt{t + \mathbb{E}[Y_i^2]} - \mathbb{E}[Y_i]] \leq \mathbb{P}[Y_i - \mathbb{E}[Y_i]  > \sqrt{t + \mathbb{E}^2[Y_i]} - \mathbb{E}[Y_i]]. \nonumber
\end{align} 
We now claim that 
\begin{align}
\sqrt{t + \mathbb{E}^2[Y_i]} - \mathbb{E}[Y_i] > \frac{\sqrt{t} - (\sqrt{2} -1) \mathbb{E}[Y_i]}{\sqrt{2} }. \label{eq:conc_int1} 
\end{align} 
Using \eqref{eq:conc_int1}, along with the deviation bound above, we have, 
\begin{align}
\mathbb{P}[Y_i^2 > \mathbb{E}[Y_i^2] + t] \leq \mathbb{P}\Big[Y_i - \mathbb{E}[Y_i]  > \frac{\sqrt{t} - (\sqrt{2}-1) \mathbb{E}[Y_i]}{\sqrt{2}}  \Big] \leq \exp(-C' \sqrt{t}), \nonumber 
\end{align}
for some constant $C'>0$. The last inequality uses that $Y_i$ is sub-exponential.  \eqref{eq:conc_int1} can be verified by direct computation. 

Using \eqref{eq:sub_weibull} and \cite[Proposition A.3]{kuchibhotla2018moving}, we have, $ \max_{i \leq n} \|Y_i^2\|_{\psi_{1/2}}$ is uniformly bounded in $n$. Consequently, using \cite[Theorem 3.1]{kuchibhotla2018moving}, for any $t>0$, we have, 
\begin{align}
\mathbb{P}\Big[ \Big| \sum_{i=1}^{n} (Y_i^2 - \mathbb{E}[Y_i^2]) \Big|  > C_1 \|b \|_2 ( \sqrt{t} + L_n t^2)  \Big] \leq \exp(- t),  \nonumber
\end{align} 
where $C_1$ is a constant free of $n$, and $L_n = C_2 \|b\|_{\infty}/\| b\|_2$ for some constant $C_2>0$ independent of $n$. Here, $b = (\|Y_1^2\|_{\psi_{1/2}} ,\cdots, \|Y_n^2\|_{\psi_{1/2}})$. We set $C_1 \|b\|_2 (\sqrt{t} + L_n t^2) = n \delta$ for some $\varepsilon >0$. Direct calculation yields that $t(\delta) = \sqrt{\frac{n\delta}{C_1C_2 \|b\|_{\infty} }} (1+ o(1))$, so that 
\begin{align}
\mathbb{P}\Big[ \Big| \sum_{i=1}^{n} (Y_i^2 - \mathbb{E}[Y_i^2]) \Big| > n \delta \Big]
 \leq \exp\Big(- C_3\sqrt{n} \Big), \nonumber
\end{align} 
where $C_3  >0$ depends on $\delta$. This concludes the proof. 
\end{proof}

\begin{proof}[Proof of Lemma \ref{lemma:concentration}]
 Recall that $B_{ik} \sim \mathrm{Ber}(G(i/n, j/p)/p)$ are independent random variables. For $S \subseteq [p]$, define $Y_i(S) = \sum_{k \in S} B_{ik}$. By Chernoff bound, the collection $\{ Y_i (S): 1\leq i \leq n, S \subseteq p\}$ are sub-exponential with a common sub-exponential parameter (depending only on $\lambda$).

 By Chernoff bounds, for any fixed $S \subseteq [p]$, $Y_i= \sum_{k \in S} B_{ik}$ is a sub-exponential random variable. Using Lemma \ref{lemma:orlicz}, 
 \begin{align}
\mathbb{P}\Big( \Big| \sum_{k, l \in S} (A_p(k,l) - \mathbb{E}[A_p(k,l)]) \Big|> p\delta  \Big) = \mathbb{P}\Big( \Big|\sum_{i=1}^{n} (Y_i(S)^2 - \mathbb{E}[Y_i(S)^2])  \Big| > n \delta  \Big) \leq \exp\Big(- C \sqrt{ n  }  \Big). \nonumber  
\end{align} 
A union bound then concludes 
 \begin{align}
\mathbb{P}\Big( \max_{S \subseteq [p]} \Big| \sum_{k, l \in S} (A_p(k,l) - \mathbb{E}[A_p(k,l)] ) \Big|> p\delta  \Big) \leq 2^p \exp\Big(- C \sqrt{ n } \Big). \label{eq:deviation1}  
\end{align}
We set $\varepsilon_{kl} := A_p(k,l) - \mathbb{E}[A_p(k,l)]$ and claim that 
\begin{align}
\max_{S,T \subseteq [p]}  |\sum_{k \in S, l \in T} \varepsilon_{kl} |  \leq \frac{5}{2}   \max_{S \subseteq [p]}  |\sum_{k,l  \in S} \varepsilon_{kl} |  \label{eq:inclusion1}  
\end{align} 
 The required conclusion follows from \eqref{eq:inclusion1} and the deviation bound \eqref{eq:deviation1}. It thus remains to prove \eqref{eq:inclusion1}. To this effect, note that 
\begin{align}
\sum_{k \in S, l \in T} \varepsilon_{kl}  &= \sum_{k \in S \backslash T, l \in T \backslash S} \varepsilon_{kl} + \sum_{k \in S \backslash T, l \in S \cap T} \varepsilon_{kl} + \sum_{k \in S \cap T, l \in  T \backslash S}\varepsilon_{kl} + \sum_{k \in S \cap T, l \in  S \cap T }\varepsilon_{kl}. \nonumber \\
\sum_{k,l \in S \cup T } \varepsilon_{kl}  &= \sum_{k,l \in S \backslash T} \varepsilon_{kl}  +  \sum_{k,l \in T \backslash S } \varepsilon_{kl}  + \sum_{k, l \in S \cap T  } \varepsilon_{kl} + 2 \Big(  \sum_{k \in S \backslash T, l \in T \backslash S} \varepsilon_{kl} + \sum_{k \in S \backslash T, l \in S \cap T} \varepsilon_{kl} + \sum_{k \in S \cap T, l \in  T \backslash S}\varepsilon_{kl}\Big). \nonumber 
\end{align} 
Thus 
\begin{align} 
\sum_{k \in S, l \in T} \varepsilon_{kl} = \frac{1}{2} \Big[  \sum_{k,l \in S \cup T } \varepsilon_{kl} - \Big( \sum_{k,l \in S \backslash T} \varepsilon_{kl}  +  \sum_{k,l \in T \backslash S } \varepsilon_{kl}  + \sum_{k, l \in S \cap T  } \varepsilon_{kl} \Big)  \Big] + \sum_{k \in S \cap T, l \in  S \cap T }\varepsilon_{kl} \nonumber 
\end{align} 
which on using triangle inequality gives \eqref{eq:inclusion1}. 

\end{proof}

\end{document}